\def\s#1{\mbox{\boldmath $#1$}}
\newtheorem{theorem}{Theorem}
\newtheorem{lemma}[theorem]{Lemma}
\newtheorem{corollary}[theorem]{Corollary}
\newtheorem{definition}[theorem]{Definition}
\newtheorem{claim}[theorem]{Claim}
\def\qed{\hfill $\Box$\\ }
\def\s#1{\mbox{\boldmath $#1$}}
\def\ds{double square}
\def\bds{balanced double square}
\def\fds{factorizable double square}
\def\fsds{FS-double square}
\def\FrSi{Fraenkel and Simpson}
\def\CrRy{Crochemore and Rytter}
\def\sbig#1{{\scalebox{0.9}{$\mathbb{s}$}}(#1)}
\def\send#1{{\scalebox{0.9}{$\mathbb{e}$}}(#1)}
\def\+{{+}}
\def\-{{-}}
\def\o#1{\overline{#1}}
\def\c#1{\widetilde #1}
\def\p#1{{\scalebox{0.7}{$\,{\cal #1}(1)$}}}
\def\q#1{{\scalebox{0.7}{$\,{\cal #1}(2)$}}}
\def\lcp#1{$lcp({#1}_1$,$\c{#1}_1)$}
\def\lcs#1{$lcs({#1}_1$,$\c{#1}_1)$}
\def\inv#1{\o{#1}_2{#1}_2{#1}_2\o{#1}_2}
\def\invf{inversion factor}
\def\amate{$\alpha$-mate}
\def\bmate{$\beta$-mate}
\def\gmate{$\gamma$-mate}
\def\dmate{$\delta$-mate}
\def\emate{$\varepsilon$-mate}
\def\eemate{super-$\varepsilon$-mate}
\def\np#1{\!\raisebox{1.5pt}{$^#1$}}
\def\aseg{$\alpha$-segment}
\def\bseg{$\beta$-segment}
\def\gseg{$\gamma$-segment}
\def\afam{$\alpha$-family}
\def\bfam{$(\alpha\+\beta)$-family}
\def\gfam{$(\alpha\+\beta\+\gamma)$-family}
\newenvironment{my_itemize}{
\begin{itemize}
  \setlength{\itemsep}{1pt}
  \setlength{\parskip}{0pt}
  \setlength{\parsep}{0pt}}{\end{itemize}
}
\newenvironment{my_enumerate}{
\begin{enumerate}
  \setlength{\itemsep}{1pt}
  \setlength{\parskip}{0pt}
  \setlength{\parsep}{0pt}}{\end{enumerate}
}
\journal{Journal of Discrete Applied Mathematics}
\begin{document}

\begin{frontmatter}
\title{How many double squares can a string contain?}
\author{Antoine Deza}
\ead{deza@mcmaster.ca}

\author{Frantisek Franek}
\ead{franek@mcmaster.ca}

\author{Adrien Thierry}
\ead{thierraa@mcmaster.ca}

\address{
Advanced Optimization Laboratory \\
Department of Computing and Software\\
McMaster University, Hamilton, Ontario, Canada}

\begin{abstract}
\noindent
Counting the types of squares rather than their occurrences, we consider the problem of bounding the number of distinct squares in a string.
\FrSi\ showed in 1998  that a string of length $n$ contains at most $2n$ distinct squares. Ilie presented in 2007  an asymptotic 
upper bound of  $2n\-\Theta(\log\ n)$. We show that a string of length $n$ contains at most $\lfloor11n/6\rfloor$ distinct squares. 
This new upper bound is obtained by investigating the combinatorial 
structure of \ds{s} and showing 
that a string of length $n$  contains at most $\lfloor5n/6\rfloor$  
particular double squares. In addition, 
the established structural properties provide a novel  proof of \FrSi's result.
\end{abstract}

\begin{keyword}
string\sep square\sep primitively rooted square\sep number of distinct squares\sep \ds\sep
\bds\sep \fds\sep \fsds
\end{keyword}

\end{frontmatter}

\section{Introduction}
A square in a string is a tandem repetition of the form $u^2 = uu$.
The repeating part, $u$, is referred to as the \emph{generator} of
the square $u^2$. If the generator $u$ is \emph{primitive}, i.e. not
a repetition of a string, then the square is called \emph{primitively rooted}.
The problem of counting the types of squares in a string of length $n$ -- later referred to as \emph{the number of distinct squares problem} --
was introduced by \FrSi~\cite{FS98} in 1998 who showed that the number of distinct squares in a string of length $n$ is at most $2n$. 
Their proof relies on a lemma by \CrRy~\cite{CR95}  describing the relationship among the sizes of three primitively rooted squares starting
at the same position. Not using \CrRy's Lemma, Ilie~\cite{I05}  provided an alternative proof  of \FrSi's result
before presenting in~\cite{I07} an asymptotic upper bound of $2n\-\Theta(\log\ n)$ for sufficiently large $n$.
A $d$-step approach to this problem introducing the size $d$ of the alphabet as a parameter in
addition to the length $n$ of the string was proposed in~\cite{DFxx}. 
Considering the maximum number $\sigma_d(n) $ of distinct primitively rooted squares over all strings of length $n$ with exactly $d$ distinct symbols, it is conjectured there that $\sigma_d(n) \leq n\-d$.  
Note that the number of non-primitively rooted squares, i.e. squares whose generators are repetitions, is bounded by  $\lfloor n/2 \rfloor\-1$, see Kubica et al.~\cite{KRRW12}. 

A configuration of two squares $u^2$ and $U^2$ starting at the same position and so that $|u|<|U|<2|u|<2|U|$ has been investigated in different contexts. For instance,  the configuration
of such two squares with a third one is investigated in~\cite{FFSS12,KS12} with the intention of  providing a position where a third square could not start in order to tackle the maximum number of runs conjecture. 
Within the computational framework introduced in~\cite{PSC2012-11}, such configurations are investigated in~\cite{L13} to enhance the determination of $\sigma_d(n)$. Such configurations of two squares are unique in the context of rightmost occurrences of squares since at most two such squares can start at the same position as shown by  \FrSi. In~\cite{L} Lam investigates 
what he calss \ds{s}, i.e. configurations of two rigthmost occurences of squares starting at the same position, in order to bound their number and thus bound the number of distinct squares.

We  present structural properties of \ds{s} arising in various contexts and coinciding with Lam's \ds{s} in the context
of rightmost occurrences which we refer to as \fsds{s}. The structural properties of \ds{s}
presented in this paper not only give a novel proof of \FrSi's result, they allow bounding the number of \fsds{s} in a string of length $n$ by $\lfloor 5n/6\rfloor$, which in turn leads
to a new upper bound for the number of distinct squares
of $\lfloor11n/6\rfloor$.

\section{Combinatorics of double squares}
\subsection{Preliminaries}

\medskip
\noindent
We deal with finite strings over finite alphabets and index strings starting from 1. Thus $x[1]$ refers to the first symbol of a string $x$, $x[2]$ to the second etc.
We use $..$ as a range symbol, thus $x=x[1..n]$ is a string of length $n$, and $x[i..j]$ refers to the substring, also often called  \emph{factor}, starting at position $i$ and ending at position $j$. 
For a substring $y=x[i..j]$, $\sbig{y}$ respective $\send{y}$ denotes its starting, respective ending, position, i.e. $(\sbig{y},\send{y})=(i,j)$.
A substring $y=x[i..j]$ of  $x=x[1..n]$ is called a \emph{prefix} respective \emph{suffix} of $x$ if $i=1$ respective $j=n$, and is \emph{proper} if $y\neq x$, while we call it \emph{trivial} if $y$ is empty.
For a string $x$, a \emph{non-trivial power} of $x$ is a string $x^m$ for some integer $m\geq 2$, where $x^m$ represents a concatenation of $m$ copies of
$x$. In particular, $x^2$ is called a \emph{square}, and $x^3$ a \emph{cube}. 
% We use $\varepsilon$ to denote an \emph{empty string}, i.e. a string of length 0.

\begin{definition}
A string $x$ is \emph{primitive} if  $x$ cannot be expressed as a non-trivial power of any string.
For any string $x$, there is a primitive string $y$ so that $x=y^m$ for some integer $m\geq 1$. Such $y$ and $m$ are unique and $y$ is called the \emph{primitive root} of $x$.
Two strings $x$ and $y$ are \emph{conjugates} if there are strings $u$ and $v$ so that $x=uv$ and $y=vu$. Note that $x$ is a trivial conjugate of
itself. Often the term \emph{rotation} is used for conjugates.
\end{definition}

Lemmas~\ref{syncpr} and~\ref{comfactor}  are folklore and presented without
proofs. 

\begin{lemma}[Synchronization principle lemma]\label{syncpr}
Given a primitive string $x$, a proper suffix $y$ of $x$, a proper prefix $z$ of $x$, and $m\geq 0$, 
there are exactly $m$ occurrences of $x$ in $yx{^m}z$.
\end{lemma}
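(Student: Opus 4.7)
The plan is to prove both directions of the count separately. For the lower bound I would simply point to the $m$ canonical occurrences of $x$ at positions $|y|+k|x|+1$ for $k=0,1,\ldots,m-1$, which are visible from the structure of $yx^m z$ and are obviously $m$ distinct positions yielding $m$ occurrences of $x$.

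The heart of the proof is the matching upper bound. The cleanest route I see is to embed $yx^m z$ as a factor of $x^{m+2}$: since $y$ is a proper suffix and $z$ a proper prefix of $x$, I can write $x=y'y$ and $x=zz'$ with $y',z'$ non-empty, and then $x^{m+2}=y'\cdot(yx^m z)\cdot z'$. Hence every occurrence of $x$ inside $yx^m z$ is an occurrence of $x$ inside $x^{m+2}$. The problem then reduces to classifying occurrences of $x$ in its own powers, for which I would establish the classical statement: if $x$ is primitive then the only occurrences of $x$ in $x^{k}$ are at positions $j|x|+1$ for $j=0,1,\ldots,k-1$. Of the $m+2$ candidate positions in $x^{m+2}$, the two extremes $j=0$ and $j=m+1$ are excluded from lying entirely inside the embedded copy of $yx^m z$ precisely because $|y|,|z|<|x|$, leaving the desired count of $m$.

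For the key classification, I would argue by contradiction. A non-aligned occurrence of $x$ inside $x^{m+2}$ must straddle the boundary between two consecutive copies of $x$ in the power, so the $|x|$-character window reads as $x[t{+}1..|x|]\cdot x[1..t]$ for some $1\le t\le |x|-1$. Setting $u=x[1..t]$ and $v=x[t{+}1..|x|]$ we have $x=uv$ from the aligned reading and $x=vu$ from the non-aligned one, i.e.\ the conjugate identity $uv=vu$. The classical consequence that $u=w^{i}$ and $v=w^{j}$ for some common word $w$ then forces $x=w^{i+j}$ with $i+j\ge 2$, contradicting the primitivity of $x$.

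The main obstacle is a packaging one: handling occurrences that start inside $y$ or extend into $z$ in the same framework as purely interior occurrences, without a proliferation of cases. The embedding into $x^{m+2}$ accomplishes this unification, reducing the entire argument to the single classical fact about primitive words and their powers.
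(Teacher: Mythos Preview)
The paper does not actually prove this lemma: it explicitly labels Lemmas~\ref{syncpr} and~\ref{comfactor} as ``folklore and presented without proofs.'' So there is no in-paper argument to compare against.

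Your proposal is correct and is in fact the standard proof one finds in the literature. The embedding of $yx^{m}z$ into $x^{m+2}$ via $x^{m+2}=y'(yx^{m}z)z'$ with $y',z'$ non-empty is exactly the right move, because it reduces all boundary cases (occurrences starting in $y$ or ending in $z$) to the single classical fact that a primitive word $x$ occurs in $x^{k}$ only at the aligned positions. Your contradiction argument for that fact---a misaligned occurrence forces $x=uv=vu$ with $u,v$ non-empty, hence $x$ is a non-trivial power by the commutation lemma---is the usual one and is sound. The final counting step is also right: of the $m{+}2$ aligned occurrences in $x^{m+2}$, the first begins before position $|y'|{+}1$ (since $|y'|\ge 1$) and the last ends after position $(m{+}1)|x|{+}|z|$ (since $|z|<|x|$), so precisely $m$ of them lie inside the embedded $yx^{m}z$.

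One small remark: your write-up implicitly treats $y$ and $z$ as possibly empty (the paper's convention allows proper to include the empty string), and the embedding argument handles that case automatically since then $|y'|=|x|$ or $|z'|=|x|$, which only strengthens the exclusion of the extreme occurrences. You might make that explicit in a final write-up, but the logic already covers it.
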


Note that Lemma~\ref{syncpr} implies that a primitive string does not equal to any of its
conjugates.

\begin{lemma}[Common factor lemma]\label{comfactor}
For any primitive strings $x$ and $y$, if a non-trivial power of $x$ and
a non-trivial power of $y$ have a common factor of length $|x|\+|y|$,
then $x$ and $y$ are conjugates.
\end{lemma}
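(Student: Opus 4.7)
The plan is to reduce to the Fine and Wilf periodicity theorem and then apply primitivity. Let $w$ be a common factor of $x^m$ and $y^n$ (with $m,n\geq 2$) of length $|x|+|y|$. Because $w$ sits inside $x^m$, it has period $|x|$; the symmetric observation for $y^n$ gives $w$ the period $|y|$ as well. Since $|w|=|x|+|y|\geq |x|+|y|-\gcd(|x|,|y|)$, Fine and Wilf's theorem yields that $d=\gcd(|x|,|y|)$ is also a period of $w$.

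Next I would exploit primitivity. Let $u$ be the prefix of $w$ of length $|x|$; since the occurrence of $w$ in $x^m$ starts at some position $i$ inside a copy of $x$, the Synchronization principle (Lemma~\ref{syncpr}) identifies $u$ with the conjugate $x[i..|x|]\,x[1..i{-}1]$ of $x$. The word $u$ inherits the period $d$ from $w$, and $d$ divides $|x|$. If $d<|x|$, then $u$ would be a non-trivial power of its prefix of length $d$, contradicting the fact that $u$ is primitive (primitivity being invariant under conjugation, and $x$ being primitive by hypothesis). Hence $d=|x|$, so $|x|$ divides $|y|$. Interchanging the roles of $x$ and $y$ yields $|y|$ divides $|x|$, whence $|x|=|y|=d$.

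With $|x|=|y|$, the prefix $u$ of $w$ of length $|x|$ is simultaneously a conjugate of $x$ (by the above) and, by the same reasoning applied to the occurrence of $w$ inside $y^n$, a conjugate of $y$. Since conjugacy is an equivalence relation on strings, $x$ and $y$ are conjugates of each other, which is the desired conclusion.

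The one nontrivial step is the appeal to Fine and Wilf; the length bound $|x|+|y|$ is exactly what is needed for the hypothesis to hold (with slack equal to $\gcd(|x|,|y|)$), so the invocation is tight. If one wishes to keep the argument self-contained within the tools introduced so far in the paper, the required periodicity can instead be derived by a short case analysis on how the occurrences of $x$ inside $w$ overlap the occurrences of $y$, again using only Lemma~\ref{syncpr}; this is the step I would expect to require the most care, since the bookkeeping on positions and offsets mod $|x|$ and mod $|y|$ is where a naive argument could go astray.
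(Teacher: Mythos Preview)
The paper does not prove Lemma~\ref{comfactor}; it is explicitly listed as folklore and ``presented without proofs.'' So there is no paper argument to compare against, and the question reduces to whether your argument stands on its own.

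It does. Your reduction to the Fine--Wilf theorem is the standard folklore proof: the common factor $w$ of length $|x|+|y|$ inherits both $|x|$ and $|y|$ as periods, hence $d=\gcd(|x|,|y|)$ as a period by Fine--Wilf; the length-$|x|$ prefix of $w$ is a conjugate of $x$ and, having period $d$ with $d\mid|x|$, would be a proper power unless $d=|x|$; by symmetry $|x|=|y|$, and then that same prefix is simultaneously a conjugate of $x$ and of $y$. Two minor remarks: the identification of the length-$|x|$ prefix of $w$ with a conjugate of $x$ does not actually require Lemma~\ref{syncpr}---it is the elementary observation that any length-$|x|$ factor of $x^m$ is a rotation of $x$---so you may want to drop that citation; and your word ``tight'' is slightly misleading, since (as you yourself note) there is slack $\gcd(|x|,|y|)$ in the Fine--Wilf hypothesis. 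Neither point affects correctness.
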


\subsection{Double squares}

\medskip
\begin{definition}
A configuration of two squares $u^2$ and $U^2$ in a string $x$
starting at the same position is referred to as a \emph{\ds}. In 
case that $|u|<|U|$, we say that  \emph{$(u,U)$ is a \ds}, 
i.e. the smaller generator is listed first.

 For a \ds\ $(u,U)$ in a string $x$, if $|u|<|U|<2|u|$,
we say that the squares $u^2$ and $U^2$ are \emph{proportional} 
and we call such a \ds\ \emph{balanced}.

For a \ds\ $(u,U)$, if moreover $u^2$ and $U^2$ are 
rightmost occurrences in $x$, we refer to the \ds\ $(u,U)$ as 
\emph{\fsds\ of $x$}.
\end{definition}

\medskip
Note that if $(u,U)$ is a \ds, respective \bds, in $x$ and $x$ is a substring of $y$,
then $(u,U)$ is a \ds, respective \bds, in $y$ as well.
For \fsds, due to $u^2$ being a rightmost occurrence
in $x$,  $|U|<2|u|$, as otherwise in $x$ would be a farther copy of $u^2$, 
and so every \fsds\ is automatically balanced. If $x$ is a substring of $y$,
$(u,U)$ need not be a \fsds\ in $y$; on the other hand if $x$ is a suffix
of $y$, then $(u,U)$ is a \fsds\ in $y$ as well.
We refer to the \bds{s} of rightmost occurrences  as  
\fsds{s} in recognition of \FrSi's pioneering efforts
in the problem. 

\medskip
In Lemma~\ref{ds} we shall show that certain types of \bds{s}
have a unique factorization consisting of  a nearly periodical
repetition of a primitive string. The following Lemma~\ref{uu-unique-factorization} is used in Lemma~\ref{ds} to prove uniqueness of this factorization.

\begin{lemma}
\label{uu-unique-factorization}
Let ${u_1}^pu_2={v_1}^qv_2$ where $u_1, v_1$ are primitive, $u_2$ is a non-trivial proper prefix of $u_1$, and $v_2$ is a non-trivial proper prefix of $v_1$. If $p\geq 2$ and $q\geq 2$, then $u_1=v_1$, $u_2=v_2$, and $p=q$.
\end{lemma}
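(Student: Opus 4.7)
The plan is to reduce the statement to the Common factor lemma and then conclude by a simple length/prefix comparison.

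First I would assume without loss of generality that $|u_1|\le |v_1|$. The crucial observation is that the common string $w:=u_1^pu_2=v_1^qv_2$ is a prefix of $u_1^{p+1}$ (because $u_2$ is a prefix of $u_1$) and also a prefix of $v_1^{q+1}$ (because $v_2$ is a prefix of $v_1$). So $w$ is a common factor of a non-trivial power of $u_1$ and a non-trivial power of $v_1$. The second step is to verify that $|w|>|u_1|+|v_1|$. Using the $v_1$-factorization and $q\ge 2$, one gets $|w|=q|v_1|+|v_2|\ge 2|v_1|+1>|u_1|+|v_1|$, where the last inequality uses $|u_1|\le |v_1|$. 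By the \ComFactor, $u_1$ and $v_1$ must be conjugates, and since conjugates have the same length, $|u_1|=|v_1|$.

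Once the two primitive roots have equal length, both being prefixes of the same string $w$ forces $u_1=v_1$. The final step is to cancel the common $u_1=v_1$ prefix powers: the identity becomes $u_1^{p-1}u_2=u_1^{q-1}v_2$. If $p\ne q$, say $p>q$ without loss of generality, then $u_1^{p-q}u_2=v_2$, whose left-hand side has length at least $|u_1|+1>|v_1|>|v_2|$, a contradiction. Hence $p=q$, and then $u_2=v_2$ follows immediately.

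The main obstacle is really just making sure the length bound in the second step is tight enough to invoke the \ComFactor; everything else is mechanical. The assumption $q\ge 2$ (rather than only $p\ge 2$) is what supplies the extra copy of $v_1$ needed to reach $|u_1|+|v_1|$, and the WLOG choice $|u_1|\le |v_1|$ is what makes that bound usable. Without either of these, one would only get a common factor of length slightly above $2\min(|u_1|,|v_1|)$, which is insufficient.
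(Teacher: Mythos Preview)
Your proof is correct and follows essentially the same approach as the paper: both invoke the \ComFactor\ to conclude that $u_1$ and $v_1$ are conjugates, then use that they are prefixes of the same string to get $u_1=v_1$, after which $p=q$ and $u_2=v_2$ are immediate. Your version is in fact a bit more careful---you explicitly pass to $u_1^{p+1}$ and $v_1^{q+1}$ and verify the length bound $|w|\ge |u_1|+|v_1|$, whereas the paper applies the lemma directly to $u_1^p$ and $v_1^q$ and leaves those details implicit.
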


\begin{proof}
Since $p\geq 2$ and $q \geq 2$, and ${u_1}^p$ and ${v_1}^q$ have a common factor of size  $|u_1|\+|v_1|$, then by Lemma~\ref{comfactor}, $u_1=v_1$. Thus, $u_2=v_2$ and $p=q$.
\end{proof}

Note that in Lemma~\ref{uu-unique-factorization}, $p\geq 2$ and $q\geq 2$ are essential conditions.
For instance, $u_1=aabb$, $u_2=aa$, and $p=2$ gives ${u_1}^pu_2=aabbaabbaa$,
and $v_1=aabbaabba$, $v_2=a$, and $q=1$ gives ${v_1}^qv_2=aabbaabbaa$; that is,  ${u_1}^pu_2={v_1}^qv_2$.

As we often need to refer to the various occurrences of the
same factor,  we use a special subscript $[1]$, $[2]$, etc to
distinguish them. For instance, $u_{[1]}$ may refer to the first occurrence
of $u$ in $u^3$, while $u_{[2]}$ would refer to the second occurrence, etc.

\medskip
Lemma~\ref{ds}  gives various contexts in which a \bds\ has
a unique factorization. 
While a weaker form of Lemma~\ref{ds}  is proven in~\cite{L13},  and item $(c)$ and
the fact the $U^2$ must be primitively rooted are proven in~\cite{L}, the uniqueness is not addressed in either. 

\begin{lemma}
\label{ds}
Let $(u,U)$ be a balanced double square. If one of the following conditions is
satisfied

%\vspace{-10pt}
\begin{my_itemize}
\leftskip=-5pt
\item[$(\text{a})$] $u$ is primitive
\item[$(\text{b})$] $U$ is primitive
\item[$(\text{c})$] $u^2$ has no further occurrence in $U^2$
\end{my_itemize}

\vspace{-8pt}
\noindent
then there is a unique primitive string $u_1$, a  unique non-trivial proper prefix $u_2$ of $u_1$, and unique integers $e_1$ and $e_2$ satisfying
$1\leq e_2\leq e_1$ such that $u = {u_1}^{e_1}u_2$ and  $U = {u_1}^{e_1}u_2{u_1}^{e_2}$. Moreover, $U$ is primitive.
\end{lemma}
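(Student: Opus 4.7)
The plan is to extract a forced periodic structure from the double-square hypothesis, and then verify the three items (existence with $u_2$ non-trivial, uniqueness, and primitivity of $U$) in turn; the primitivity of $U$ will be the main obstacle. Since $|u|<|U|<2|u|$, $u$ is a proper prefix of $U$, so $U=uv$ with $0<|v|=|U|-|u|<|u|$. Because $u^2$ is a prefix of $U^2=uvuv$, comparing characters on positions $|u|+1,\dots,2|u|$ forces $u$ to have period $|v|$. I would then let $u_1$ denote the primitive root of $v$, so $v=u_1^{e_2}$ for a unique $e_2\geq 1$; combining ``$u$ is a prefix of $v^{\infty}$'' with ``$v$ is a prefix of $u_1^{\infty}$'' shows $u$ is a prefix of $u_1^{\infty}$. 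Hence $u=u_1^{e_1}u_2$ with $e_1=\lfloor|u|/|u_1|\rfloor$ and $u_2$ the prefix of $u_1$ of length $|u|\bmod|u_1|$, and $U=u_1^{e_1}u_2u_1^{e_2}$; the inequality $e_1\geq e_2$ follows from $|u|>|v|=e_2|u_1|$.

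To see $u_2$ is non-empty I would argue by contradiction. If $u_2$ were empty, both $u=u_1^{e_1}$ and $U=u_1^{e_1+e_2}$ would be powers of $u_1$, and each condition fails in turn: (a) primitivity of $u$ forces $e_1=1$ and so $|u|=|u_1|\leq|v|$, contradicting $|v|<|u|$; (b) primitivity of $U$ forces $e_1+e_2=1$, impossible; and (c) fails because $u^2=u_1^{2e_1}$ re-occurs in $U^2=u_1^{2(e_1+e_2)}$ at position $|u_1|+1>1$. For uniqueness, any factorization $U=\hat{u}_1^{\hat{e}_1}\hat{u}_2\hat{u}_1^{\hat{e}_2}$ satisfying the conclusion has its last $|U|-|u|$ symbols equal to $\hat{u}_1^{\hat{e}_2}$; but this suffix is $v$, so uniqueness of the primitive root gives $\hat{u}_1=u_1$ and $\hat{e}_2=e_2$, and then $u_1^{e_1}u_2=u=u_1^{\hat{e}_1}\hat{u}_2$ fixes $\hat{e}_1=e_1$ and $\hat{u}_2=u_2$.

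The hard step is showing $U$ is primitive. Suppose for contradiction $U=W^m$ with $W$ primitive and $m\geq 2$. The key observation is that $U^2=u_1^{e_1}u_2u_1^{e_1+e_2}u_2u_1^{e_2}$ contains $u_1^{e_1+e_2}$ as a factor of length at least $2|u_1|$, and at the same time has period $|W|$. If $|W|\leq|u_1|$, this factor has length $\geq|u_1|+|W|$ and Lemma~\ref{comfactor} gives that $u_1$ and $W$ are conjugate; hence $|u_1|=|W|$, and $m|u_1|=(e_1+e_2)|u_1|+|u_2|$ forces $|u_1|$ to divide $|u_2|$, contradicting $0<|u_2|<|u_1|$. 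If $|W|>|u_1|$, then avoiding the same conclusion requires $(e_1+e_2-1)|u_1|<|W|\leq|U|/2$, which simplifies to $(e_1+e_2-2)|u_1|<|u_2|<|u_1|$ and forces $e_1=e_2=1$ and $m=2$. In this last sub-case $U=u_1u_2u_1=W^2$ with $|W|=|u_1|+|u_2|/2$; writing $u_2=\alpha\beta$ with $|\alpha|=|\beta|=|u_2|/2$ and equating the two halves of $U$ yields $u_1\alpha=\beta u_1$. Since $u_2$ is a prefix of $u_1$, both $\alpha$ and $\beta$ are prefixes of $u_1$ of the same length, so $\alpha=\beta$; then $u_1\alpha=\alpha u_1$ says $\alpha$ commutes with the primitive $u_1$, so $\alpha$ is a power of $u_1$, and since $|\alpha|<|u_1|$ this forces $\alpha$ to be empty, hence $u_2$ is empty, contradicting the non-triviality established above.
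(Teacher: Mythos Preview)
Your argument is correct. The existence step and the verification that $u_2$ is non-trivial under each of (a), (b), (c) match the paper's proof essentially verbatim.

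Where you diverge is in the two remaining items. For \emph{uniqueness}, your route is actually cleaner than the paper's: you observe that in any admissible factorization the suffix of $U$ of length $|U|-|u|$ equals $\hat u_1^{\hat e_2}$, and since this suffix is the fixed string $v$, uniqueness of the primitive root pins down $\hat u_1$ and $\hat e_2$ immediately, after which $\hat e_1,\hat u_2$ follow. The paper instead invokes its separate Lemma~\ref{uu-unique-factorization} (Common factor lemma applied to $u_1^{e_1}u_2=w_1^{f_1}w_2$ when both exponents are $\geq 2$) and then handles the boundary cases $e_1=1$ or $f_1=1$ by hand.

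For \emph{primitivity of $U$}, the paper avoids your case split by first noting $|W|\leq |U|/2 < |u| = e_1|u_1|+|u_2|$; then the factor $u_1^{e_1+e_2}u_2$ of $U^2$ already has length $\geq |W|+|u_1|$, so one application of Lemma~\ref{comfactor} forces $u_1$ and $W$ to be conjugate and the length contradiction $|U|=(e_1+e_2)|u_1|+|u_2|\in |u_1|\mathbb{Z}$ finishes it. Your approach instead bifurcates on $|W|\lessgtr|u_1|$ and, when $|W|>|u_1|$, squeezes down to the residual case $e_1=e_2=1$, $m=2$, which you dispatch by an explicit commutation argument. Both work; the paper's single bound is shorter, while your uniqueness argument is the more economical of the two.
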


\begin{proof}
Let $v_1$ denote the overlap of $U_{[1]}$ with $u_{[2]}$; that is, $u=v_1\o{v}_1$ for some $\o{v}_1$ and $U=uv_1$, see the
diagram below.

\includegraphics[scale=1]{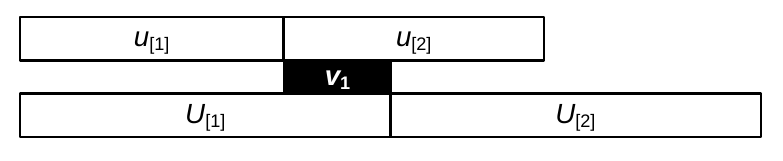}

\noindent
Thus, $u$ is a prefix of $v_1U$ and
$u={v_1}^{k}v_2$ for some prefix $v_2$ of $v_1$ and $k\geq 1$. 
Let $u_1$ be the primitive root of $v_1$. Then $v_1={u_1}^{e_2}$
for some $e_2\geq 1$. Therefore $u={u_1}^{e_1}u_2$ for
some $e_1\geq k e_2$ and some prefix $u_2$ of $u_1$.
The prefix $u_2$ must be non-trivial, as otherwise:\\
$(a)$ Let us assume that $u_2$ is the empty string. If $e_1\geq 2$, then $u=u_1^{e_1}$ and hence not primitive,
a contradiction. If $e_1=1$, then $e_2=1$ and so $U={u_1}^2$ and $u=u_1$ and
so $|U|=2|u|$, a contradiction.\\ 
$(b)$ $U={u_1}^{e_1\+e_2}$ and $e_1\+e_2\geq 2$, hence $U$ would not be primitive.\\
$(c)$  there would be a farther occurrence of $u^2={u_1}^{2e_1}$ in $U^2={u_1}^{2e_1\+2e_2}$.

\medskip
To prove the uniqueness, consider some primitive $w_1$, its non-trivial proper prefix $w_2$, and integers $f_1\geq f_2\geq 1$ such that
$u={w_1}^{f_1}w_2$ and $U={w_1}^{f_1}w_2{w_1}^{f_2}$.
If $e_1\geq 2$ and $f_1\geq 2$, then by Lemma~\ref{uu-unique-factorization}, 
$u_1=w_1$ and $e_1=f_1$ and it follows that $u_2=w_2$ and $e_2=f_2$.
If $e_1=f_1=1$, it follows that $u=u_1u_2=w_1w_2$.
Since $U=uu_1=uw_1$, $u_1=w_1$ and so $u_2=w_2$.
The remaining case corresponds to exactly one of the exponents $e_1$ and
$f_1$ being equal to 1.
Without loss of generality, we can assume that $e_1=1$ and $f_1>1$.
We have $u=u_1u_2={w_1}^{f_1}w_2$ and $U=u_1u_2u_1={w_1}^{f_1}w_2{w_1}^{f_2}$. Thus, $u_1={w_1}^{f_2}$.
As $u_1$ is primitive, $f_2=1$, and so $u_1=w_1$.
Therefore, $u_1u_2={w_1}^{f_1}w_2={u_1}^{f_1}w_2$ and so $f_1=1$, contradicting $f_1>1$.

\medskip
Let us assume that $U$ is not primitive and derive a contradiction. Thus, $U=v^n$ for some primitive $v$ and some $n\geq 2$.
It follows that $|v|\leq \frac{|U|}{2} = \frac{|{u_1}^{e_1}|\+|u_2|\+|{u_1}^{e_2}|}{2}\leq
 \frac{|{u_1}^{e_1}|\+|u_2|\+|{u_1}^{e_1}|\+|u_2|}{2}=|{u_1}^{e_1}|\+|u_2|$.
Now consider $U^2=v^{2n}={u_1}^{e_1}u_2{u_1}^{e_1\+e_2}u_2{u_1}^{e_2}$.  It follows
that ${u_1}^{e_1\+e_2}u_2$ is a factor of $v^{2n}$, $2n\geq 2$
of size $\geq |v|\+|u_1|$, $e_1\+e_2\geq 2$,
and so by Lemma~\ref{comfactor}, $u_1$ and $v$ are
conjugates, hence $u_1=v$. Thus $U=v^n={u_1}^n={u_1}^{e_1}u_2{u_1}^{e_1}$ and so
$n|u_1|=(e_1\+e_2)|u_1|\+|u_2|$, which is
impossible as $0<|u_2|<|u_1|$. Therefore, $U$ must be primitive.
\end{proof}

\begin{definition} [Notation and terminology] 
\label{notat}
If a \bds\ satisfies one of three conditions (a), (b), or (c) of Lemma~\ref{ds},
we will refer to such \ds\ as \emph{factorizable}.
We use the following notational convention for \fds{s}:
a \ds\ $\cal U$ consists of two squares $u^2$ and $U^2$, where $|u|<|U|$ and so  we refer  to $u^2$ respective $U^2$ as the \emph{shorter} respective \emph{longer}, \emph{square} of $\cal U$,  and to the starting position of $u^2$ and $U^2$ as the \emph{starting position} of $\cal U$.
The unique exponents are denoted as $\p{U}$ and $\q{U}$, the repeating primitive part of $u$ is denoted as $u_1$, the prefix of $u_1$ completing $u$ is denoted as $u_2$.  Thus $u={u_1}^{\p{U}}u_2$ and
$U=u{u_1}^{\q{U}}={u_1}^{\p{U}}u_2{u_1}^{\q{U}}$.
Since $u_2$ is a non-trivial proper prefix of $u_1$, there is complement $\o{u}_2$ of $u_2$ in $u_1$ so that $u_1=u_2\o{u}_2$.
The conjugate $\o{u}_2u_2$ of $u_1$ is denoted as $\c{u}_1$, i.e. 
$\c{u}_1=\o{u}_2u_2$.
\end{definition}

For instance, a \fds\ $\cal V$ consists of the shorter square 
$v^2$ and the
longer square $V^2$, and $v={v_1}^\p{V}v_2$ and $V={v_1}^\p{V}v_2{v_1}^\q{V}$.
We would like to point out that for any 
\fds\ $\cal U$, $|U^2|=2((\p{U}\+\q{U})|u_1|\+|u_2|)\geq
2((1+1)2+1)=10$ since $\p{U}\geq \q{U}\geq 1$, $|u_1|\geq 2$,
and $|u_2|\geq 1$. Thus, only strings of length at least 10 may contain a \fds. Note also, that by (c) of Lemma~\ref{ds}, every
\fsds\ is a \fds.
Lemma~\ref{canon1}  further specifies the structure
of a \fds, i.e. the fact that the shorter and the longer squares must have essentially different structures.

\begin{lemma}
\label{canon1}
If ${\cal U}$  is a \fds\ so that $u={v_1}^iv_2$ for some primitive
$v_1$, some  non-trivial proper prefix $v_2$ of $v_1$, and some integer $i\geq 1$;
then $U\neq{v_1}^jv_2$ for any $j\geq 1$.
\end{lemma}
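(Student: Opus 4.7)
Proceed by contradiction: suppose $U = v_1^j v_2$ for some $j \geq 1$. Since $|U| > |u|$ and $0 < |v_2| < |v_1|$, this forces $j > i$. Apply Lemma~\ref{ds} to the factorizable \bds\ $(u,U)$ to obtain the canonical factorization $u = u_1^{e_1} u_2$ and $U = u_1^{e_1} u_2 \cdot u_1^{e_2} = u \cdot u_1^{e_2}$ with $u_1$ primitive, $u_2$ a non-trivial proper prefix of $u_1$, and $e_1 \geq e_2 \geq 1$. The overall plan is to compute the unique string $s$ satisfying $U = u s$ in two different ways and extract a contradiction with the primitivity of $v_1$.

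From the canonical factorization, $s = u_1^{e_2}$. Independently, writing $v_1 = v_2 \o{v}_2$ and using the elementary identity
\[
  (v_2 \o{v}_2)^k\, v_2 \;=\; v_2\, (\o{v}_2 v_2)^k \qquad (k \geq 0),
\]
one gets $v_1^j v_2 = v_1^i \cdot v_2 \cdot (\o{v}_2 v_2)^{j-i} = u \cdot \c{v}_1^{\,j-i}$, so also $s = \c{v}_1^{\,j-i}$. Hence $u_1^{e_2} = \c{v}_1^{\,j-i}$. Since $\c{v}_1$ is primitive (being a conjugate of the primitive $v_1$), $u_1$ is primitive, and both exponents are at least $1$, uniqueness of primitive roots yields $u_1 = \c{v}_1$.

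To finish, observe that both $v_1$ and $u_1 = \c{v}_1$ occur as prefixes of $u$ (from $u = v_1^i v_2$ with $i \geq 1$ and $u = u_1^{e_1} u_2$ with $e_1 \geq 1$) and share the common length $|v_1|$; consequently $v_1 = \c{v}_1$, i.e.\ $v_2 \o{v}_2 = \o{v}_2 v_2$. Since $v_2$ is non-trivial by hypothesis and $\o{v}_2$ is non-trivial because $v_2$ is a \emph{proper} prefix of $v_1$, this asserts that $v_1$ equals a non-trivial conjugate of itself, contradicting primitivity via Lemma~\ref{syncpr}.

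The only step requiring real care is the tail identity $U = u \cdot \c{v}_1^{\,j-i}$, which rests on the simple cyclic identity above; the remaining steps are one-line appeals to primitivity and uniqueness of primitive roots, so no case analysis on the exponents $e_1, e_2, i, j$ is needed.
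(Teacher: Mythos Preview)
Your proof is correct and somewhat cleaner than the paper's. The paper proceeds by a case split on whether $j=i{+}1$ or $j>i{+}1$. For $j>i{+}1$ it observes (tersely) that since $U$ is a prefix of $u^2=v_1^{\,i}v_2v_1^{\,i}v_2$ and $|U|\geq |v_1^{\,i}v_2v_1|$, the string $v_1^{\,i}v_2v_1$ is a prefix of $U=v_1^{\,j}v_2$ and hence of $v_1^{\,j}$, contradicting the \Sync. For $j=i{+}1$ it follows essentially your route: from $U=u\,u_1^{\,e_2}$ it obtains $u_1^{\,e_2}=\o{v}_2v_2$, whence $e_2=1$ and $u_1=\c{v}_1$, and then derives a contradiction by writing $U$ simultaneously as $u_1^{\,e_1}u_2u_1$ and as $v_2u_1^{\,i+1}$ and invoking Lemma~\ref{syncpr}.

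Your argument dispenses with the case split by treating all $j>i$ through the single tail identity $u_1^{\,e_2}=\c{v}_1^{\,j-i}$, and then closing via the observation that $u_1$ and $v_1$, being equal-length prefixes of $u$, must coincide---so $v_1=\c{v}_1$ directly. This is a genuine streamlining: the two cases collapse into one, and the final contradiction (a primitive word equal to a non-trivial rotation of itself) is reached more directly than in the paper's $j=i{+}1$ branch.
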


\begin{proof}
Clearly, $U\ne{v_1}^jv_2$ for $j\leq i$ since  $|U|>|u|$. Thus, consider $j>i$ and
assume by contradiction that  $U={v_1}^jv_2$.
Then, for $j=i\+1$,
$U=u{u_1}^{\q{U}}={v_1}^iv_2{u_1}^{\q{U}}={v_1}^{i\+1}v_2$
and so $v_2{u_1}^{\q{U}}={v_1}v_2$. Denote by $\o{v}_2$ the complement
of $v_2$ in $v_1$, i.e. $v_1=v_2\o{v}_2$. Then
$v_2{u_1}^{\q{U}}=v_2\o{v}_2v_2$, and so
${u_1}^{\q{U}}=\o{v}_2v_2$. Since $\o{v}_2v_2$ is a conjugate of
$v_1$ and hence primitive, it follows that $\q{U}=1$ and thus $u_1=\o{v}_2v_2$.
Thus $U={v_1}^{i\+1}v_2={v_2}({\o{v}_2v_2})^{i\+1}={v_2}{u_1}^{i\+1}$ and
also $U={u_1}^{\p{U}}u_2{u_1}$, so ${u_1}^{\p{U}}u_2{u_1}={v_2}{u_1}^{i\+1}$
contradicting Lemma~\ref{syncpr} as $|v_2|<|v_1|=|u_1|$.
For $j>i\+1$, ${v_1}^iv_2v_1$ must be a prefix of ${v_1}^j$ contradicting
Lemma~\ref{syncpr}
\end{proof}

\vspace{-10pt}
Lemma~\ref{nonprimitiveds} discusses the case when the shorter square of a \fds\ is not primitively 
rooted. It shows that the size of $U$ is highly constraint.

\begin{lemma}
\label{nonprimitiveds}
Let $\cal U$ be a \fds\ so that $u=v^k$, for some primitive $v$ and some $k\geq 2$. Then $\p{U}=\q{U}=1$ and $U=v^{2k\-1}v_1$ for some non-trivial proper prefix $v_1$ of $v$. Moreover, $u_1=v^{k\-1}v_1$ and $v_1u_2=v$.
\end{lemma}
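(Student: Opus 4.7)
My plan is to apply Lemma~\ref{ds} to obtain the canonical factorization of $u$ and $U$, to pin down the exponents $\p{U}$ and $\q{U}$ by common-factor considerations, and finally to fix the shape of $u_1$ via the \Sync.

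Since $u = v^k$ with $k \geq 2$ is non-primitive, condition~(a) of Lemma~\ref{ds} fails; as $\cal U$ is a \fds, (b) or (c) must hold, so Lemma~\ref{ds} supplies $u = {u_1}^{\p{U}} u_2$ and $U = {u_1}^{\p{U}} u_2 {u_1}^{\q{U}}$ with $u_1$ primitive, $u_2$ a non-trivial proper prefix of $u_1$, $\p{U} \geq \q{U} \geq 1$, and $U$ primitive. To show $\p{U} = 1$ (which forces $\q{U} = 1$), assume $\p{U} \geq 2$. Then ${u_1}^{\p{U}}$ is a non-trivial power of the primitive $u_1$ sitting inside the non-trivial power $v^k$. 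Whenever $(\p{U}\-1)|u_1| \geq |v|$, the \ComFactor\ forces $u_1$ and $v$ to be conjugates, and since both are primitive prefixes of $u$ they must coincide, giving $u = {u_1}^k$; this contradicts $u = {u_1}^{\p{U}} u_2$ with $u_2$ non-trivial and strictly shorter than $u_1$. Otherwise $(\p{U}\-1)|u_1| < |v|$; combined with $|u| = \p{U}|u_1|\+|u_2| < (\p{U}\+1)|u_1|$ this forces $k(\p{U}\-1) < \p{U}\+1$, leaving only the borderline $\p{U} = 2,\ k = 2$. In that case $u_1$ is a proper prefix of $v$, and writing ${u_1}^2$ as a prefix of $v^2$ forces $v$ to have period $|u_1|$, again contradicting primitivity of $v$.

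With $\p{U} = \q{U} = 1$, the equalities $u = u_1 u_2 = v^k$ and $|u_2| < |u_1|$ give $|u_1| > k|v|/2 \geq |v|$. Writing $|u_1| = a|v|\+b$ with $a \geq 1$ and $0 \leq b < |v|$, primitivity of $u_1$ rules out $b = 0$ (otherwise $u_1 = v^a$ with $a \geq 2$ is non-primitive); setting $v_1 = v[1..b]$ gives $u_1 = v^a v_1$ with $v_1$ a non-trivial proper prefix of $v$, and a direct computation yields $u_2 = v[b\+1..|v|] \cdot v^{k\-a\-1}$. Hence $v_1 u_2 = v \cdot v^{k\-a\-1}$, so the stated equality $v_1 u_2 = v$ is equivalent to $a = k\-1$.

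Establishing $a = k\-1$ is the main obstacle, and I plan to do it by the \Sync. Suppose for contradiction $a \leq k\-2$, so that $m := k\-a\-1 \geq 1$. The condition that $u_2$ is a prefix of $u_1$ unfolds to the identity $v[b\+1..|v|] \cdot v^m = v^m \cdot v[1..|v|\-b]$. Applying Lemma~\ref{syncpr} separately to each side --- on the left, $yv^m$ with $y = v[b\+1..|v|]$ a non-trivial proper suffix of $v$; on the right, $v^m z$ with $z = v[1..|v|\-b]$ a non-trivial proper prefix of $v$ --- each side contains exactly $m$ occurrences of $v$, but at incompatible positions: the leftmost occurrence on the left is at position $|v|\-b\+1$, whereas on the right it is at position~$1$. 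The contradiction forces $m = 0$, i.e., $a = k\-1$; the identity
\[
U \;=\; u_1 u_2 u_1 \;=\; v^{k\-1}(v_1 u_2) v^{k\-1} v_1 \;=\; v^{k\-1} \cdot v \cdot v^{k\-1} v_1 \;=\; v^{2k\-1} v_1
\]
then concludes the proof.
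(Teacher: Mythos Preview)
Your argument has a small but real gap in the residual case $\p{U}=2$, $k=2$. From ${u_1}^2$ being a prefix of $v^2$ you correctly deduce that $v$ has period $|u_1|$, but a primitive string may well carry a non-trivial period (for instance $aba$ has period~$2$), so this alone does not contradict the primitivity of $v$. What saves you is that the \emph{whole} of $u=v^2$ is a prefix of ${u_1}^3$ (because $u_2$ is a prefix of $u_1$), so ${u_1}^3$ and $v^2$ share a common prefix of length $2|v|\geq |u_1|\+|v|$, and now the \ComFactor\ forces $u_1$ and $v$ to be conjugates, hence equal, the desired contradiction. In fact this observation renders your case split superfluous: whenever $\p{U}\geq 2$ and $k\geq 2$ one has $|u|\geq 2|u_1|$ and $|u|=k|v|\geq 2|v|$, hence $|u|\geq |u_1|\+|v|$, and $u$ itself is a common prefix of the non-trivial powers ${u_1}^{\p{U}\+1}$ and $v^k$. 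This is essentially the paper's (tersely stated) route to $\p{U}=1$.

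Your derivation of $a=k\-1$, on the other hand, is correct and takes a genuinely different path from the paper. The paper argues through the longer square: writing $U=v^tv_1$ and assuming $t\leq 2k\-2$, it aligns $u^2$ against $U^2$ at position $t|v|\+1$ to exhibit $v_1v$ and $v^2$ starting at the same spot, contradicting the \Sync. You never touch $U$ again once $\p{U}=\q{U}=1$ is established; instead you work entirely inside $u=u_1u_2=v^k$, exploiting only the constraint that $u_2$ is a prefix of $u_1$, and unfold it to the identity $v[b\+1..|v|]\,v^m=v^m\,v[1..|v|\-b]$ whose two readings place the $m$ guaranteed occurrences of $v$ at two disjoint sets of positions. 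This is a clean, self-contained argument that isolates precisely which hypothesis drives the shape of $u_1$, at the cost of a slightly longer bookkeeping of indices than the paper's geometric overlap picture.
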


\begin{proof}
Let us assume that $\p{U}\geq 2$ and derive a contradiction.
Then $u={u_1}^{\p{U}}u_2=v^k$, giving $|u_1|<|v|$.
It follows that ${u_1}^{\p{U}}u_2$ and $v^k$ have a common factor
of length $\geq |u_1|\+|v|$ and by Lemma~\ref{comfactor}, $u_1$ and $v$ are conjugates, and
so $u_1=v$. But then $|u|=\p{U}|u_1|\+|u_2|=k|u_1|$, which is impossible as
$0<|u_2|<|u_1|$. Therefore, $\p{U}=1$ and so $\q{U}=1$.

Since $U$ is a prefix of $v^{2k}$, $U=v^tv_1$ where $k\leq t\leq 2k\-1$ and $v_1$ is a proper prefix of $v$. 
Since $U$ must be primitive by Lemma~\ref{ds}, $v_1$ must be a
non-trivial proper prefix.
If $t=2k\-1$, then we are done and the proof is complete. Let us thus
assume that $t<2k\-1$.
Then $2k\-t\geq 2$ and so the suffix $v^{2k\-t}$ of $u^2$ starts
at the same position $p$ as the suffix $v_1U=v_1v^tv_1$ of $U^2$. Therefore
factors $v^2$ (a subfactor of $v^{2k\-t}$)  and $v_1v$ (a subfactor 
of $v_1v^tv_1$) start at the same position $p$,
contradicting Lemma~\ref{syncpr} as $v$ is primitive.

Since $U=uu_1$, $U=v^{2k\-1}v_1=v^kv^{k\-1}v_1=uv^{k\-1}v_1$,
and so $u_1=v^{k\-1}v_1$. Since $u=u_1u_2$,
$v^k=v^{k\-1}v_1u_2$ and so $v_1u_2=v$.
\end{proof}

\begin{definition}
A factor $u=x[i..j]$ of $x$ can be \emph{cyclically shifted
right by 1 position} if $x[i]=x[j\+1]$. The factor $u$ can be \emph{cyclically 
shifted right by $k$ positions} if $u$ can be cyclically shifted right by 1 position and the factor $x[i\+1..j\+1]$ can be cyclically
shifted right be $k\-1$ positions. Similarly for \emph{left cyclic
shifts}. By a \emph{trivial} cyclic shift we mean a shift by $0$ positions.
\end{definition}

\medskip
Note that if $v$ is a right cyclic shift of $u$, then $u$ and $v$ are
conjugates. Similarly for left cyclic shift.

\medskip
Let $x$ contain a \fds\ $\cal U$ and let $x=y_1U^2y_2$.
To cyclically shift $\cal U$ to the right means that both $u^2$ and $U^2$ must be cyclically shifted to the right.  The maximal right cyclic shift of
$u^2$ is determined by \lcp{u}, while the maximal right cyclic shift of $U^2$ is determined 
by the $lcp(U^2,y_2)$, where $lcp(x,y)$ is the length of the \emph{largest common prefix} of $x$ and $y$.  
Similarly, to cyclically shift $\cal U$ to the left means that both $u^2$ and $U^2$ must be cyclically shifted to the left.  The  maximal left cyclic shift of $u^2$ is determined by 
\lcs{u}, while the maximal left cyclic  shift of 
$U^2$ is determined by the $lcs(U^2,y_1)$, where $lcs(x,y)$ is the length of the 
\emph{largest common suffix} of $x$ and $y$. Thus, 
\lcs{u} represents the maximal potential left cyclic shift of $u^2$, while \lcp{u} 
represents the maximal potential right cyclic shift of $u^2$. 

\begin{lemma}
\label{rot2}
For any \fds\ $\cal U$, \lcp{u}\+\lcs{u} $\leq |u_1|\-2$.
\end{lemma}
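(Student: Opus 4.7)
The plan is to set $p = $\lcp{u} and $s = $\lcs{u} and, assuming for contradiction that $p + s \geq |u_1| - 1$, derive a contradiction with the primitivity of $u_1$. As setup, I would note that $\c{u}_1 = \o{u}_2 u_2$ is a nontrivial conjugate of the primitive $u_1 = u_2 \o{u}_2$ (since $1 \leq |u_2| \leq |u_1| - 1$), so by Lemma~\ref{syncpr}, $u_1 \neq \c{u}_1$ and $\c{u}_1$ is itself primitive. The easy subcase $p + s \geq |u_1|$ is immediate: the length-$p$ prefix and the length-$s$ suffix jointly cover every position of both strings, and they agree at those positions, forcing $u_1 = \c{u}_1$, a contradiction.

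\medskip

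The substantive subcase is $p + s = |u_1| - 1$, in which $u_1$ and $\c{u}_1$ agree at every position except a single index $d = p + 1$. The plan here is to use that $\c{u}_1$ is the cyclic rotation of $u_1$ by $k = |u_2|$ positions, so $\c{u}_1[i] = u_1[\phi(i)]$ for the cyclic shift $\phi(i) = ((i + k - 1) \bmod |u_1|) + 1$. The agreement condition then translates to $u_1[i] = u_1[\phi(i)]$ for every $i \neq d$. The orbits of $\phi$ on $\{1, \ldots, |u_1|\}$ have common length $m = |u_1|/\gcd(k, |u_1|)$. On any orbit that avoids $d$, the chained equalities propagate around the whole cycle, forcing all characters on that orbit to coincide. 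On the orbit through $d$, the single missing equality at $i = d$ still leaves the $m-1$ equalities $u_1[\phi(d)] = u_1[\phi^2(d)] = \cdots = u_1[\phi^{m}(d)] = u_1[d]$, again linking all orbit elements. Hence $u_1$ is constant on each $\phi$-orbit, so it has period $g = \gcd(k, |u_1|) \leq k \leq |u_1| - 1$, contradicting the primitivity of $u_1$.

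\medskip

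The main obstacle I anticipate is precisely the $p + s = |u_1| - 1$ case. A plain overlap argument handles $p + s \geq |u_1|$ but leaves exactly one unconstrained position, and absorbing that missing constraint requires the cyclic-orbit structure induced by conjugation by $|u_2|$. The subtle point is that within the orbit through $d$, removing the single equality at $i = d$ still leaves $m - 1$ consecutive orbit-edges that chain all $m$ orbit elements into one equivalence class; this is what prevents primitivity from being preserved and closes the gap of one position left open by the prefix/suffix overlap.
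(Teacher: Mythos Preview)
Your proof is correct but takes a different route from the paper in the critical subcase $p+s=|u_1|-1$. The paper argues via a character-counting observation: since $u_1$ and $\c{u}_1$ are conjugates, they have the same multiset of symbols; if they agree at every position except the single index $d$, then deleting position $d$ from each yields identical strings with identical symbol counts, so the deleted symbols $u_1[d]$ and $\c{u}_1[d]$ must coincide as well, forcing $u_1=\c{u}_1$ and contradicting primitivity. Your argument instead exploits the cyclic-shift structure: encoding $\c{u}_1[i]=u_1[\phi(i)]$ with $\phi$ the rotation by $|u_2|$, you observe that the $|u_1|-1$ surviving equalities $u_1[i]=u_1[\phi(i)]$ (for $i\neq d$) still connect every $\phi$-orbit into a single equivalence class, so $u_1$ has period $\gcd(|u_2|,|u_1|)<|u_1|$. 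The paper's proof is shorter and needs only that conjugates share a Parikh vector; your approach is a bit heavier but yields the stronger structural conclusion that $u_1$ would acquire a nontrivial period, and it generalizes naturally to showing that two distinct conjugates of a primitive word cannot differ in a single position regardless of how that position is located.
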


\begin{proof}
If \lcp{u}\+\lcs{u} $\geq |u_1|$, then $u_1=\c{u}_1$ contradicting the 
primitiveness of $u_1$. So 
\lcp{u}\+\lcs{u} $< |u_1|$.
Assume then that\linebreak
 \lcp{u}\+\lcs{u} $=|u_1|\-1$. Let $i=$ \lcp{u} and let $a$ be the 
symbol at position $i$ of $u_1$, i.e. $u_1[i]=a$.
Then $u_1[1.. i\-1]=\c{u}_1[1.. i\-1]$ as 
$|\{1,..i\-1\}|=$ \lcp{u}, and 
$u_1[i\+1.. |u_1|\-1]=\c{u}_1[i\+1.. |u_1|\-1]$ 
as $|\{i\+1,..,|u_1|\-1\}|=$ \lcs{u}. 
Thus, $u_1$ and $\c{u}_1$ coincide in all positions except possibly $i$.
Therefore $u_1[1..i\-1][i\+1..|u_1|\-1]$ and $\c{u}_1[1..i\-1][i\+1..|u_1|\-1]$ must have the same number of $a$'s. Since $u_1$ and $\c{u}_1$
are conjugates, they both have to have the same number of $a$'s.
Therefore $\c{u}_1[i]=a$ yielding $u_1=\c{u}_1$, and thus 
contradicting the primitiveness of $u_1$.
\end{proof}

\subsection{Inversion factors}

\medskip
\noindent
A key combinatorial property of \fds{s} is the highly 
constrained occurrences of so-called \emph{\invf s}. 
The notion of \invf\ is motivated by the two
occurrences of the  factor $\inv{u}$ in a \ds\ $\cal U$. Even though for
the purpose of this paper it would be sufficient to define \invf\ as any
cyclic shift of $\inv{u}$ which would greatly simplify the proof of 
the correspondingly simplified Lemma~\ref{invfactor},
we decided to include a more general definition of \invf\ and thus a more 
general version of Lemma~\ref{invfactor}.

\begin{definition}
Given a {\fds} ${\cal U}$, a factor of $U^2$ of length 
$2|u_1|$ starting at position $i$ is called \emph{\invf} if

$\begin{cases} U^2[i\+j]=U^2[i\+j\+|u_1|\+|u_2|] &\mbox{ for } 0\leq j< |\o{u}_2|, and \\
U^2[i\+j]=U^2[i\+j\+|u_2|] & \mbox{ for } |\o{u}_2|\leq j< |u_2|\+|\o{u}_2|. \end{cases}$\\
Note that an \invf\ of $\cal U$ has a form $\inv{v}$ 
where $|v_2|=|u_2|$ and  $|\o{v}_2|=|\o{u}_2|$.
\end{definition}

\noindent
In a \fds\ $\cal U$, \invf{s} $\inv{u}$ 
occur at positions $N_1({\cal U})$ and $N_2({\cal U})$
where

\smallskip
$N_1({\cal U})=\send{{u_1}^{\p{U}\-1}u_2}\+1=
(\p{U}\-1)|u_1|\+|u_2|\+1$ 

$N_2({\cal U})=\send{{u_1}^{\p{U}}u_2{u_1}^{\q{U}\+\p{U}\-1}u_2}\+1=
(2\p{U}\+\q{U}\-1)|u_1|\+2|u_2|\+1$.

\medskip
\noindent 
Such \invf{s} are referred to as \emph{natural}.

\medskip
Cyclic shifts of the \invf\ $\inv{u}$ are governed by the values of \lcp{u} and \lcs{u}. A cyclic shift
of an \invf\ is again an \invf. Thus, at every position of the union of the intervals  $[L_1({\cal U}),R_1({\cal U})]$ and $[L_2({\cal U}),R_2({\cal U})]$ 
there is an \invf\ of $\cal U$ starting there, where

\medskip
$L_1({\cal U})=max\ \{\ 1, N_1({\cal U})\-$\lcs{u}\ $\}$

$R_1({\cal U})=  N_1({\cal U})\+$\lcp{u}

$L_2({\cal U}) = N_2({\cal U})\-$\lcs{u}

$R_2({\cal U}) = min\ \{\ \send{U^2}\-2|u_1|\+1,N_2({\cal U})\+$\lcp{u}\ $\}$.

\medskip
\noindent 
If it is clear from the context, 
we  omit the $\cal U$ designation from 
$N_1({\cal U})$, $N_2({\cal U})$, $L_1({\cal U})$, $R_1({\cal U})$, $L_2({\cal U})$,  and $R_2({\cal U})$.
Note that $L_2\-L_1=R_2\-R_1=|U|$ and,  by Lemma~\ref{rot2}, $R_1\-L_1=R_2\-L_2\leq |u_1|\-2$. 
In addition, $L_1\leq R_1<\send{u_{[1]}}<\sbig{u_{[2]}}<\send{U^2}$ and
$\send{u_{[1]}}<\sbig{u_{[2]}}<L_2\leq R_2\leq \send{U^2}\-2|u_1|<\send{U^2}$.
A key fact is that besides the intervals 
$\s{\big[}L_1,R_1\s{\big]}$ and $\s{\big[}L_2,R_2\s{\big]}$, there are no further occurrences 
of an \invf\ in a \fds\ $\cal U$. In other words, all {\invf}s are cyclic shifts of the natural ones.

\begin{figure}
\leftskip=-20pt
\includegraphics[scale=0.8]{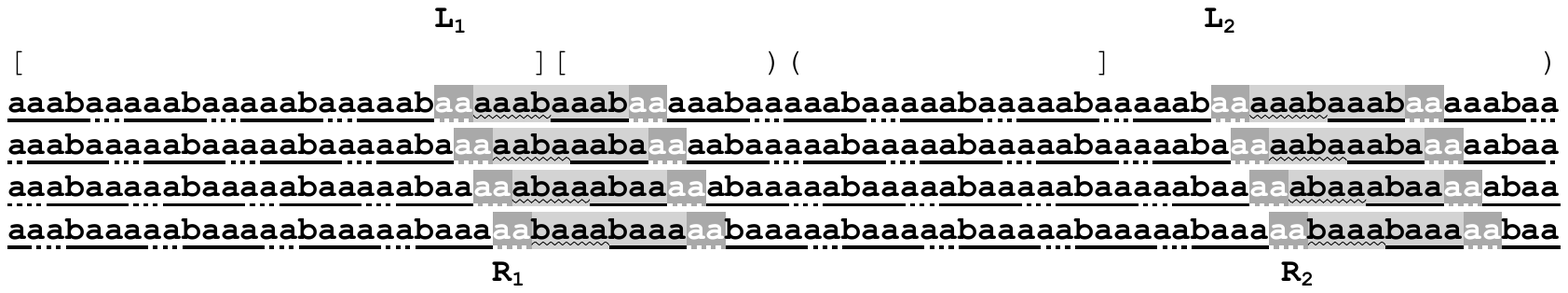}
\caption{Cyclic shifts of the {\invf} and its environment}
\end{figure}

\leftskip=0pt

See Figure~1 for an illustration where
$u_2=aaab$, $\o{u}_2=aa$,
$\p{U}=4$, and $\q{U}=2$. Consequently, $u_1=aaabaa$ and $\c{u}_1=aaaaab$, and so \lcp{u} = 3 and
\lcs{u} = 0. Thus, the \invf\
$\inv{u}=aaaaabaaabaa$ has three non-trivial right cyclic shifts
and no non-trivial left cyclic shift. Note that there are no 
other \invf{}s besides those highlighted. The configuration of brackets \verb+[  ][  ]+ indicates the shorter square while the configuration
\verb+[  )(  )+ indicates the longer square.
Also note that the environments
of the {\invf}s are shifted along: the \invf\  $\inv{v}$ is always preceded by $v_2$ (solid underline) alternating with $\o{v}_2$ (dotted underline). The leftmost piece of the environment, i.e. starting at the beginning of the string,  might just be a suffix of $v_2$ or $\o{v}_2$. Similarly,
the \invf\ $\o{v}_2v_2v_2\o{v}_2$ is always followed by $v_2$ alternating with $\o{v}_2$. The rightmost piece of
the environment, ending at the end  of the string $U^2$,  might just be a prefix of $v_2$ or $\o{v}_2$. 

\begin{lemma}[Inversion factor Lemma]\label{invfactor}
An \invf\ of a \fds\ $\cal U$ within the string $U^2$ starts at a position $i$ if 
and only if  $i\in \s{\big[}L_1({\cal U}),R_1({\cal U})\s{\big]} \cup \s{\big[}L_2({\cal U}),R_2({\cal U})\s{\big]}$.
\end{lemma}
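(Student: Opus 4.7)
The plan is to prove the two implications of the iff separately. The sufficiency ($\Leftarrow$) is a direct verification that cyclic shifts of the natural inversion factors at $N_1$ and $N_2$ remain inversion factors throughout $\s{\big[}L_1,R_1\s{\big]}\cup\s{\big[}L_2,R_2\s{\big]}$, while the necessity ($\Rightarrow$) proceeds by a case analysis on the position of the $2|u_1|$-window inside $U^2$.

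For sufficiency I first use $u_1=u_2\o{u}_2$ to rewrite
\[U^2=u_1^{\p{U}-1}u_2[\o{u}_2 u_2 u_2\o{u}_2]u_1^{\p{U}+\q{U}-2}u_2[\o{u}_2 u_2 u_2\o{u}_2]u_1^{\q{U}-1},\]
which exposes the natural inversion factor $\c{u}_1 u_1=\o{u}_2 u_2 u_2\o{u}_2$ at both $N_1$ and $N_2$. Shifting the window at $N_1$ to the right by $k\ge 1$ positions and matching the resulting $2|u_1|$-factor against the defining form $\o{v}_2 v_2 v_2\o{v}_2$ produces a single elementary system of character equations which, after substituting the above factorization, collapses to $u_1[1..k]=\c{u}_1[1..k]$, i.e.\ $k\le$ \lcp{u}; the symmetric argument for left shifts yields $-k\le$ \lcs{u}. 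The clippings $L_1\ge 1$ and $R_2\le\send{U^2}-2|u_1|+1$ merely ensure the window fits inside $U^2$, and the same computation repeated verbatim around $N_2$ delivers the interval $\s{\big[}L_2,R_2\s{\big]}$.

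For necessity, suppose an inversion factor starts at some position $i$. The first observation rules out most candidate positions: the window of length $2|u_1|$ cannot lie entirely inside any block $u_1^m$ of $U^2$, since then both halves of the window would be the same rotation of the primitive $u_1$, hence equal, contradicting the inversion-factor requirement that the two halves be related by the cyclic shift by $|u_2|$ positions (with $0<|u_2|<|u_1|$), since by \Sync\ the primitive $u_1$ cannot equal any of its nontrivial rotations. A simple length computation also rules out the window containing both $u_2$-insertions of $U^2$, since the distance from the first position of the first insertion to the last position of the second is $(\p{U}+\q{U})|u_1|+2|u_2|>2|u_1|$. Hence the window contains exactly one $u_2$-insertion, and I parametrize $i=N_1+k$ or $i=N_2+k$ accordingly.

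With this parametrization I express every character of $U^2$ inside the window in terms of $u_1$, $u_2$, and $\o{u}_2$ using the factorization above, split according to how the window straddles its $u_2$-insertion (purely from the left, properly straddling, or purely from the right), and substitute into the two defining equations of an inversion factor. After routine simplifications the system collapses, when $k\ge 0$, to $u_1[1..k]=\c{u}_1[1..k]$, i.e.\ $k\le$ \lcp{u}; and, when $k\le 0$, to the analogous length-$(-k)$ suffix equation, i.e.\ $-k\le$ \lcs{u}; these are exactly the conditions placing $i$ in $\s{\big[}L_1,R_1\s{\big]}\cup\s{\big[}L_2,R_2\s{\big]}$. The main obstacle I foresee is the boundary subcase in which the window straddles its $u_2$-insertion only marginally (with most of the window lying on one side): there the direct matching leaves a mixed period condition on $u_1$, and I plan to combine \ComFactor\ with Lemma~\ref{rot2} to force the implied agreement of $u_1$ with its rotation $\c{u}_1$ to stay within the bound $|u_1|-2$, which in turn confines $|k|$ to the allowed range and preserves the disjointness of $\s{\big[}L_1,R_1\s{\big]}$ and $\s{\big[}L_2,R_2\s{\big]}$.
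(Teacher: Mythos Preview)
Your approach is sound and genuinely different from the paper's. The paper anchors the reference inversion factor at the \emph{extremal} position $L_1$ (respectively $R_1$, $L_2$, $R_2$), where $lcs(w_2\o{w}_2,\o{w}_2w_2)=0$ (respectively $lcp=0$), and then runs a pictorial case analysis (five cases per boundary) on where a hypothetical further inversion factor $\o{v}_2v_2v_2\o{v}_2$ ends or starts relative to $\o{w}_2w_2w_2\o{w}_2$; in each case they trace a short string $s_1$ through two or three copies to exhibit it as a common suffix (or prefix) of $w_2$ and $\o{w}_2$, contradicting the extremality. Your route instead anchors at the \emph{natural} positions $N_1,N_2$, kills the purely periodic windows by primitivity of $u_1$, and reduces the remaining windows at $N_j+k$ to the single equation $u_1[1..k]=\c{u}_1[1..k]$ (or its suffix analogue). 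This is more algebraic and, once the reduction is written out, yields both directions at once with a single parameter rather than twenty diagram cases. Two remarks: first, ``contains exactly one $u_2$-insertion'' should be ``overlaps exactly one'' (the window may start inside the insertion), though your parametrization already covers that range of $k$; second, your anticipated obstacle in the ``marginal straddle'' subcase does not materialise---the reduction to $u_1[1..k]=\c{u}_1[1..k]$ goes through uniformly for all $k$ in the overlap range (in particular for $|\o{u}_2|<k<|u_1|$), so the appeal to the Common Factor Lemma and Lemma~\ref{rot2} is unnecessary there; Lemma~\ref{rot2} is only needed at the very end to bound $lcp+lcs\le|u_1|-2$ and hence separate the two intervals.
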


\noindent
The rather technical proof of Lemma~\ref{invfactor} is given in Section~\ref{proof-invfactor}.

\section{Inversion factors and the problem of distinct squares}
\label{applications}

When computing the number of distinct squares, one must consider just one representative occurrence from all occurrences of each square. \FrSi~\cite{FS98} consider only the last, i.e., the rightmost 
occurrence. We consider the same context and thus will be
investigating \fsds{s}. Let us recall that \fsds{s} are factorizable which  follows from Lemma~\ref{ds}~(c).
\FrSi's theorem states that at most two rightmost occurring squares can start at the same position using Lemma~\ref{CR+FS}:

\begin{lemma}[\CrRy~\cite{CR95}, \FrSi~\cite{FS98}]
\label{CR+FS}
Let $u^2$, $v^2$, and $w^2$ be squares in a string $x$ starting at the same position such that $|u|<|v|<|w|$ and with $u$ primitive, 
then $|w|\geq |u|\+|v|$.
\end{lemma}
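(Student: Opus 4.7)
The plan is to argue by contradiction, assuming $|w|<|u|+|v|$. Since the three squares $u^2$, $v^2$, $w^2$ share a starting position and $|u|<|v|<|w|$, one has the prefix chain $u\prec v\prec w$. Working inside $w^2$, the factor $v$ occurs at position $|v|+1$ (as the second half of $v^2\prec w^2$) and at position $|w|+1$ (as the prefix of the second $w$ in $w^2$); these occurrences are at distance $|w|-|v|<|u|<|v|$, so they overlap, forcing $v$ to have period $p:=|w|-|v|$. Since $u$ is a prefix of $v$ with $|u|\ge p$, $u$ also has period $p$.

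I would then split into cases on the comparison of $|v|$ with $2|u|$. If $|v|\ge 2|u|$, then $u^2$ lies entirely inside $v$ and so has period $p$ alongside its trivial period $|u|$. Since $p<|u|$, Fine and Wilf's theorem applies, giving $u^2$ the period $d:=\gcd(p,|u|)<|u|$, a divisor of $|u|$, and hence $u=(u[1..d])^{|u|/d}$, contradicting the primitivity of $u$. If instead $|v|<2|u|$, then $(u,v)$ is a balanced double square with $u$ primitive, and Lemma~\ref{ds} via condition (a) supplies the unique canonical factorization $u=u_1^{e_1}u_2$, $v=u_1^{e_1}u_2 u_1^{e_2}$ with $u_1$ primitive, $u_2$ a non-trivial proper prefix of $u_1$, $v$ primitive, and $e_1\ge e_2\ge 1$.

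Within this second case I would further split according to whether $|w|<2|u|$ or $|w|\ge 2|u|$. If $|w|<2|u|$, then $(u,w)$ is also a balanced double square; by the uniqueness statement in Lemma~\ref{ds} this forces $w=u_1^{e_1}u_2 u_1^{e_2'}$ with some $e_2'>e_2$. A position-by-position comparison of $v^2$ and $w^2$ past index $|v|+|u|$ then shows that $v^2\prec w^2$ requires $u_1$ to equal its cyclic rotation by $|u_2|$ positions; since $0<|u_2|<|u_1|$ this is a non-trivial rotation, contradicting the Synchronization Lemma~\ref{syncpr} applied to the primitive $u_1$. If $|w|\ge 2|u|$, then $u^2$ lies inside $w$, and an overlap analysis of $u^2\prec v^2$ using $|v|<2|u|$ yields a second period $q:=|v|-|u|<|u|$ for $u$. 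Combining $p$ and $q$ via Fine and Wilf, while leveraging the canonical factorization to verify the hypothesis, yields a proper divisor-period of $|u|$, again contradicting primitivity.

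The main obstacle I foresee is the final sub-case $|v|<2|u|\le|w|$, where Fine and Wilf's condition $p+q\le|u|+\gcd(p,q)$ does not hold automatically and one must exploit the combined structural constraints on $u$, $v$, and $w$ provided by Lemma~\ref{ds} to close the argument. This is in essence the classical contribution of Crochemore--Rytter~\cite{CR95} and Fraenkel--Simpson~\cite{FS98}, whose published proofs handle these delicate alignments via careful periodic bookkeeping.
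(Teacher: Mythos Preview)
The paper does not prove Lemma~\ref{CR+FS}; it states the result with attribution and remarks that ``one could prove Lemma~\ref{CR+FS} using the inversion factor Lemma~\ref{invfactor}'', then bypasses it entirely by proving Theorem~\ref{2n} directly. So there is no in-paper proof to compare against line by line. The route the paper points to is structurally different from yours: once $|v|<2|u|$, the pair $(u,v)$ is factorizable via Lemma~\ref{ds}(a), and Lemma~\ref{invfactor} pins down exactly where inversion factors of $(u,v)$ can sit inside $v^2$, which constrains where a copy of the inversion factor carried by $w_{[2]}$ can land --- this is precisely the mechanism exhibited in the proof of Theorem~\ref{2n}. Your approach is instead the classical periodicity/Fine--Wilf route of the original references~\cite{CR95,FS98}.

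Your outline has two genuine gaps. First, in the sub-case $|w|<2|u|$ you invoke ``the uniqueness statement in Lemma~\ref{ds}'' to conclude that the factorization of $(u,w)$ uses the \emph{same} $u_1,u_2,e_1$ as that of $(u,v)$. But the uniqueness in Lemma~\ref{ds} concerns a single balanced double square; it does not assert that two double squares $(u,v)$ and $(u,w)$ sharing the shorter generator $u$ produce identical $u_1,u_2,e_1$. The remark following Lemma~\ref{uu-unique-factorization} exhibits a string $u$ admitting two distinct decompositions $u_1^{e_1}u_2$ when one exponent equals~$1$, so this step needs a separate argument. Second --- and you say so yourself --- the sub-case $|v|<2|u|\le|w|$ is not closed: the two periods $p=|w|-|v|$ and $q=|v|-|u|$ of $u$ satisfy $p+q=|w|-|u|\ge|u|$, so Fine--Wilf on $u$ alone does not apply, and you defer the remaining alignment work to the published proofs rather than supply it. As it stands, the proposal is an outline with one unjustified step and one admitted gap rather than a complete proof; the inversion-factor route the paper suggests handles all sub-cases uniformly once Lemma~\ref{invfactor} is in hand.
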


\noindent
Though one could prove Lemma~\ref{CR+FS} using the inversion factor Lemma~\ref{invfactor}, we follow Ilie~\cite{I05} and prove Theorem~\ref{2n} directly.

\begin{theorem}[\FrSi~\cite{FS98}, Ilie~\cite{I05}]
\label{2n}
At most two rightmost squares can start at the same position.
\end{theorem}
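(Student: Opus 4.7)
My plan is to suppose, for contradiction, that three rightmost squares $u^2$, $v^2$, $w^2$ start at the same position of $x$ with $|u|<|v|<|w|$, and to derive an impossible commutation between a primitive string and one of its non-trivial proper prefixes. I first argue that $|v|<2|u|$ and $|w|<2|u|$: if $|v|\geq 2|u|$, then $u^2$ is a prefix of $v$, so the second occurrence of $v$ inside $v^2$ carries with it a copy of $u^2$ starting at position $|v|+1>1$, contradicting the rightmostness of $u^2$; an identical argument applies to $w$. Thus $|u|<|v|<|w|<2|u|$, so each of $(u,v)$, $(u,w)$, $(v,w)$ is a \bds, and since all three squares are rightmost in $x$, each pair is a \fsds, hence a \fds\ by Lemma~\ref{ds}(c).

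Applying Lemma~\ref{ds} to $(u,w)$ yields the decomposition $u=u_1^p u_2$ and $w=u_1^p u_2 u_1^q$ with $u_1$ primitive, $u_2$ a non-trivial proper prefix of $u_1$, and $p\geq q\geq 1$. Applying Lemma~\ref{ds} to $(u,v)$ similarly yields $u=\tilde u_1^{\tilde e_1}\tilde u_2$ and $v=\tilde u_1^{\tilde e_1}\tilde u_2 \tilde u_1^{\tilde e_2}$. The crux is to align these two decompositions of $u$: I will show that $\tilde u_1=u_1$, $\tilde u_2=u_2$, and $\tilde e_1=p$. When both $p,\tilde e_1\geq 2$ this is immediate from Lemma~\ref{uu-unique-factorization}. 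The degenerate subcases where $p=1$ or $\tilde e_1=1$ I will dispatch by comparing the $u_1$- and $\tilde u_1$-periodicities of $u$ via the Common factor Lemma~\ref{comfactor} together with the primitivity of both $u_1$ and $\tilde u_1$; the subcase where $u$ is itself non-primitive is handled separately by Lemma~\ref{nonprimitiveds}, which forces $p=q=\tilde e_1=\tilde e_2=1$ and leaves only a direct inspection.

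With the decompositions aligned, $v=u_1^p u_2 u_1^{\tilde e_2}$ and $w=v\cdot u_1^{q-\tilde e_2}$ with $\tilde e_2<q$ (since $|v|<|w|$). I then apply Lemma~\ref{ds} to the \fsds\ $(v,w)$: the suffix of $w$ beyond $v$ is $u_1^{q-\tilde e_2}$, a non-trivial power of the primitive $u_1$, so by uniqueness of primitive roots the primitive string in that factorization must be $u_1$ itself, and matching lengths then forces the non-trivial proper prefix to be $u_2$ (since any two prefixes of $u_1$ of the same length coincide), giving $v=u_1^{p+\tilde e_2}u_2$. Equating with $v=u_1^p u_2 u_1^{\tilde e_2}$ and cancelling the common prefix $u_1^p$ yields $u_1^{\tilde e_2}u_2=u_2 u_1^{\tilde e_2}$. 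Since commuting strings are powers of a common word and $u_1$ is primitive, this would force $u_2$ to be a power of $u_1$; but $0<|u_2|<|u_1|$ rules this out, yielding the desired contradiction.

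The hardest part will be the Step~2 alignment of the two decompositions of $u$, specifically the low-exponent degenerate cases where Lemma~\ref{uu-unique-factorization} does not apply directly and one must invoke Lemma~\ref{comfactor} and Lemma~\ref{nonprimitiveds} to identify $\tilde u_1$ with $u_1$; once that alignment is in hand, the commutation argument above proceeds without further difficulty.
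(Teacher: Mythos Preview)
Your approach is genuinely different from the paper's. The paper proves Theorem~\ref{2n} via the Inversion Factor Lemma~\ref{invfactor}: since $v_{[1]}$ contains an \invf\ of $\cal U$ from $[L_1,R_1]$, so must $v_{[2]}$; if that second \invf\ lies in $[L_2,R_2]$ one gets $|v|=|U|$, and otherwise $v$ is long enough that $v_{[2]}$ carries a further copy of $u^2$, contradicting rightmostness. Your route instead applies the factorization of Lemma~\ref{ds} to all three pairs $(u,v)$, $(u,w)$, $(v,w)$ and extracts the commutation $u_2u_1^{\tilde e_2}=u_1^{\tilde e_2}u_2$. This is more elementary in that it sidesteps the long case analysis of Lemma~\ref{invfactor}; and once the two factorizations of $u$ are aligned, your endgame is clean (indeed, after alignment one could also finish immediately with Lemma~\ref{canon1}: from $(v,w)$ you get $v=u_1^{p+\tilde e_2}u_2$, while Lemma~\ref{canon1} applied to $(u,v)$ forbids $v=u_1^{j}u_2$).

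The one soft spot is the alignment step in the degenerate case $p=1$ (equivalently $\tilde e_1=1$). You propose to dispatch it with Lemma~\ref{comfactor}, but that lemma needs a common factor of $u_1^{\,2}$ and $\tilde u_1^{\,2}$ of length $|u_1|+|\tilde u_1|$; when $p=1$ one only has $|u|=|u_1|+|u_2|$, and if $|\tilde u_1|>|u_2|$ the string $u$ alone is too short to furnish such a factor. The fix is to look past $u$: since $v^2$ is a prefix of $w^2$, at position $|u|+1$ both $u_1^{\,2}$ (from $w^2=uu_1uu_1$) and $\tilde u_1^{\,\tilde e_1+\tilde e_2}$ (from $v^2=u\tilde u_1^{\tilde e_2}u\tilde u_1^{\tilde e_2}$, using $u=\tilde u_1^{\tilde e_1}\tilde u_2$) begin, and together with the periodicity of $u$ this does supply a long enough common factor—though one still has to check the sub-subcase $\tilde e_1=\tilde e_2=1$ separately. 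Your invocation of Lemma~\ref{nonprimitiveds} for non-primitive $u$ is apt, but note it only covers the non-primitive case; the ``$p=1$, $u$ primitive'' case is not automatically absorbed by it. In short: your plan is correct and the strategy is sound, but the degenerate-exponent alignment deserves a fuller argument than the one-line appeal to Lemma~\ref{comfactor} you give.
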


\begin{proof}
Let us assume by contradiction that three rightmost  squares
start at the same position: $u^2$, $U^2$, and $v^2$ such that
$|u|<|U|<|v|$. By item $(c)$ of Lemma~\ref{ds}, $u^2$ and $U^2$ form a \fds\ $\cal U$
and so $u={u_1}^{\p{U}}u_2$ and $U={u_1}^{\p{U}}u_2{u_1}^{\q{U}}$.
Since $v_{[1]}$ contains an inversion factor, $v_{[2]}$ must also 
contain an inversion factor. If the \invf\ in $v_{[2]}$ were from $[L_2,R_2]$, then
$|v|=|U|$, a contradiction.
Hence $v_{[2]}$ must not contain an inversion factor from $[L_2,R_2]$ and so 
${u_1}^{\p{U}}u_2{u_1}^{\p{U}\+\q{U}\-1}u_2$ must be a prefix of $v$. Therefore
$v_{[2]}$ contains another copy of  ${u_1}^{\p{U}}u_2{u_1}^{\p{U}}u_2=u^2$,  contradicting
the assumption that $u^2$ is a rightmost square.
\end{proof}

\vspace{-10pt}
We often need to investigate the mutual configuration of the shorter squares of two \fds{s}.
\begin{definition}
For two substrings $u$ and $v$ of a string $x$ such that\linebreak
$\sbig{u}<\sbig{v}$,
the \emph{gap} $G(u,v)$ is defined as $\sbig{v}\-\sbig{u}$ and the \emph{tail} $T(u,v)$
is defined as $\send{v}\-\send{u}$.
For two \fds{s} $\cal U$ and $\cal V$ such that\linebreak
$\sbig{{\cal U}}<\sbig{{\cal V}}$, the \emph{gap} $G({\cal U},{\cal V})=G(u,v)$
and the \emph{tail} $T({\cal U},{\cal V})=T(u,v)$.
\end{definition}

Note that $T(u,v)$ could be negative when $\send{v}<\send{u}$. If $T(u,v)\geq 0$,
then $G(u,v)v=uT(u,v)$. If it is clear from the context, we will drop the reference 
to $u$ and $v$ or $\cal U$ and $\cal V$ and use just $G$ and $T$.
Lemma~\ref{v-cases} investigates configurations consisting of an \fsds\ and a single rightmost square. In essence it says that if we have
an \fsds\, then the types and starting positions for a possible rightmost square $v^2$ are highly constraint.  
Lemma~\ref{v-cases} is needed for Lemma~\ref{vv-cases} discussing configurations of two {\fsds}s.

\onehalfspacing
\begin{lemma}
\label{v-cases}
Let $x$ be a string starting with an \fsds\ $\cal U$. Let $v^2$ be a rightmost occurrence in $x$. Then

\vspace{-10pt}
\begin{my_itemize}
\leftskip=-7pt
\item[$(a)$] If $\sbig{v_{[1]}}<R_1({\cal U})$, then there are the
following possibilities for $v^2$:

\vspace{-5pt}
\begin{my_itemize}
\leftskip=-20pt
\item[$(a_1)$] $|v| < |u|:$  in which case $v={{\widehat u}_1}\np{j}{\widehat u}_2$ for
some $1\leq j< \p{U}$ where ${{\widehat u}_2}$ is a non-trivial
proper prefix of 
${{\widehat u}_1}$ and ${\widehat{u}_1}$ respective $\widehat{u}_2$ is a cyclic shift of $u_1$ respective $u_2$ by the same number of positions in the  same direction;
\item[$(a_2)$] $|v| = |u|:$  in which case $v={{\widehat u}_1}^\p{U}{\widehat u}_2$
where ${{\widehat u}_2}$ is a non-trivial proper prefix of ${{\widehat u}_1}$ and ${\widehat{u}_1}$ respective $\widehat{u}_2$  is a cyclic shift of $u_1$ respective $u_2$ by the same number of positions in the same direction;
\item[$(a_3)$] $|u| < |v| < |U|:$ is impossible;
\item[$(a_4)$]  $|v|=|U|:$ in which case $T(u,v)\geq 0$;
\item[$(a_5)$] $|v|>|U|:$ in which case $T(u,v)\geq 0$ and either $s_1\o{u}_2u_2{u_1}^{(\p{U}\+\q{U}\-1)}{u_2}$ is a prefix of $v$ for some
suffix $s_1$ of $u_2$, or $s_1{u_1}^iu_2{u_1}^{(\p{U}\+\q{U}\-1)}{u_2}$ is
a prefix of $v$ for some suffix $s_1$ of $u_1$ and some $i\geq 1$.

\end{my_itemize}
\item[$(b)$] If $\send{v_{[1]}}\leq \send{u_{[1]}}$, then $\sbig{v_{[1]}}<R_1({\cal U})$ and either $(a_1)$ or $(a_2)$ holds.
\end{my_itemize}
\end{lemma}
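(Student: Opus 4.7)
Set $p=\sbig{v_{[1]}}$; since $\cal U$ starts at position $1$ we have $G=p-1\ge 0$ and $T=G+|v|-|u|$. The inequality $R_1({\cal U})<\send{u_{[1]}}=|u|$ recorded just after Lemma~\ref{invfactor} means the hypothesis $p<R_1$ already places $v_{[1]}$ strictly inside $u_{[1]}$. I would then split by the length $|v|$ and apply Lemma~\ref{invfactor} together with the Synchronization principle Lemma~\ref{syncpr}.

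For sub-cases $(a_1)$ and $(a_2)$ (both $|v|\le|u|$), the square $v^2$ lives inside the nearly $u_1$-periodic prefix $u_1^{\p{U}}u_2 u_1^{\p{U}}u_2$ of $U^2$; reading from position $p$, Lemma~\ref{syncpr} together with primitivity of $u_1$ forces the text to be a cyclic shift $\widehat u_1$ of $u_1$ repeated, with the next break being the corresponding shift $\widehat u_2$ of $u_2$, so $v=\widehat u_1^{\,j}\widehat u_2$. Length comparison pins $1\le j<\p{U}$ in $(a_1)$ and $j=\p{U}$ in $(a_2)$, and uniqueness of the shifts follows from primitivity of $u_1$. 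For $(a_3)$ I would proceed by contradiction: the inversion factor inside $v_{[1]}$ lies in $[L_1,R_1]$, so by Lemma~\ref{invfactor} its $+|v|$-translate inside $v_{[2]}$ must lie in $[L_1,R_1]\cup[L_2,R_2]$; the first option is excluded because $|v|>|u|\ge|u_1|>R_1-L_1$, and the second together with $L_2-L_1=R_2-R_1=|U|$ forces $|v|=|U|$, contradicting $|v|<|U|$.

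For $(a_4)$ and $(a_5)$, $T\ge 0$ is immediate from $G\ge 0$ and $|v|\ge|U|>|u|$. For the prefix structure in $(a_5)$, since $|v|>|U|$ the shifted inversion factor inside $v_{[2]}$ cannot sit in $[L_1,R_1]$ and must lie in $[L_2,R_2]$; unwinding the canonical decomposition $U^2=u_1^{\p{U}}u_2\,u_1^{\p{U}+\q{U}-1}u_2\,\o u_2 u_2 u_2\o u_2\,u_1^{\q{U}-1}$, I would distinguish whether $\sbig{v_{[2]}}$ lands inside the $u_2$ immediately preceding the inversion factor (giving the form $s_1\o u_2 u_2\,u_1^{\p{U}+\q{U}-1}u_2$ with $s_1$ a suffix of $u_2$) or inside a complete $u_1$-block further left (giving $s_1 u_1^i u_2\,u_1^{\p{U}+\q{U}-1}u_2$ with $s_1$ a suffix of $u_1$ and $i\ge 1$). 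For part (b), $\send{v_{[1]}}\le\send{u_{[1]}}$ gives $v_{[1]}\subseteq u_{[1]}$ and hence $|v|\le|u|$ with $p\le|u|-|v|+1<R_1$, landing in $(a_1)$ or $(a_2)$.

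I expect the main obstacle to be the detailed bookkeeping in $(a_5)$: tracking the exact position of the shifted inversion factor inside $v_{[2]}$ across all cyclic variants in $[L_2,R_2]$ and translating the alignment of $\sbig{v_{[2]}}$ with the canonical decomposition into the two advertised prefix forms. A secondary subtlety in $(a_3)$ is ensuring that $v_{[1]}$ really contains a full inversion factor when $\p{U}=1$ and $|v|$ is only marginally larger than $|u|$; this should be handled either by choosing the inversion-factor position $q_1\in[L_1,R_1]$ close to $p$ rather than fixed at $N_1$, or by a separate direct argument exploiting the very constrained shape of $U^2$ when $\p{U}=1$.
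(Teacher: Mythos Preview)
Your overall strategy---split by $|v|$ and invoke the inversion-factor Lemma~\ref{invfactor}---is close in spirit to the paper's, but the paper organizes the case analysis by the value of $\send{v_{[1]}}$ rather than by $|v|$, and this difference is not cosmetic. Two concrete gaps remain.

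\textbf{The range where $v_{[1]}$ is too short to contain an inversion factor.} Your worry about $(a_3)$ is real, not a mere bookkeeping subtlety. An inversion factor has length $2|u_1|$, so if $|v|<2|u_1|$ then $v_{[1]}$ cannot contain one, regardless of which $q\in[L_1,R_1]$ you choose. This actually occurs: when $\p{U}=1$ one has $|u|=|u_1|+|u_2|<2|u_1|<|U|$, so every $|v|$ with $|u|<|v|<2|u_1|$ falls into $(a_3)$ yet defeats your argument. The paper handles exactly this range as a separate case $\send{u_{[1]}}<\send{v_{[1]}}\le\send{u_{[1]}u_1}$, splitting into four configurations according to whether $v_{[1]}$ begins in a $u_2$-block or an $\o u_2$-block and whether it ends in the first $u_2$ or the first $\o u_2$ of $u_{[2]}$; each configuration is resolved by a direct application of Lemma~\ref{syncpr}, not by inversion factors. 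Only once $\send{v_{[1]}}>\send{u_{[1]}u_1}$ is $v_{[1]}$ guaranteed to contain the inversion factor at $R_1$, and then your translate argument applies---though note that to conclude $|v|=|U|$ rather than merely $|v|\in\big[|U|-(R_1{-}L_1),\,|U|+(R_1{-}L_1)\big]$ you also need that distinct cyclic shifts of $u_1$ yield distinct inversion-factor strings, so the translate must land precisely at $R_2$.

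\textbf{Part $(b)$.} Your chain $p\le |u|-|v|+1<R_1$ is false in general: when $|v|$ is small (for instance $|v|<|u_1|-lcp(u_1,\c u_1)$) the quantity $|u|-|v|+1$ exceeds $R_1$. The paper's argument is different and uses the rightmost hypothesis, which you never invoke: if $\sbig{v_{[1]}}\ge R_1$ while $\send{v_{[1]}}\le\send{u_{[1]}}$, then $|v|<|u_1|$, so $v^2$ is a factor of $u_1u_2u_1\subset U_{[1]}$ and therefore has a farther copy inside $U_{[2]}$, contradicting that $v^2$ is a rightmost occurrence.
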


\singlespacing
Definition~\ref{def-rels} formalizes the types of relationship implied by 
Lemma~\ref{v-cases}. 

\onehalfspacing
\begin{definition}
\label{def-rels}
We say that an \fsds\ $\cal V$ is a \emph{mate} of an \fsds\ $\cal U$ in a string $x$,
if $\sbig{{\cal U}}<\sbig{{\cal V}}$.

\vspace{-5pt}
\begin{my_enumerate}
\leftskip=-10pt
%alpha-mate
\item $\cal V$ is an \emph{\amate} of $\cal U$ if $\sbig{{\cal V}}\leq \sbig{{\cal U}}\+$\lcp{u} and  $\cal V$ is a right cyclic shift of $\cal U$.
%beta-mate
\item $\cal V$ is a \emph{\bmate} of $\cal U$ if $\sbig{{\cal V}} < \send{v_{[1]}}<\send{u_{[1]}}$ and
$v={{\widehat u}_1}\np{i}{{\widehat u}_2}$ for some $1<i<\p{U}$ where
${{\widehat u}_2}$ is a non-trivial prefix of ${{\widehat u}_1}$ and where ${{\widehat u}_1}$ respective ${{\widehat u}_2}$ is a cyclic
shift of $u_1$ respective $u_2$ in the
same direction by the same number of positions, and $V^2$ is a 
right cyclic shift of $U^2$ by $\sbig{{\cal V}}\-\sbig{{\cal U}}$ positions.
%gamma-mate
\item $\cal V$ is a \emph{\gmate} of $\cal U$  if 
$\sbig{{\cal V}}<\sbig{{\cal U}}\+\p{U}|u_1|$ and $|v|=|U|$.
%delta-mate
\item $\cal V$ is a \emph{\dmate} of $\cal U$ if $\sbig{{\cal V}}<R_1({\cal U})$
and $|v|>|U|$ and either\newline
$s_1\o{u}_2u_2{u_1}^{(\p{U}\+\q{U}\-1)}{u_2}$ is a non-trivial
prefix 
of $v$ for some suffix $s_1$ of $u_2$, or
$s_1{u_1}^iu_2{u_1}^{(\p{U}\+\q{U}\-1)}{u_2}$ 
is a non-trivial
prefix of $v$ for some $s_1$ suffix of $u_1$ and some $i\geq 1$.
%epsilon-mate
\item $\cal V$ is an \emph{\emate} of $\cal U$ if $R_1({\cal U})\leq \sbig{{\cal V}}$.
If, in addition, $\send{u_{[1]}}<\sbig{{\cal V}}$, we will call $\cal V$ a \emph{\eemate}.
\end{my_enumerate}
\end{definition}

\vskip -10pt
\singlespacing
Note that Definition~\ref{def-rels} implies that an \amate\ of an \amate\ of $\cal U$
is an \amate\ of $\cal U$; an \amate\ of a \bmate\ of $\cal U$ is \bmate\ of $\cal U$;
a \bmate\ of a \bmate\ of $\cal U$ is a \bmate\ of $\cal U$; if $\cal V$ is \bmate\
of $\cal U$, then $|U|=|V|$, $V={{\widehat u}_1}\np{i}{{\widehat u}_2}{{\widehat u}_1}^{(\p{U}\+\q{U}\-i)}$, and $\p{U}\-\q{U}\geq 2$ since $i\geq \p{U}\+\q{U}\-i$. If $\cal V$ is a \gmate\ of $\cal U$, then $v^2$ is right cyclic shift 
of $U^2$.

\begin{lemma}
\label{vv-cases}
Let $x$ be a string starting with an \fsds\ $\cal U$. Let $\cal V$ be an \fsds\ 
with $\sbig{{\cal U}} < \sbig{{\cal V}}$, then either

\vspace{-10pt}
\begin{my_itemize}
\leftskip=-7pt
\item[$(a)$] ${\sbig{\cal V}}<R_1(\cal U)$,  in which case either

\vspace{-5pt}
\begin{my_itemize}
\leftskip=-14pt
\item[$(a_1)$] $\cal V$ is an \amate\ of $\cal U$, or
\item[$(a_2)$] $\cal V$ is a \bmate\ of $\cal U$ and $\p{U}> \q{U}\+1$,  or
\item[$(a_3)$] $\cal V$ is a \gmate\ of $\cal U$, or
\item[$(a_4)$]  $\cal V$ is a \dmate\ of $\cal U$,
\end{my_itemize}
\item[] or
\item[$(b)$] $R_1(\cal U)\leq {\sbig{\cal V}}$, then
\begin{my_itemize}
\leftskip=-14pt
\item[$(b_1)$] $\cal V$ is a \emate\ of $\cal U$ and $\send{v_{[1]}}>\send{u_{[1]}}$.
\end{my_itemize}
\end{my_itemize}
\end{lemma}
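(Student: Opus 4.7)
The natural strategy is to apply Lemma~\ref{v-cases} to the shorter square $v^2$ of $\cal V$, which is a rightmost occurrence of a square in $x$ because $\cal V$ is an \fsds, and then translate each of its outputs into the matching mate type from Definition~\ref{def-rels}. The case split between $(a)$ and $(b)$ in the statement mirrors Lemma~\ref{v-cases} verbatim, so no extra work is needed to set it up. For case $(b)$, the inequality $R_1({\cal U})\leq \sbig{\cal V}$ is precisely the defining condition of \emate, while the supplementary strict inequality $\send{v_{[1]}}>\send{u_{[1]}}$ is obtained by contraposing part $(b)$ of Lemma~\ref{v-cases}.

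In case $(a)$, Lemma~\ref{v-cases}$(a)$ yields five possibilities. Sub-case $(a_3)$ is already impossible; $(a_5)$ matches the \dmate\ definition verbatim; and $(a_4)$, in which $|v|=|U|$, matches the \gmate\ definition, with the positional bound $\sbig{\cal V}<\sbig{\cal U}+\p{U}|u_1|$ to be extracted from $\sbig{\cal V}<R_1({\cal U})$ together with the alignment restriction forced by $|v|=|U|$ and by $\cal V$ being factorizable (Lemma~\ref{ds}). The substantive sub-cases are $(a_1)$ and $(a_2)$, in both of which $v$ is a cyclic-shift image of a prefix of $u$ by $k:=\sbig{\cal V}-\sbig{\cal U}$ positions. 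Since $\cal V$ is factorizable, Lemma~\ref{ds} provides $V=v_1^{\p{V}}v_2 v_1^{\q{V}}$ with $v_1=\widehat u_1$ and $v_2=\widehat u_2$, both being cyclic shifts of $u_1$ and $u_2$ by $k$ positions. The plan is to combine Lemma~\ref{syncpr} with the inversion factor Lemma~\ref{invfactor} applied to $\cal V$ to show that the $v_1$-periodic structure of $V^2$ must synchronize with the $u_1$-periodic structure of $U^2$ inside $x$, which forces $\p{V}+\q{V}=\p{U}+\q{U}$, hence $|V|=|U|$, and identifies $V^2$ as the right cyclic shift of $U^2$ by $k$ positions.

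With this alignment in place, sub-case $(a_2)$ gives $\p{V}=\p{U}$ and $\q{V}=\q{U}$, and the shift $k$ is bounded by \lcp{u} by the compatibility of the cyclic shift with the content of $x$, yielding \amate. In sub-case $(a_1)$, $\p{V}=j<\p{U}$ and $\q{V}=\p{U}+\q{U}-j$; the case $j=1$ would force $\p{U}=\q{U}=1$ via $\p{V}\geq\q{V}\geq 1$, contradicting $j<\p{U}$, so $j\geq 2$ follows. Combining $j\geq (\p{U}+\q{U})/2$ (coming from $\p{V}\geq \q{V}$) with $j\leq \p{U}-1$ yields $\q{U}\leq \p{U}-2$, i.e.\ $\p{U}>\q{U}+1$, which is precisely the \bmate\ condition.

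The main obstacle is the detailed verification in sub-cases $(a_1)$ and $(a_2)$ that $V^2$ is forced to coincide with the right cyclic shift of $U^2$, together with the derivation of the tight positional bounds required by the \amate\ and \gmate\ definitions. Both rely on a careful interplay between the uniqueness of factorization (Lemma~\ref{ds}), the Synchronization Principle (Lemma~\ref{syncpr}), and the exact positioning of inversion factors (Lemma~\ref{invfactor}), which together rule out any potential misalignment of $V^2$ relative to $U^2$.
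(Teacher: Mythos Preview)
Your overall strategy differs from the paper's: the paper applies Lemma~\ref{v-cases} \emph{twice}, once to $v^2$ and once to $V^2$, obtaining four possibilities for $v$ and three for $V$, and then checks all twelve combinations. Several of these are eliminated by Lemma~\ref{canon1} or by size considerations, and the crucial ones where $|v|<|U|$ but $|V|>|U|$ (your sub-cases $(a_1),(a_2)$ for $v$ combined with $(a_5)$ for $V$) are killed by observing that the $(a_5)$-structure of $V$ would force $V_{[1]}$ to contain a full copy of $v^2$, contradicting rightmost-ness. Your plan instead applies Lemma~\ref{v-cases} only to $v^2$ and tries to pin down $V$ from the factorization of $\cal V$ together with synchronization inside $U^2$.

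The gap is precisely in that last step. Once you know $v_1=\widehat u_1$, $v_2=\widehat u_2$, and $\p{V}=j$, the identity $V=\widehat u_1^{\,j}\widehat u_2\,\widehat u_1^{\,\q{V}}$ does let the $v_1$-period of $V^2$ line up with the $u_1$-period of $U^2$ wherever they overlap, but that alignment says nothing about how far $V^2$ extends \emph{beyond} $U^2$: any $\q{V}>\p{U}+\q{U}-j$ is perfectly compatible with the periodic structure of $x$ inside $U^2$, and the inversion-factor lemma applied to $\cal V$ only locates inversion factors of $\cal V$ inside $V^2$, which does not bound $\q{V}$ either. What actually rules out $|V|>|U|$ is the rightmost-ness of $v^2$: if $|V|>|U|$ then $V$ must satisfy the $(a_5)$-description of Lemma~\ref{v-cases}, and one checks (as the paper does) that $\widehat u_1^{\,j}\widehat u_2\,\widehat u_1^{\,j}\widehat u_2$ then sits entirely inside $V_{[1]}$, giving a farther copy of $v^2$ in $V_{[2]}$. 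Your proposal never invokes this, so as written it does not close the case. The cleanest fix is simply to apply Lemma~\ref{v-cases} to $V^2$ as well, which is exactly the paper's route and also dispatches the $|V|<|U|$ side via Lemma~\ref{canon1} in one line.
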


\noindent
The rather technical proofs of Lemmas~\ref{v-cases} and~\ref{vv-cases} are given, respectively, 
in Sections~\ref{proof-v-cases} and~\ref{proof-vV-cases}.

\subsection{Some properties of \gmate{s}}

\medskip
\noindent
Let an \fsds\ $\cal V$ be a \gmate\ of an \fsds\ $\cal U$. Then 
$v = s_2{u_1}^{\p{U}\-t\-1}u_2{u_1}^{\q{U}\+t}s_1$  or
$v = {u_1}^{\p{U}\-t}u_2{u_1}^{\q{U}\+t}$ for some $\p{U}\-t\geq 1$ and
some $s_1$, $s_2$ so that  $s_1s_2=u_1$. Let us define a \emph{type} of  $\cal V$:

\smallskip
\noindent
$type({\cal V}) = \begin{cases} 
(\p{U}\-t,\q{U}\+t) &\mbox{if }v = {u_1}^{\p{U}\-t}u_2{u_1}^{\q{U}\+t} \\
(\p{U}\-t,\q{U}\+t) &\mbox{if }s_2{u_1}^{\p{U}\-t\-1}u_2{u_1}^{\q{U}\+t}s_1\ \text{and}\\
\ & |s_1|\leq |u_1|\-lcs(u_1,\c{u}_1) \\
(\p{U}\-t-1,\q{U}\+t\+1) & \mbox{otherwise}. 
\end{cases}$

\medskip
\noindent
Though we do not know exactly what $V^2$ is like, we can still 
determine some of its properties.
\begin{lemma}
\label{gamma}
Let an \fsds\ $\cal V$ be a \gmate\ of an \fsds\ $\cal U$ of type $(p,q)$ where $p, q\geq 2$
and $p\+q\geq 4$. Then  $\p{V}=\q{V}$ and $|v_2|\leq min(p,q)|u_1|$.
Moreover, either $|v_2|<|u_1|$ or there is a factor $({u_1}^qu_2)({u_1}^qu_2)$ in $V^2$.
\end{lemma}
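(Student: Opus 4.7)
The plan is to determine explicitly the primitive factorization $v = {v_1}^{\p{V}} v_2$ guaranteed by Lemma~\ref{ds} and then read off all three conclusions from the resulting structure of $V = v {v_1}^{\q{V}}$. Since $\cal V$ is a \gmate\ of $\cal U$, $v^2$ is a right cyclic shift of $U^2$, so $v$ is itself a cyclic shift of $U = {u_1}^{\p{U}} u_2 {u_1}^{\q{U}}$. By the definition of type, $v$ of type $(p, q)$ must take one of three forms: the simple $v = {u_1}^p u_2 {u_1}^q$, or one of the twisted forms $v = s_2 {u_1}^{p-1} u_2 {u_1}^q s_1$ or $v = s_2 {u_1}^p u_2 {u_1}^{q-1} s_1$ with $s_1 s_2 = u_1$ and $s_1, s_2$ non-trivial.

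In each form, assuming WLOG $p \geq q$ (the $q > p$ case being handled analogously), I would propose explicit candidates: in the simple form, $v_1 = {u_1}^p u_2$ and $v_2 = {u_1}^q$; in the twisted forms, $v_1$ is the analogous cyclic shift of ${u_1}^p u_2$ (for instance $v_1 = s_2 {u_1}^{p-1} u_2 s_1$ in the first twisted case) and $v_2 = {\c{u}_1}^q$. In every case $|v_1| = \max(p,q)|u_1| + |u_2|$ and $|v_2| = \min(p,q)|u_1|$, and the verifications that $v = v_1 v_2$ and that $v_2$ is a non-trivial proper prefix of $v_1$ reduce to direct concatenation checks using $s_1 s_2 = u_1$. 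The primitiveness of $v_1$ in the simple form follows from a short Fine--Wilf argument using the primitiveness of $u_1$ together with $0 < |u_2| < |u_1|$ (paralleling the argument already used inside the proof of Lemma~\ref{ds}); in the twisted forms, $v_1$ is a cyclic shift of the simple $v_1$ and is therefore also primitive. By the uniqueness asserted in Lemma~\ref{ds}, the proposed candidate must be the factorization, so $\p{V} = 1$, $|v_1| = \max(p,q)|u_1|+|u_2|$, and $|v_2| = \min(p,q)|u_1|$, which settles the bound $|v_2| \leq \min(p,q)|u_1|$.

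For $\p{V} = \q{V}$ I would combine the balanced condition $|V| = |v| + \q{V}|v_1| < 2|v|$ with the computation $|v|/|v_1| = ((p+q)|u_1|+|u_2|)/(\max(p,q)|u_1|+|u_2|) < 2$ (strict because $|u_2|>0$) to force $\q{V} = 1 = \p{V}$. For the last claim, since $\min(p,q) \geq 2$ we have $|v_2| \geq 2|u_1| > |u_1|$, so the alternative $|v_2| < |u_1|$ fails and I must exhibit $({u_1}^q u_2)({u_1}^q u_2)$ as a factor of $V^2$. Using $V = v_1 v_2 v_1$, in the simple case with $p \geq q$ this gives $V = {u_1}^p u_2 {u_1}^{p+q} u_2$, and the desired factor occurs in $V$ starting at position $(p-q)|u_1|+1$: the last $q$ copies of $u_1$ inside ${u_1}^p$, the structural $u_2$, the first $q$ copies of $u_1$ inside ${u_1}^{p+q}$, and finally the $u_2$ appearing as the prefix of the $(q+1)$-st $u_1$ inside ${u_1}^{p+q}$ (which is $u_2$, since $u_2$ is a prefix of $u_1$). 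For $q > p$ the factor instead spans the junction of the two copies of $V$ in $V^2 = VV$, and the twisted forms are analogous up to an overall shift by $|s_2|$ positions.

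The main technical obstacle will be the twisted forms: verifying that the proposed $v_1$ has length $\max(p,q)|u_1|+|u_2|$ and is primitive via recognizing e.g.\ $s_2 {u_1}^{p-1} u_2 s_1$ as a cyclic shift of the primitive ${u_1}^p u_2$, and then pinpointing $({u_1}^q u_2)({u_1}^q u_2)$ in $V^2$ when the structural $u_2$'s have been shifted by $|s_2|$; the ``large twist'' sub-case of type $(p, q)$ with $|s_1| > |u_1| - lcs(u_1, \c{u}_1)$ in particular requires careful bookkeeping.
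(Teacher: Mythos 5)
Your proposal has a genuine gap: you treat the factorization $(v_1,v_2,\p{V},\q{V})$ as if it were determined by $v$ alone, but the uniqueness in Lemma~\ref{ds} is relative to the pair $(v,V)$, and the hypothesis that $\cal V$ is a $\gamma$-mate of $\cal U$ of type $(p,q)$ fixes only $v$ (as a right cyclic shift of $U$), not $V$. You propose $v_1={u_1}^pu_2$, $v_2={u_1}^q$, and then implicitly set $V=vv_1$ so that uniqueness forces the proposed data to be the factorization. But $V$ may well be longer. In the simple (untwisted) subcase $v={u_1}^pu_2{u_1}^q$, the extension $V[|v|\+1..|V|]={v_1}^{\q{V}}$ is a prefix of $v$ starting at a $u_1$-aligned position, so it can be ${u_1}^pu_2{u_1}^j$ for any $0\leq j\leq q\-1$; the corresponding factorization then has $|v_1|=(p\+j)|u_1|\+|u_2|$ and $|v_2|=(q\-j)|u_1|$, not $q|u_1|$. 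Your argument only handles $j=0$, and the subsequent deductions ($\p{V}=\q{V}=1$ and $|v_2|=q|u_1|$ exactly) are correspondingly over-specific. The statement to be proved is the weaker $\p{V}=\q{V}$ and $|v_2|\leq\min(p,q)|u_1|$, and the paper achieves it by showing, for an arbitrary admissible start of $V_{[2]}$, that $|v_1|>\max(p,q)|u_1|$ (via Lemma~\ref{comfactor} inside a contradiction argument on $\q{V}\geq 2$) while $v_2$ must be a factor of ${u_1}^{\min(p,q)}$, and it explicitly notes that if $V_{[2]}$ begins further right this only makes $v_2$ smaller and ${v_1}^{\q{V}}$ bigger, so the same argument applies.

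A telling symptom of the gap is your last step: from $|v_2|=\min(p,q)|u_1|$ and $\min(p,q)\geq 2$ you infer $|v_2|\geq 2|u_1|$, concluding that the alternative $|v_2|<|u_1|$ in the statement can never occur. That alternative is not vacuous; it arises precisely in the twisted subcases where $v$ is shifted and the extension ${v_1}^{\q{V}}$ can leave a $v_2$ shorter than $|u_1|$. So the disjunction in the lemma is there for a reason, and a correct proof must accommodate the full range of possible $V$, not a single canonical one. The ideas you use for the final factor $({u_1}^qu_2)({u_1}^qu_2)$ in $V$ are fine in the minimal case, but again need to be argued for general $V$, which the paper does by locating $V_{[2]}$ against the last $u_1$ of $v^2$ via the synchronization principle.
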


\begin{proof}
Let us first assume that $v^2=[{u_1}^pu_2{u_1}^q][{u_1}^pu_2{u_1}^q]$.

\vspace{-10pt}
\begin{my_itemize}
\leftskip=-5pt
\item[$(a)$] Let $p\geq q$.\\
By Lemma~\ref{syncpr}, the leftmost possible beginning of $V_{[2]}$ can be at\linebreak
$|{u_1}^pu_2{u_1}^{p\+q}u_2|\+1$ and so ${u_1}^pu_2$ is a prefix 
of ${v_1}^\q{V}$ and $v_2$ is a factor of ${u_1}^q$.
First we prove that $|v_1|> (p\-1)|u_1|$:

\smallskip
\leftskip=7pt
\noindent
Assume that $|v_1|\leq (p\-1)|u_1|$.
Then ${u_1}^p$ contains a factor of size $|v_1|\+|u_1|$ and the same factor is also
contained in ${v_1}^\q{V}$ as ${u_1}^pu_2$ is a prefix 
of ${v_1}^\q{V}$. If $\q{V}\geq 2$, then by
Lemma~\ref{comfactor}, $u_1=v_1$ and so 
${u_1}^pu_2$ is a prefix of ${u_1}^{\q{V}}$ and
thus ${u_1}^pu_2u_1$ is a prefix of ${u_1}^{\q{V}\+1}$, 
which contradicts Lemma~\ref{syncpr}.
Therefore $\q{V}=1$ and so $|v_1|\geq p|u_1|\+|u_2|>(p\-1)|u_1|$,
a contradiction with the assumption.

\smallskip
\leftskip=0pt
\noindent
Hence $|v_1|> (p\-1)|u_1| \geq q|u_1|$ and since $v_2$ is a factor in ${u_1}^q$,
$\p{V}=\q{V}$.\\
If $V_{[2]}$ begins even more to the right, this makes $v_2$ smaller
and ${v_1}^\q{V}$ bigger, thus the same argument can be applied.
\item[$(b)$] Let $p<q$\\
By Lemma~\ref{syncpr} the leftmost possible beginning of $V_{[2]}$ can be at\linebreak
$|{u_1}^pu_2{u_1}^{p\+q}u_2{u_1}^{q\-p}|\+1$ and so ${u_1}^pu_2{u_1}^{q\-p}$ is a prefix 
of ${v_1}^\q{V}$ and $v_2$ is a factor of ${u_1}^{p}$.
Let $r=max(p,q\-p)$. First we prove that $|v_1|> (r\-1)|u_1|$:

\smallskip
\leftskip=7pt
\noindent
Assume that $|v_1|\leq (r\-1)|u_1|$.
Then either ${u_1}^p$ or ${u_1}^{q\-p}$ contains a factor of size $|v_1|\+|u_1|$ and the same factor is also contained in ${v_1}^\q{V}$ as ${u_1}^pu_2{u_1}^{q\-p}$ is a prefix 
of ${v_1}^\q{V}$. If $\q{V}\geq 2$, then by Lemma~\ref{comfactor}, $u_1=v_1$ and so 
${u_1}^pu_2{u_1}^{q\-p}$ is a prefix of ${u_1}^{\q{V}}$, which contradicts Lemma~\ref{syncpr}.
Therefore $\q{V}=1$ and so $|v_1|\geq q|u_1|\+|u_2|>(r\-1)|u_1|$,
a contradiction with the assumption.

\smallskip
\leftskip=0pt
\noindent
Hence $|v_1|> (r\-1)|u_1| \geq p|u_1|$ and since $v_2$ is a factor in ${u_1}^p$,
$\p{V}=\q{V}$.\\
If $V_{[2]}$ begins even more to the right, this makes $v_2$ smaller
and ${v_1}^\q{V}$ bigger, thus the same argument can be applied.
\end{my_itemize}

\smallskip
\noindent
Let us thus assume that $v^2=[s_2{u_1}^{p\-1}u_2{u_1}^qs_1][s_2{u_1}^{p\-1}u_2{u_1}^qs_2]$
and $|s_1|\leq |u_1|\-$\lcs{u}. Then $|s_2|>$ \lcs{u}.

\vspace{-5pt}
\begin{my_itemize}
\leftskip=-5pt
\item[$(a)$] Let $p\geq q$.\\
By Lemma~\ref{syncpr}, the leftmost possible beginning of $V_{[2]}$ can be at\linebreak
$|s_2{u_1}^{p\-1}u_2{u_1}^{p\+q}u_2s_1|\+1$. If it started to the left of this point,
by Lemma~\ref{syncpr}, $s_2$ would have to be a suffix of $u_1u_2$ and so $s_2$
would be a common suffix of $u_1$ and $\c{u}_1$, and so $|s_2|\leq$
\lcs{u}, a contradiction.
Therefore the same arguments as in the case  
$v^2=[{u_1}^pu_2{u_1}^q][{u_1}^pu_2{u_1}^q]$
can be applied.
\item[$(b)$] Let $p<q$\\
By Lemma~\ref{syncpr} and by $|s_2|>$ \lcs{u}, 
the leftmost possible beginning of $V_{[2]}$ can be at $|s_2{u_1}^pu_2{u_1}^{p\+q}u_2{u_1}^{q\-p}s_1|\+1$. Again, if it started to the 
left of this point, by Lemma~\ref{syncpr}, $s_2$ would have to be a suffix of $u_1u_2$ and so 
$s_2$ would be a common suffix of $u_1$ and $\c{u}_1$, and so 
$|s_2|\leq$ \lcs{u}, a contradiction.
Therefore, the same arguments  as in the case  $v^2=[{u_1}^pu_2{u_1}^q][{u_1}^pu_2{u_1}^q]$ can be applied.
\end{my_itemize}
If $|v_2|\geq |u_1|$, then a prefix of $V_{[2]}$ must align with the last $u_1$
of ${u_1}^pu_2{u_1}^{q\+p}u_2{u_1}^q$ and so ${u_1}^pu_2{u_1}^{q\+p}u_2{u_1}^q$ 
is extended for sure by another $u_2$, i.e. $V^2$ contains a factor
${u_1}^qu_2{u_1}^qu_2$.
\end{proof}

\subsection{Some properties of \emate{s} of $\cal U$}

\medskip
\begin{lemma}
\label{ge}
Let ${\cal U}, {\cal V}, {\cal W}$ be \fsds{s} so that 
$\sbig{{\cal U}}<\sbig{{\cal V}}<\sbig{{\cal W}}$. Let $\cal V$ be a \gmate\ of $\cal U$ 
of type $(\p{U}\-t,\q{U}\+t)$, $2\leq p\-t$ and $2\leq q\+t$, 
and let $\cal W$ be an \emate\ but not a \eemate\ of $\cal V$. Then $G({\cal U},{\cal W})\geq t|u_1|$ and $T({\cal U},{\cal W})\geq (\p{U}\+\q{U})|u_1|$.
\end{lemma}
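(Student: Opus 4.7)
The plan is to prove the gap bound and the tail bound separately.

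For the gap bound $G({\cal U},{\cal W})\geq t|u_1|$, I would first establish $G({\cal U},{\cal V})\geq t|u_1|$. By the definition of a \gmate\ of type $(\p{U}\-t,\q{U}\+t)$, either $v=u_1^{\p{U}\-t}u_2u_1^{\q{U}\+t}$ or $v=s_2u_1^{\p{U}\-t\-1}u_2u_1^{\q{U}\+t}s_1$ with $|s_1|\leq|u_1|\-lcs(u_1,\c u_1)$. By Lemma~\ref{syncpr}, $u_2$ (a proper prefix of $u_1$) cannot occur strictly inside a run of $u_1$'s, so the interior $u_2$ of $v$ must coincide with a $u_2$-landmark of $U^2$. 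A direct calculation on either form gives $\sbig{\cal V}\-\sbig{\cal U}\in\{t|u_1|,\,t|u_1|\+|s_1|\}$, so $G({\cal U},{\cal V})\geq t|u_1|$; since $\sbig{\cal W}>\sbig{\cal V}$, the bound for $\cal W$ follows.

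For the tail bound $T({\cal U},{\cal W})\geq(\p{U}\+\q{U})|u_1|$, the target (after algebra) is $\send{w_{[1]}}\geq\sbig{\cal U}\+(2\p{U}\+\q{U})|u_1|\+|u_2|\-1$, i.e., $w_{[1]}$ must reach up to the position just before the second $u_2$-landmark of $U^2$. I would combine three facts: $(i)$ $\cal W$ being an \emate\ but not a \eemate\ of $\cal V$ forces $\sbig{\cal W}\leq\send{v_{[1]}}$, so $w^2$ starts inside $v_{[1]}$; $(ii)$ by Lemma~\ref{gamma}, $\p{V}=\q{V}$ and $V^2=v_1^{\p{V}}v_2v_1^{2\p{V}}v_2v_1^{\p{V}}$ has global period $|V|$, so rightmostness of $w^2$ forbids $w^2\subseteq V_{[1]}$ (else a $|V|$-shift would yield a later copy inside $V_{[2]}$), giving $\send{w^2}\geq\sbig{\cal V}\+|V|$; $(iii)$ the \gmate\ alignment identifies the $u_1,u_2$-parse of $x$ on $[\sbig{\cal U},\send{v^2}]$ as $u_1^{\p{U}}u_2u_1^{\p{U}\+\q{U}}u_2u_1^{\q{U}\+t}$, placing the target landmark at the stated absolute position. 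Applying the same $|V|$-shift argument to $W^2$ and invoking $|W|<2|w|$ then produces an upper bound on $|w|$ sufficient to convert the lower bound on $\send{w^2}$ into the desired lower bound on $\send{w_{[1]}}=\send{w^2}\-|w|$.

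The main obstacle is the tail bound: the $|V|$-shift argument directly controls $\send{w^2}$ rather than $\send{w_{[1]}}$, so controlling $|w|$ from above is essential; this requires combining the $W^2$-shift with $|W|<2|w|$, and Lemma~\ref{gamma} imposes a case split on whether $|v_2|<|u_1|$ or $(u_1^{\q{U}\+t}u_2)^2$ is a factor of $V^2$, making the bookkeeping delicate.
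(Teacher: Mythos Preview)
Your gap bound matches the paper's approach exactly: write out the position of $v^2$ as $u_1^ts_1[s_2u_1^{\p{U}-t-1}u_2u_1^{\q{U}+t}s_1]^2$ and read off $G({\cal U},{\cal V})\geq t|u_1|$, then use $\sbig{\cal W}>\sbig{\cal V}$.

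For the tail bound, however, you are taking a detour that does not close. You try to control $\send{w_{[1]}}$ via $\send{w^2}-|w|$, which forces you to bound $|w|$ from above. But your proposed mechanism --- applying the $|V|$-shift to $W^2$ and invoking $|W|<2|w|$ --- only produces \emph{lower} bounds: the shift gives $2|W|\geq |V|-(\sbig{\cal W}-\sbig{\cal V})+1$, and $|W|<2|w|$ turns that into $|w|>|W|/2$. Neither yields the upper bound you need, so step~(iii) of your plan cannot be completed as stated. The case split from Lemma~\ref{gamma} on $|v_2|$ does not help here either, since both branches speak to the structure of $V^2$, not to an upper bound on $|w|$.

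The paper avoids this entirely. It observes (via Lemma~\ref{gamma}) that $\p{V}=\q{V}$, so by Lemma~\ref{vv-cases} the \fsds\ $\cal V$ admits no \bmate; combined with $\cal W$ being an \emate\ of $\cal V$, Lemma~\ref{vv-cases}$(b_1)$ gives \emph{directly} that $\send{w_{[1]}}>\send{v_{[1]}}$ --- a lower bound on $\send{w_{[1]}}$ itself, with no need to pass through $\send{w^2}$ or to bound $|w|$ from above. The paper then invokes the synchronization principle to get $|w|\geq|v|$ and concludes. You should replace your periodicity-and-rightmostness argument for the tail by this direct appeal to Lemma~\ref{vv-cases}$(b_1)$; the fact that an \emate\ automatically has $\send{w_{[1]}}>\send{v_{[1]}}$ is exactly the structural statement that short-circuits your obstacle.
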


\begin{proof}
The position of $v^2$ is:

\smallskip
\noindent
${u_1}^ts_1{\big[}s_2{u_1}^{\p{U}\-t\-1}u_2{u_1}^{\q{U}\+t}s_1{\big]}{\big[}s_2{u_1}^{\p{U}\-t\-1}u_2{u_1}^{\q{U}\+t}s_1{\big]}$. Since $\cal V$ is 
a \gmate\ of $\cal U$, by Lemma~\ref{gamma} $\p{V}=\q{V}$ and so $\cal V$
cannot have a \bmate, see Lemma~\ref{vv-cases}. Thus $w_{[1]}$ must end past
the end of $v_{[1]}$ and thus by Lemma~\ref{syncpr}, $|w|\geq |v|$. Therefore,
$G\geq t|u_1|$ and $T\geq (\p{U}\+\q{U})|u_1|$.
\end{proof}

\begin{lemma}
\label{supere}
Let $\cal V$ be a \eemate\ of $\cal U$. Then either

\vspace{-10pt}
\begin{my_itemize}
\leftskip=-5pt
\item[$(a)$] $G({\cal U},{\cal V})\geq (2\p{U}\+\q{U}\-3)|u_1|\+2|u_2|$ and\newline
$T({\cal U},{\cal V})\geq (\p{U}\+\q{U}\-2)|u_1|\+|u_2|$, or
\item[$(b)$] $G({\cal U},{\cal V})\geq \p{U}|u_1|\+|u_2|$ and
$T({\cal U},{\cal V})\geq (\p{U}\+\q{U}\-1)|u_1|\+|u_2|$.
\end{my_itemize}
\end{lemma}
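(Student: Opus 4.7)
The plan is to fix $\sbig{{\cal U}}=1$ and set $s=\sbig{{\cal V}}$. Since $\cal V$ is a \eemate, $s > \send{u_{[1]}} = \p{U}|u_1|+|u_2|$, so $G({\cal U},{\cal V}) = s-1 \geq \p{U}|u_1|+|u_2|$, which already gives the $G$-bound of case~(b); all the work therefore lies in the $T$-bound and in identifying when the stronger (a) bounds are forced.

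First I would handle the easy subcase $s > \send{U^2} = 2|U|$. Here $G \geq 2|U| = (2\p{U}+2\q{U})|u_1|+2|u_2|$, well above the $G$-bound of (a), and $T \geq 2|U|+|v|-|u|$, which easily exceeds the $T$-bound of (a); so (a) holds. For $s \leq \send{U^2}$, so $v_{[1]}$ begins inside $U^2$, I would split on whether $\send{v_{[1]}} \geq N_2({\cal U})-1$. If so, then $T = \send{v_{[1]}}-|u| \geq (\p{U}+\q{U}-1)|u_1|+|u_2|$, yielding (b). Otherwise $\send{v_{[1]}} \leq N_2({\cal U})-2$, so $v_{[1]}$ lies inside the periodic factor $u_1^{\p{U}+\q{U}-1}u_2$ occupying positions $[|u|+1,N_2({\cal U})-1]$ of $U^2$, and I need to derive the (a) bounds.

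In this final subcase the strategy is to combine periodicity with the rightmost property of $v^2$. Since $v_{[1]}$ sits inside a region of period $|u_1|$ and is also a factor of a non-trivial power of $v_1$, \ComFactor\ forces $v_1$ to be a conjugate of $u_1$ (so $|v_1|=|u_1|$) whenever $|v| \geq |u_1|+|v_1|$, and the \Sync\ then pins the alignment of $v_1$ inside that period. The rightmost property of $v^2$ next forbids a shift-by-$|u_1|$ recurrence of $v^2$ within the periodic factor $u_1^{\p{U}+\q{U}}$ of $U^2$, since such a shift would yield another occurrence of $v^2$ strictly past $\sbig{{\cal V}}$. This forces $\send{v_{[2]}} > (2\p{U}+\q{U})|u_1|+|u_2|$, and combined with $\send{v_{[1]}} \leq N_2({\cal U})-2$ it produces enough slack to extract both $\sbig{{\cal V}} \geq N_2({\cal U})-2|u_1|$ (the $G$-bound of (a)) and $\send{v_{[1]}} \geq N_2({\cal U})-|u_1|-1$ (the $T$-bound of (a)).

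The hard part is this final subcase, where periodicity, the primitivity of $v_1$ and $u_1$, and the rightmost property of $v^2$ must be balanced to squeeze out precise bounds. The subtleties I expect to wrestle with are: (i) handling small $|v|$, where \ComFactor\ does not directly deliver $|v_1|=|u_1|$ and one has to invoke the \fds\ factorization of $\cal V$ to conclude the same; (ii) the boundary position $\send{v_{[2]}} \approx \send{U^2}$, where the shift-by-$|u_1|$ argument must be refined so that the shifted copy stays inside the periodic region; and (iii) the degenerate regime $\q{U}=1$, in which $u_1^{\p{U}+\q{U}}$ is short and the $T$-bound of (a) is the tightest, likely driving the exact form of the two bounds.
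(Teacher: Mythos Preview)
Your case split is essentially the paper's: the $(b)$ case is exactly ``$v_{[1]}$ is not a factor of $u_1^{\p{U}}u_2u_1^{\p{U}+\q{U}-1}u_2$'', and the remaining case is where $v_{[1]}$ lies inside the periodic block following $u_{[1]}$. The gap is in how you handle that remaining case.

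Applying the \ComFactor\ to $v^2$ and $u_1^{\p{U}+\q{U}}$ (they share a factor of length $|v|+|u_1|$) gives a stronger conclusion than you state: the \emph{primitive root of $v$} is a conjugate of $u_1$, so $v=\hat u_1^{\,k}$ for some conjugate $\hat u_1$ of $u_1$ and some $k\geq 1$. When $k=1$ your bookkeeping works and $(a)$ follows exactly as in the paper. The problem is $k\geq 2$. In that regime your two constraints, $\send{v_{[2]}}\geq N_2$ (which is all the rightmost shift-by-$|u_1|$ argument actually yields; your stated bound $\send{v_{[2]}}>(2\p{U}+\q{U})|u_1|+|u_2|$ is stronger and unjustified) and $\send{v_{[1]}}\leq N_2-2$, give only $|v|\geq 2$ and no upper bound on $|v|$ at all. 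But the $(a)$ $G$-bound requires $\sbig{v_{[1]}}\geq N_2-2|u_1|$, which together with $\send{v_{[1]}}\leq N_2-2$ forces $|v|\leq 2|u_1|-1$; and the $(b)$ $T$-bound requires $\send{v_{[1]}}\geq N_2-1$, which you have explicitly excluded. So for $k\geq 2$ \emph{neither} alternative holds from your constraints, and the only way out is to prove that $k\geq 2$ is \emph{impossible}.

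That impossibility is the heart of the paper's proof and it does not come from $v^2$ alone: one must invoke the structure of the longer square $V$. Since $v=\hat u_1^{\,k}$ is non-primitive, Lemma~\ref{nonprimitiveds} forces $\p{V}=\q{V}=1$ and $V=\hat u_1^{\,2k-1}t_1$ for a non-trivial proper prefix $t_1$ of $\hat u_1$; aligning $V_{[2]}$ against the $u_1$-periodic structure via the \Sync\ then yields $u_1=\c u_1$ in every subcase, a contradiction. Your list of anticipated subtleties is off-target: the issue is not ``small $|v|$'' but rather $|v|\geq 2|u_1|$, and it cannot be resolved by the rightmost property of $v^2$ or by the factorization $v=v_1^{\p{V}}v_2$ --- you need the shape of $V$.
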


\begin{proof}
If ${v^2}$ were a factor of ${u_1}^{\p{U}\+\q{U}\-1}u_2$, then there
would be a farther copy of $v^2$ in ${u_1}^{\p{U}\+\q{U}}u_2$ -- just starting $|u_1|$ positions to the right, which is a contradiction as $v^2$ must be a rightmost occurrence.
Hence $\send{v^2}>|{u_1}^\p{U}u_2{u_1}^{\p{U}\+\q{U}\-1}u_2|$.\\
Let us assume that $v_{[1]}$ is a factor in ${u_1}^{\p{U}}u_2{u_1}^{(\p{U}\+\q{U}\-1)}u_2$.\\
Then ${u_1}^{(\p{U}\+\q{U})}u_2$ and $v^2$ both contain a common factor 
of size $|v|\+|u_1|$, and thus by Lemma~\ref{comfactor}, $v={v_1}^k$ for some
conjugate $v_1$ of $u_1$ and some $k\geq 1$.
If $k=1$, then $\sbig{v_{[1]}}\geq |{u_1}^\p{U}u_2{u_1}^{(\p{U}\+\q{U}\-3)}u_2|$ 
and so $G\geq |{u_1}^\p{U}u_2{u_1}^{(\p{U}\+\q{U}\-3)}u_2|$. Moreover
$\sbig{v_{[2]}}=\sbig{v_{[1]}}\+|u_1|$ and so
$T\geq$\linebreak
$|{u_1}^{(\p{U}\+\q{U}\-2)}u_2|$, i.e. $(a)$ holds.\\
\ \\
Let us assume that $k\geq 2$. We will discuss two cases: 
\begin{my_itemize}
\leftskip=-7pt
\item[$(i)$] $v_{[1]}$ starts in $\o{u}_2$ and ends in $\o{u}_2$
Then there are $s_1s_2=\o{u}_2$ so that $v=(s_2u_2s_1)^k$ and so
that $v^2s_2$ is a suffix of ${u_1}^\p{U}u_2{u_1}^{(\p{U}\+\q{U})}$.
\begin{my_itemize}
\leftskip=-20pt
\item[$(i_1)$] Let $|s_2|\leq lcs(u_1,\c{u}_1)$.\\
Then we can assume without loss of generality that $v={u_1}^k$ as otherwise we can cyclically shift
the whole structure $|s_2|$ positions to the left. By Lemma~\ref{nonprimitiveds},
$V={u_1}^{2k\-1}t_1$ for some non-trivial proper prefix $t_1$ of $u_1$. Let $t_1t_2=u_1$.
Then the prefix ${u_1}^3$ of $V_{[2]}$ must align by Lemma~\ref{syncpr} with $t_2u_1u_1$ and
hence $t_2u_2=u_1$. Therefore $|t_2|=|\o{u}_2|$ and since $t_2$ is a
suffix of $u_1=u_2\o{u}_2$, in fact $t_2=\o{u}_2$, Hence
$u_1=\o{u}_2u_2$, a contradiction.
\item[$(i_2)$] Let $|s_2|>lcs(u_1,\c{u}_1)$.\\
Then by Lemma~\ref{nonprimitiveds}, $V=(s_2u_2s_1)^{2k\-1}t_1$ where $t_1$ is
a non-trivial proper prefix of $s_2u_2s_1$. Let $t_1t_2=s_2u_2s_1$. Then
the prefix $(s_2u_2s_1)^3$ of $V_{[2]}$ must align by Lemma~\ref{syncpr} with
$t_2u_2u_2s_1s_2u_2s_1s_2u_2$ and so either
$t_2u_2=s_2$ or $t_2u_2=s_su_2s_1s_2$. In either case,
$s_2$ is a suffix of $t_2u_2$ and since $s_2$ is a suffix if $\o{u}_2$,
$s_2$ is both a suffix of $u_1$ and of $\c{u}_1$. Hence
$|s_2|\leq lcs(u_1,\c{u}_1)$, a contradiction.
\end{my_itemize}
\item[$(ii)$]  $v_{[1]}$ starts in $u_2$ and ends in $u_2$.\\
Then there are $s_1s_2=u_2$ so that $v=(s_2\o{u}_2s_1)^k$ and so
that $v^2s_2$ is a suffix of ${u_1}^\p{U}u_2{u_1}^{(\p{U}\+\q{U})}u_2$.
\begin{my_itemize}
\leftskip=-20pt
\item[$(ii_1)$] Let $|s_2|\leq lcs(u_1,\c{u}_1)$.\\
Then without loss of generality we can assume $v=(\o{u}_2u_2)^k$ and $v^2$ is a suffix of ${u_1}^\p{U}u_2{u_1}^{(\p{U}\+\q{U})}u_2$ as otherwise we could cyclically
shift the whole structure $|s_2|$ positions to the left.
Then a suffix\newline
$(\o{u}_2u_2)(\o{u}_2u_2)(\o{u}_2u_2)(\o{u}_2u_2)$
of $v^2$  must align with\newline
$(\o{u}_2u_2)(\o{u}_2u_2)(\o{u}_2u_2)(u_2\o{u}_2)(u_2\o{u}_2)$
giving $\o{u}_2u_2=u_2\o{u}_2$, a contradiction.
\item[$(ii_2)$] Let  $|s_2| > lcs(u_1,\c{u}_1)$.\\
Then $v=(s_2\o{u}_2s_1)^k$ and by Lemma~\ref{nonprimitiveds},
$V=(s_2\o{u}_2s_1)^{2k\-1}t_1$ and $t_1t_2=s_2\o{u}_2s_1$. Then
a prefix $(s_2\o{u}_2s_1)^3$ of $V_{[2]}$ must align by Lemma~\ref{syncpr} with
$t_2s_1s_2\o{u}_2s_1s_2\o{u}_2$ and so
$t_2=s_2\o{u}_2$. Since $t_1t_2=s_2\o{u}_2s_1$, then
$t_1t_2s_2=s_2\o{u}_2s_1s_2=s_2\o{u}_2u_2$, i.e.
$t_1t_2s_2=s_2\c{u}_1$ and so $s_2$ is both a suffix of
$\c{u}_1$ and a suffix of $u_2$ and hence of $u_1$, and so
$|s_2|\leq lcs(u_1,\c{u}_1)$, a contradiction.
\end{my_itemize}
\end{my_itemize}
Considering the end of $v^2$ in the next $\o{u}_2$ will yield a contradiction using the same argumentation as for $(i)$, and 
considering the end of $v^2$ in the next $u_2$
will yield a contradiction using the same argumentation as for $(ii)$.\\
Thus, the only remaining case is when $v_{[1]}$ is not a factor in\newline ${u_1}^{\p{U}}u_2{u_1}^{(\p{U}\+\q{U}\-1)}u_2$, i.e.
$\send{v_{[1]}}>{u_1}^{\p{U}}u_2{u_1}^{(\p{U}\+\q{U}\-1)}u_2$ and
so $G\geq |{u_1}^\p{U}u_2|$ and
$T\geq |{u_1}^{(\p{U}\+\q{U}\-1)}u_2|$, i.e. case $(b)$ holds.
\end{proof}

\section{An upper bound for the number of {\fsds}s}
%\begin{definition}
In this section, we only consider strings containing at least one {\fsds}. Let $\delta({x})$ denote the number of \fsds{s} in $x$.
We  prove by induction that $\delta({x})\leq \frac{7}{8}|x|\-\frac{3}{8}|u|$  where $u$ is the generator of the shorter square of the 
first \fsds\ in $x$. 
We first need to investigate the relationship between two \fsds{s} of $x$ as the induction hypothesis is 
applied to the substring starting at some \fsds\ and extended to the
string starting with the first \fsds. 

\begin{lemma}
\label{GapTail}
Let $x$ be a string starting with an \fsds\ $\cal U$ and let $\cal V$ be another \fsds\ of $x$ with $\send{u_{[1]}}\leq\send{v_{[1]}}$. 
Let $x'$ be the suffix of $x$ starting at the same position as $\cal V$.
Let $d$ be the number of \fsds{s} 
between $\cal U$ and $\cal V$ including $\cal U$ but not including $\cal V$. 
Then, $\delta({x'})\leq \frac{5}{6}|x'|\-\frac{1}{3}|v|$ implies $\delta({x})\leq \frac{5}{6}|x|\-\frac{1}{3}|u|\+d\-\frac{1}{2}|G({\cal U},{\cal V})|\-\frac{1}{3}|T({\cal U},{\cal V})|$.
\end{lemma}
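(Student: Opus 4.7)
My plan is a direct counting argument: partition the \fsds{s} of $x$ according to whether their starting position is strictly less than $\sbig{{\cal V}}$ or not, then invoke the hypothesis on $x'$ for the second class. Because $x$ starts with $\cal U$ we have $\sbig{{\cal U}}=1$, and by assumption exactly $d$ \fsds{s} of $x$ have starting position in $[1,\sbig{{\cal V}})$.

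The substantive observation is that every \fsds\ of $x$ whose starting position is at least $\sbig{{\cal V}}$ is also an \fsds\ of $x'$. Indeed, a rightmost occurrence of a square in $x$ that starts inside the suffix $x'$ remains a rightmost occurrence in $x'$, because the positions strictly to the right of its starting position are identical in $x'$ and in $x$; balancedness is a property of the generator pair alone, and factorizability is automatic by item (c) of Lemma~\ref{ds}. Consequently the second class has cardinality at most $\delta(x')$, and therefore
\[
\delta(x)\leq d+\delta(x')\leq d+\tfrac{5}{6}|x'|-\tfrac{1}{3}|v|.
\]

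The rest is arithmetic. Using $\sbig{{\cal U}}=1$ one has $|x'|=|x|-(\sbig{{\cal V}}-1)=|x|-G({\cal U},{\cal V})$; and from $\send{u}=|u|$ together with $\send{v}=\sbig{v}+|v|-1=G({\cal U},{\cal V})+|v|$, we get $T({\cal U},{\cal V})=\send{v}-\send{u}=G({\cal U},{\cal V})+|v|-|u|$, i.e.\ $|v|=|u|+T({\cal U},{\cal V})-G({\cal U},{\cal V})$. The assumption $\sbig{{\cal U}}<\sbig{{\cal V}}$ forces $G\geq 0$ and the assumption $\send{u_{[1]}}\leq\send{v_{[1]}}$ forces $T\geq 0$, so the absolute values in the stated inequality are cosmetic. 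Substituting then gives
\[
d+\tfrac{5}{6}(|x|-G)-\tfrac{1}{3}(|u|+T-G)=\tfrac{5}{6}|x|-\tfrac{1}{3}|u|+d-\tfrac{1}{2}G-\tfrac{1}{3}T,
\]
which is precisely the claimed bound. The only step that is not pure bookkeeping is the preservation of the ``rightmost'' property under restriction to a suffix; once that observation is made, the rest of the lemma is immediate and I do not expect any further obstacle.
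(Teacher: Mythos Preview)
Your proof is correct and follows essentially the same route as the paper: both obtain $\delta(x)\leq d+\delta(x')$, apply the hypothesis on $x'$, and then substitute $|x'|=|x|-G$ and $|v|=|u|+T-G$ to reach the stated bound. The only difference is that you spell out why an \fsds\ of $x$ starting in $x'$ remains an \fsds\ of $x'$, a point the paper leaves implicit.
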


\begin{proof}
\onehalfspacing
As $|G|\+|v|=|u|\+|T|$, we have $\-\frac{1}{3}|v|=\-\frac{1}{3}|u|\-\frac{1}{3}|T|\+\frac{1}{3}|G|$.
Thus, $\delta({x})\leq d\+\delta({x'})\leq d\+\frac{5}{6}|x'|\-\frac{1}{3}|v|=
d\+\frac{5}{6}|x'|\-\frac{1}{3}|u|\-\frac{1}{3}|T|\+\frac{1}{3}|G|$. 
Thus, $\delta({x})\leq \frac{5}{6}(|x'|\+|G|)\-\frac{1}{3}|u| \+d\-\frac{5}{6}|G|\+\frac{1}{3}|G|\-\frac{1}{3}|T|
=\frac{5}{6}|x|\-\frac{1}{3}|u| \+d\-\frac{1}{2}|G|\-\frac{1}{3}|T|$ since\linebreak
$|x|=|x'|\+|G|$.
\end{proof}

\vskip -20pt
\singlespacing
Lemma~\ref{GapTail} yields a straightforward induction step whenever $\frac{1}{2}|G|\+\frac{1}{3}|T|\geq d$. 
By Lemma~\ref{vv-cases}, this condition always holds except for the two cases: either ${\cal V}$ is a right cyclic shift of $\cal U$ by 1 position and hence an \amate\ of ${\cal U}$, since then $\frac{1}{2}|G|\+\frac{1}{3}|T|=
\frac{1}{2}+\frac{1}{3}=\frac{5}{6}\ne 1$, or 
${\cal V}$ is a \bmate\ of ${\cal U}$ and such that 
$\send{v_{[1]}}<\send{u_{[1]}}$ -- hence Lemma~\ref{GapTail} is not
applicable. Therefore the whole
group of \amate{s} and \bmate{s} of $\cal U$ must be dealt together in the
induction
rather than carrying it from one \fsds\ to another. Since a \gmate\ of $\cal U$
does not provide a sufficiently large tail to offset all of the \amate{s} and
\bmate{s} of $\cal U$ preceding it, we have to include them in the special treatment as
well -- this is all precisely defined and explained in Section~\ref{abg}.
First we need to strengthens the bound on the length of the maximal right
cyclic shift of $\cal U$ when $\p{U}=\q{U}$.

\begin{figure}
\begin{center}
\includegraphics[scale=1]{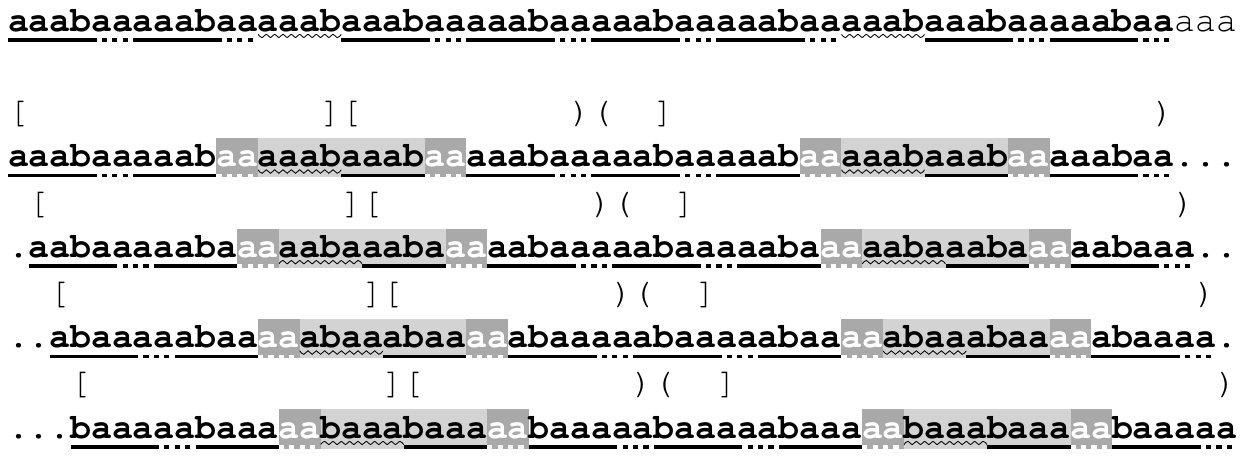}
\caption{Example of an $\alpha$-family of $\cal U$ with {\scriptsize $\,{\cal U}(1)={\cal U}(2)$}}
\end{center}
\end{figure}

\begin{lemma}
\label{rot1}
Let $x$ be a string starting with an \fsds\ ${\cal U}$  such that $\p{U}=\q{U}$,
i.e. $x$ = $U^2y$ for some, possibly empty, $y$, then 
$lcp(u,y)<min\{|y|,|u_2|\}$.
\end{lemma}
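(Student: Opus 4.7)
The plan is to argue by contradiction: I will show that $lcp(u,y)\geq|u_2|$ forces a copy of $u^2$ to appear strictly to the right of position~$1$ in $x$, contradicting the rightmost-ness of $u^2$ built into $\cal U$ being an \fsds. The bound $lcp(u,y)\leq|y|$ in the $\min$ is the trivial length bound; the substantive content is $lcp(u,y)<|u_2|$.

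Setting $p=\p{U}=\q{U}$, the structural factorization gives $u=u_1^pu_2$ and $U^2=u_1^pu_2u_1^{2p}u_2u_1^p$, so that $x=u_1^pu_2u_1^{2p}u_2u_1^p\,y$. Since $u_2$ is a prefix of $u_1$ (and hence of $u$), the hypothesis $lcp(u,y)\geq|u_2|$ is equivalent to $y$ starting with $u_2$, which in particular forces $|y|\geq|u_2|$.

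The main step is to exhibit an occurrence of $u^2$ at position $P:=2p|u_1|+|u_2|+1$ in $x$. I plan to read off $x[P\,..\,P+2|u|-1]$ as four consecutive aligned blocks: the last $p|u_1|$ symbols of the middle $u_1^{2p}$ (which form $u_1^p$ by alignment); the second $u_2$ inside $U^2$; the trailing $u_1^p$ of $U^2$; and finally $y[1..|u_2|]$, which equals $u_2$ by the assumption. These concatenate to $u_1^pu_2u_1^pu_2=u^2$, so $u^2$ occurs at $P>1$ in $x$, contradicting the rightmost-ness of $u^2$. This gives $lcp(u,y)<|u_2|$ and, combined with the trivial $lcp(u,y)\leq|y|$, the stated bound.

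The one creative ingredient is pinpointing the witness $P$; the symmetry $\p{U}=\q{U}$ is essential here, because it places the middle $u_1^{2p}$ so that shifting by $p|u_1|+|u_2|$ past $u_{[2]}$ still leaves a full $u_1^p$ before the second $u_2$ of $U^2$. Once $P$ is in hand, the verification reduces to bookkeeping against the explicit layout of $U^2$, with Lemma~\ref{syncpr} implicitly ensuring that the relevant $u_1$ copies align, and the contradiction with the \fsds\ rightmost-ness condition is immediate.
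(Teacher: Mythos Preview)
Your proof is correct and follows essentially the same approach as the paper's: assume $lcp(u,y)\geq|u_2|$, so that $y$ begins with $u_2$, and then read off the farther occurrence of $u^2=u_1^{p}u_2u_1^{p}u_2$ inside $x=u_1^{p}u_2u_1^{p}\,\underline{u_1^{p}u_2u_1^{p}u_2}\,z$, contradicting the rightmost-ness of $u^2$. Your witness position $P=2p|u_1|+|u_2|+1$ is exactly the start of the underlined factor in the paper's proof; your block-by-block verification is just a more explicit version of the same computation, and the appeal to Lemma~\ref{syncpr} is unnecessary since the alignment is already forced by the explicit factorization.
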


\begin{proof}
Lemma~\ref{rot1} trivially holds if $|y|\leq |u_2|$. Let us assume $|y|> |u_2|$ and 
$lcp(u,y) \geq |u_2|$. Let $e=\p{U}=\q{U}$. 
Then $x = U^2u_2z$ for some $z$ and thus,  $x={u_1}^eu_2{u_1}^e{\underline{{u_1}^eu_2{u_1}^eu_2}}z$, i.e. there is a
farther occurrence of $u^2$ (underlined), a contradiction.
\end{proof}

\subsection{Handling $\alpha$, $\beta$, and $\gamma$ mates}
\label{abg}

\smallskip
\noindent
The basic unit for our induction is what we call \emph{$\cal U$ family},
or equivalently \emph{family of $\cal U$}, 
which is presented in Definition~\ref{family}.

\begin{definition}
\label{family}
\noindent Let $x$ be a string starting with an \fsds\ $\cal U$. 
If all \fsds{s} in $x$ are \amate{s} of $\cal U$, then \emph{$\cal U$ family} consists of $\cal U$ and all its \amate{s}. Otherwise, let $\cal V$ be the rightmost 
\fsds\ that is not an \amate\ of $\cal U$. If $\cal V$ is not a \bmate\ of $\cal U$, then \emph{$\cal U$ family} consists of $\cal U$ and its \amate{s}. In all other
cases \emph{$\cal U$ family} consists of $\cal U$ and all its \amate{s}, \bmate{s}, and \gmate{s}.
\end{definition}

In the following sections we discuss the possible formats and sizes of $\cal U$ family.

\subsubsection{The case $\cal U$ family consists only of \amate{s}}
\label{alpha-family}

\smallskip
\noindent
We call such a family an $\alpha$-family. The family is either followed by no
other \fsds, or it is followed by a \gmate, a \dmate, or an \emate. If it were followed
by a \bmate, it would be an \bfam\ or an \gfam\ discussed in the following 
sections.

If $\p{U}=\q{U}$, then 
$u^2$ can be non-trivially cyclically shifted to the right at most  $|u_2|\-1$ times by Lemma~\ref{rot1},  and so the size of the $\cal U$ family is at most $|u_2|$. Since $U^2$ must be 
non-trivially cyclically shifted as well, $U^2$ must be followed by a prefix of $u_2$ of the same size.
See Figure~1 for an illustration of an $\alpha$-family where
$u_1=aaabaa$, $u_2=aaab$, $\o{u}_2=aa$, $\p{U}=\q{U}=2$. 
The solid underline  indicates $u_2$, and the dotted underline 
indicates $\o{u}_2$.
The extension of $U^2$ is the final suffix not in bold. The \fsds\
$\cal U$ can be non-trivially  cyclically shifted to the right by 
\lcp{u} = $lcp(u_2\o{u}_2,\o{u}_2u_2)=
lcp(aaabaa,aaaaab)=3$ as the extension of $U^2$ is $aaa$ which
is a prefix of $u_2$ of size 3. Thus, the family has a size of 4 which
equals $|u_2|$.  Note that if the string were extended by the next symbol of $u_2$ which is $b$,  $\cal U$ would cease to 
be an \fsds\ as its shorter square would have a farther occurrence.

If $\p{U}>\q{U}$, then
by Lemma~\ref{rot2}, $u^2$ can be non-trivially cyclically shifted at most $|u_1|\-2$ times,
therefore, the size of the $\cal U$ family  is at most $|u_1|\-1$. Since $U^2$ must be non-trivially 
cyclically shifted as well, $U^2$ must be followed by a prefix of $u_1$ of the same size.
See Figure~3 for an illustration where
$u_2=aaab$, $\o{u}_2=aa$, $\p{U}=2$, and $\q{U}=1$. The extension of $U^2$ is the final suffix not in bold.
Therefore $u_1=aaabaa$,  $\c{u}_1=aaaaab$, \lcp{u} = 3, and 
\lcs{u} = 0. Thus,
 $\cal U$ can be non-trivially cyclically shifted 3 times to the right  as the extension of $U^2$ is $aaa$ which is a prefix of $u_1$ of size 3, and not at all to the left.
The size of the  family is 4 and equals $|u_1|\-2$.  Note that if we extend the string by the next symbol of $u_1$, which is $b$, we do not gain 
yet another \fsds\ since the maximal shift of $u^2$ to the right is exhausted and
so only $U^2$ would be cyclically shifted.

\begin{figure}
\begin{center}
\includegraphics[scale=1]{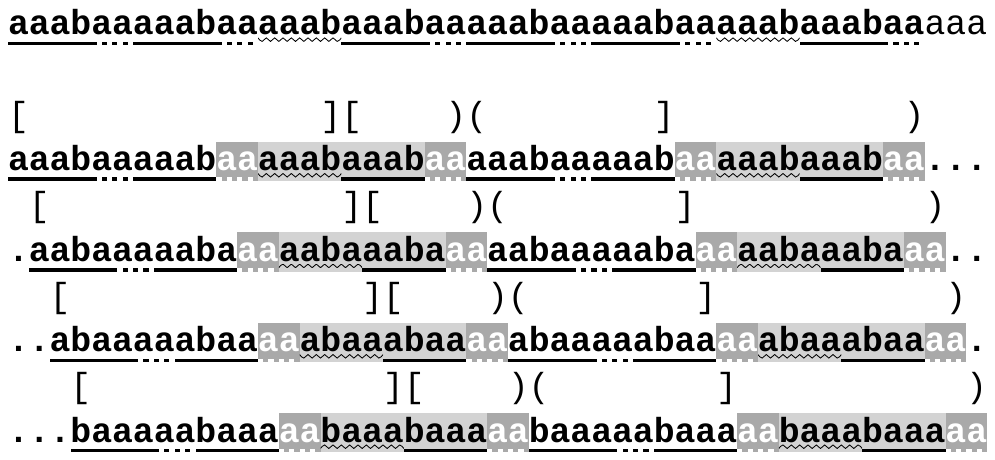}
\caption{Example of an $\alpha$-family of $\cal U$ with {\scriptsize $\,{\cal U}(1)>{\cal U}(2)$}}
\end{center}
\end{figure}

\begin{claim}
\label{bottomless-alpha}
Let $x$ be a string starting with an \afam\ of an \fsds\ $\cal U$ with no additional
\fsds{s} in $x$, then $\delta(x)\leq\frac{5}{6}{|x|}\-\frac{1}{3}{|u|}$.
\end{claim}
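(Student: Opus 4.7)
The plan is direct enumeration. Let $k=\delta(x)$ denote the size of the \afam. I will derive an upper bound on $k$ and a matching lower bound on $|x|$, then verify the arithmetic.

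For the lower bound on $|x|$: by Definition~\ref{def-rels}, the $k$ members of the family have pairwise distinct starting positions contained in the interval from $1$ to $lcp(u_1,\c{u}_1)+1$. Hence the rightmost member starts at some position $\geq k$, and its longer square (which lies inside $x$ since the family member is itself an \fsds\ of $x$) forces
\[
|x|\;\geq\;|U^2|+k-1\;=\;2(\p{U}+\q{U})|u_1|+2|u_2|+k-1.
\]

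For the upper bound on $k$, I invoke the two structural cases analyzed in Section~\ref{alpha-family}: if $\p{U}=\q{U}$, Lemma~\ref{rot1} gives $k\leq|u_2|$; if $\p{U}>\q{U}$, Lemma~\ref{rot2} gives $k\leq|u_1|-1$. Since $|u_2|<|u_1|$, both cases are subsumed by the uniform bound $k\leq|u_1|-1$.

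Substituting $|u|=\p{U}|u_1|+|u_2|$ and the lower bound on $|x|$ into $\tfrac{5}{6}|x|-\tfrac{1}{3}|u|$ and clearing the factor of $6$, the target inequality $\delta(x)\leq\tfrac{5}{6}|x|-\tfrac{1}{3}|u|$ reduces to
\[
k+5\;\leq\;(8\p{U}+10\q{U})|u_1|+8|u_2|.
\]
The left-hand side is at most $|u_1|+4$ using $k\leq|u_1|-1$, while the right-hand side is at least $18|u_1|+8$ using $\p{U},\q{U}\geq1$, $|u_1|\geq2$, and $|u_2|\geq1$. Since $|u_1|+4\leq 18|u_1|+8$ is immediate, the claim follows.

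The only spot requiring any care is the $|x|$ lower bound: one must ensure that the \emph{longer} square of the rightmost family member lies in $x$, not just its shorter square. This is automatic because every family member is an \fsds\ of $x$ by hypothesis, so both of its squares are substrings of $x$. Beyond this observation, the proof is a short arithmetic verification and presents no genuine obstacle.
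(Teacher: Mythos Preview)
Your proof is correct and follows essentially the same approach as the paper: bound the family size by roughly $|u_1|$, lower-bound $|x|$ by $|U^2|$ plus the number of shifts, and verify the arithmetic. Your lower bound $|x|\geq |U^2|+k-1$ is in fact slightly sharper than the paper's $|x|\geq |U^2|+f$, and your explicit justification that the longer square of the rightmost family member must lie in $x$ is a point the paper leaves implicit; otherwise the two arguments are the same.
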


\onehalfspacing
\begin{proof}
Let $f$ be the size of the $\cal U$-family. It follows that  $f< |u_1|$.
Note that $|u|=\p{U}|u_1|\+|u_2|$.
Since $|x|\geq |U^2|\+f=2(\p{U}\+\q{U})|u_1|\+2|u_2|\+f$, we get 
$\frac{5}{6}{|x|}\-\frac{1}{3}{|u|}\geq
\frac{5}{6}(2\p{U}\+\q{U})|u_1|\+\frac{5}{6}2|u_2|\-\frac{2}{3}p|u_1|\-\frac{1}{3}|u_2|= 
\frac{6\p{U}\+5\q{U}}{6}|u_1|\+$\newline
$\frac{8}{6}|u_2|>\frac{11}{6}|u_1|> f=\delta(x)$.
\end{proof}

\begin{claim}
\label{bottom-alpha}
Let $x$ be a string starting with an \afam\ of an \fsds\ $\cal U$. Let $\cal V$ be the first \fsds\ that is not a member of the $\cal U$ family.
If $\delta({x'})\leq \frac{5}{6}{|x'|}\-\frac{1}{3}{|v|}$ where $x'$ is a suffix of $x$ starting
at the same position as $\cal V$, then
$\delta({x})\leq \frac{5}{6}{|x|}\-\frac{1}{3}{|u|}$.
\end{claim}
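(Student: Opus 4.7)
The plan is to apply Lemma~\ref{GapTail} to the pair $({\cal U}, {\cal V})$. Let $d$ denote the number of \fsds{s} with starting position in the interval $[\sbig{{\cal U}}, \sbig{{\cal V}}-1]$; since the ${\cal U}$-family consists only of ${\cal U}$ and its \amate{s} and ${\cal V}$ is the leftmost \fsds\ outside the family, this $d$ is bounded by the family size. From the analysis preceding Claim~\ref{bottomless-alpha}, the family size is at most $|u_2|$ when $\p{U}=\q{U}$ and at most $|u_1|-1$ when $\p{U}>\q{U}$. Moreover, the $d$ members occupy $d$ distinct positions in an interval of length $G({\cal U},{\cal V})$, so $G({\cal U},{\cal V})\ge d$ follows immediately.

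To invoke Lemma~\ref{GapTail} I must first verify its hypothesis $\send{u_{[1]}}\le\send{v_{[1]}}$. By Definition~\ref{family}, in the $\alpha$-family case ${\cal V}$ cannot be a \bmate\ of ${\cal U}$, and then Lemma~\ref{vv-cases} forces ${\cal V}$ to be a \gmate, \dmate, or \emate, each of which satisfies $\send{v_{[1]}}>\send{u_{[1]}}$. Applying the lemma yields
\[ \delta(x) \le \frac{5}{6}|x| - \frac{1}{3}|u| + d - \frac{1}{2}G({\cal U},{\cal V}) - \frac{1}{3}T({\cal U},{\cal V}), \]
so the claim reduces to proving $d\le\frac{1}{2}G + \frac{1}{3}T$. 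If ${\cal V}$ is a \gmate\ or \dmate, then $|v|\ge|U|$ gives $T\ge G + \q{U}|u_1|\ge G + |u_1|$; combining with $G\ge d$ and $d\le|u_1|-1<2|u_1|$ yields $\frac{1}{2}G + \frac{1}{3}T \ge \frac{5}{6}d + \frac{1}{3}|u_1| > d$. If ${\cal V}$ is an \emate, then $\sbig{{\cal V}}\ge R_1({\cal U})$ forces $G\ge(\p{U}-1)|u_1| + |u_2| + lcp(u_1,\c{u}_1)$, which combined with $T\ge 1$ and the bound on $d$ establishes the inequality through a short calculation, separated into the regimes $\p{U}\ge 2$ and $\p{U}=1$.

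The main obstacle I foresee is the \emate\ sub-case when $\p{U}=\q{U}=1$. Here $G\ge|u_2|+lcp(u_1,\c{u}_1)$ is the weakest estimate, and using $d\le|u_2|$ together with $lcp(u_1,\c{u}_1)\ge d-1$ (since the family contains $d-1$ \amate{s} at consecutive shift positions) only gives $G\ge 2d-1$, leaving $\frac{1}{2}G+\frac{1}{3}T$ a small constant short of $d$ when $T=1$. To close this gap one must show that the extremal configuration attaining $G\le 2d-1$ and $T=1$ simultaneously cannot produce a valid \fsds\ ${\cal V}$, most likely by combining Lemma~\ref{syncpr} with the primitivity of $u_1$ and the rightmost-occurrence requirement on ${\cal V}$'s shorter square, thereby forcing either $G$ or $T$ above the minimum and restoring the desired inequality.
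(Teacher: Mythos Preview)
Your overall scaffolding matches the paper's: invoke Lemma~\ref{GapTail}, verify that $\send{u_{[1]}}\le\send{v_{[1]}}$, and show $\frac{1}{2}G+\frac{1}{3}T\ge d$ by a case split over the mate type of ${\cal V}$. Your treatment of the $\gamma$- and $\delta$-mate cases is essentially identical to the paper's: from $|v|\ge|U|$ one gets $T\ge G+|u_1|$, and together with $G\ge d$ and $d<|u_1|$ the inequality follows.

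The genuine gap is in your $\varepsilon$-mate case. You apply Lemma~\ref{vv-cases} to the pair $({\cal U},{\cal V})$ and obtain $G\ge(\p{U}-1)|u_1|+|u_2|+lcp(u_1,\c{u}_1)$; for $\p{U}\ge 2$ this suffices, but for $\p{U}=\q{U}=1$ you are left, as you note, with $\frac{1}{2}G+\frac{1}{3}T\ge d-\frac{1}{6}$. Your proposed patch --- arguing that the extremal configuration cannot realise a valid \fsds\ --- is speculative and not carried out, and it is not the direction the paper takes.

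The paper's device is to apply Lemma~\ref{vv-cases} not to $({\cal U},{\cal V})$ but to $({\cal W},{\cal V})$, where ${\cal W}$ is the \emph{last} member of the $\alpha$-family. Since an $\alpha$- or $\beta$-mate of ${\cal W}$ would be an $\alpha$- or $\beta$-mate of ${\cal U}$, the case split for $({\cal W},{\cal V})$ again reduces to $\gamma$, $\delta$, or $\varepsilon$. In the $\varepsilon$ sub-case the paper asserts $G({\cal W},{\cal V})\ge|u_1|$, and then adds $G({\cal U},{\cal W})\ge f-1$ to get $G({\cal U},{\cal V})\ge f-1+|u_1|\ge 2f$ (using $f\le|u_1|-1$), whence $\frac{1}{2}G\ge f$ already. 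This passage through ${\cal W}$ is the missing idea in your argument: it converts the $f-1$ positions consumed by the $\alpha$-family into gap rather than trying to squeeze them out of $lcp(u_1,\c{u}_1)$, and it avoids the $\p{U}=1$ deficit altogether.
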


\vskip -10pt
\singlespacing
\begin{proof}
Let $f$ be the size of the $\cal U$ family, then $f\leq |u_1|$.
Let $\cal W$ be the last member of the \afam\ of $\cal U$. Note that ${\cal W}={\cal U}$ when the ${\cal U}$ family consists only of ${\cal U}$.
We apply Lemma~\ref{vv-cases} to $\cal W$ and $\cal V$: since $\cal V$
is neither an \amate\  nor a \bmate\ of $\cal W$, then either it is a \gmate\ or a \dmate, or
an \emate\ of $\cal W$. If it is a \gmate\ or a \dmate, then $|v|\geq |W|$ and so 
the size of the tail between
$\cal W$ and $\cal V$ is at least $\q{W}|u_1|$. Since $\q{W}=\q{U}\geq 1$,
the size of the tail is at least $|u_1|$.
Therefore, the size of the gap $G$
between $\cal U$ and $\cal V$ is at least $f$, the size of the tail $T$
between $\cal U$ and $\cal V$ is at least $f\+|u_1|\geq 2f$. 
Therefore, $\frac{1}{2}|G|\+\frac{1}{3}|T|\geq
\frac{1}{2}f\+\frac{1}{3}2f=\frac{7}{6}f>f$. 
If $\cal V$ is an \emate\ of $\cal W$, then
the gap between $\cal W$ and $\cal V$ is at least $u_1$ and the
tail exists. Hence, the gap between $\cal U$ and $\cal V$ is at least
$f\+|u_1|\geq 2f$ and the tail exists. Therefore,
$\frac{1}{2}|G|\+\frac{1}{3}|T|\geq \frac{1}{2}2f=f$.
By Lemma~\ref{GapTail},
$\delta(x)\leq \frac{5}{6}{|x|}\-\frac{1}{3}{|u|}$.
\end{proof}

\subsubsection{The case $\cal U$ family consists of both \amate{s} and \bmate{s} with no \gmate{s}}
\label{family-beta}

\smallskip
\noindent
A $\cal U$ family consisting entirely of \amate{s} and \bmate{s} of $\cal U$ 
is called an \bfam\ and has the following structure:
\begin{my_itemize}
\leftskip=-10pt
\item[.] The first so-called \emph{\aseg}
consists of $\cal U$ and possibly its right cyclic shifts,
i.e. its \amate{s}. The size of the segment is $\leq$ \lcp{u} $\leq |u_1|\-2$, see Lemma~\ref{rot2}. All the \fsds{s} in this segments have 
the first exponent equal to $\p{U}$ and the second exponent
equal to $\q{U}$, thus we say that the type of the segment is $(\p{U},\q{U})$.

\item[.] Then there must be a \bmate\ of $\cal U$ and possibly its right cyclic shifts.
 All the \fsds{s} in the segment have the first exponent equal to 
$\p{U}\-i_1$ and the second exponent equal to $\q{U}\+i_1$ for some 
$1\leq i_1<(\p{U}\-\q{U})/2$, thus we say that the type of the segment is
$(\p{U}\-i_1,\q{U}\+i_1)$. This so-called \emph{\bseg} has size $\leq$ \lcp{u} 
$\leq |u_1|\-2$ if $\p{U}\-i_1>\q{U}\+i_1$, see Lemma~\ref{rot2},
or $\leq |u_2|\-1\leq |u_1|\-2$ if $\p{U}\-i_1=\q{U}\+i_1$. 

\item[.] Then there may be another \bseg\
of type $(\p{U}\-i_2,\q{U}\+i_2)$ for some $1\leq i_1<i_2<(\p{U}\-\q{U})/2$, etc. 
\item[.] Either there is no other \fsds\ in $x$, or the first \fsds\ after the last
member of the last \bseg\ must be either a  \dmate\ or an \emate\ of $\cal U$,
since if it were a \gmate, then the $\cal U$ family would be an \gfam\ discussed
in the following section.
\end{my_itemize}

\onehalfspacing
\noindent
There may be $t$ such \bseg{s} where $2t\leq \p{U}\-\q{U}$.
Let the last \bseg\ be of type $(\p{U}\-t,\q{U}\+t)$.  If 
$\p{U}\-t=\q{U}\+t$ (which implies that $\p{U}$ is odd and $\p{U}-\q{U}$ is even),
then $2t=\p{U}\-\q{U}$ and there are $\leq (\p{U}-\q{U})/2$ segments of
size $\leq |u_1|$ and 1 segment of size $\leq |u_2|$ and so the size of
the family $f\leq \frac{\p{U}\-\q{U}}{2}|u_1|+|u_2|$. If $\p{U}\-t>\q{U}\+t$,
there are two cases, either $\q{U}=1$ and then 
$f\leq {\big\lceil}\frac{\p{U}\-\q{U}}{2}{\big\rceil}|u_1|$,
or $\q{U}>1$ and $f\leq \frac{\p{U}\-\q{U}}{2}|u_1|$.

\vskip -10pt
\singlespacing
See Figure~4 for an illustration of an \bfam\ where
$u_2=aaab$, $\o{u}_2=aa$, $\p{U}=5$, and $\q{U}=1$. 
The configuration of square brackets \verb+[  ][  ]+ indicates the shorter square while
the configuration \verb+[  )(  )+ indicates the longer square.
The solid underline indicates $u_2$ while the dotted underline indicates
$\o{u}_2$. The extension of $U^2$ is the final suffix not
in bold.
The \fsds\ $\cal U$ can be non-trivially cyclically shifted to the right by 
at most \lcp{u} = $lcp(u_2\o{u}_2,\o{u}_2u_2)=
lcp(aaabaa,aaaaab)=3$ positions, thus every subfamily has at most 4 {\fsds}s.
Note, however, that the \invf\ $aaaaabaaabaa$ -- highlighted in Figure~4 -- cyclically
shifts within a subfamily and then returns to the 
original position for the first \fsds\ of each segment. There is 1 \aseg\ and 2 \bseg{s}
since $(\p{U}-\q{U})/2=2$, $t$ can take the $3$ values 0, 1, or 2. For each new
segment, the size of the shorter square decreases by a multiple of $|u_1|$ while 
the size of the longer square remains constant.

\begin{figure}
\leftskip=-18pt
\includegraphics[scale=0.72]{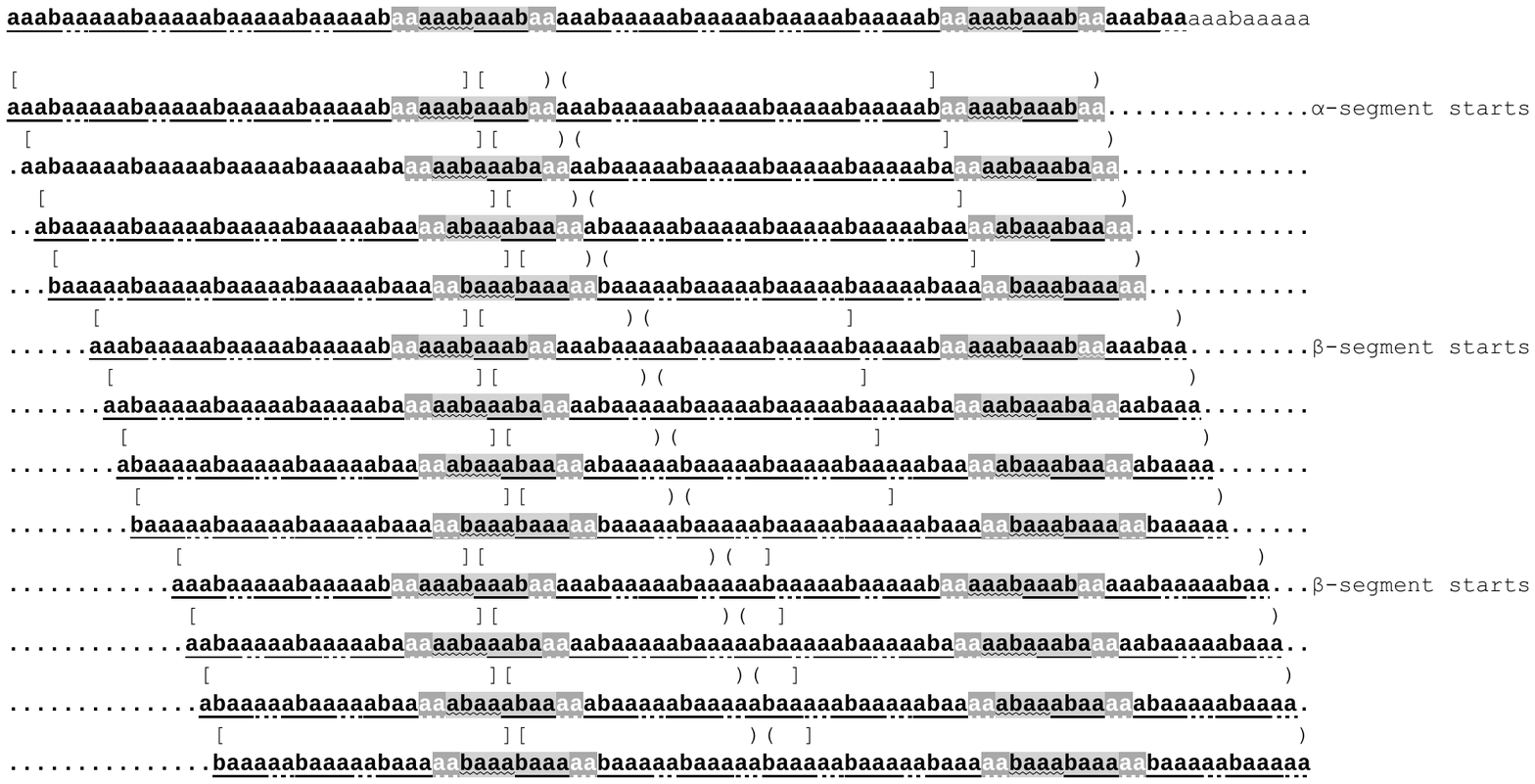}
\caption{Example of an \bfam\ of $\cal U$}
\end{figure}

\begin{claim}
\label{bottomless-beta}
Let $x$ be a string starting with an \bfam\ of an \fsds\ $\cal U$ and let $\cal V$
be the last member of the $\cal U$ family. Let every \fsds\ $\cal W$ after $\cal V$
be so that $R_1({\cal U})\leq \sbig{{\cal W}}\leq\send{u_{[1]}}$. 
Then $\delta(x)\leq \frac{5}{6}{|x|}\-\frac{1}{3}{|u|}$.
\end{claim}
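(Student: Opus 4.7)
The plan is to decompose $\delta(x) = f + g$, where $f$ is the size of the \bfam\ of $\cal U$ and $g$ counts the \fsds{s} $\cal W$ lying after $\cal V$, bound $f$, $g$, and $|x|$ separately, and then verify the target inequality by elementary algebra.

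For the family size, the structural analysis immediately preceding the claim gives three upper bounds for $f$ (depending on whether the last $\beta$-segment has type $(\p{U}\-t,\q{U}\+t)$ with equal exponents, and on whether $\q{U}=1$), all three implying
\[
f \;\leq\; \tfrac{\p{U}\-\q{U}\+1}{2}|u_1| + |u_2|.
\]
For $g$, each such $\cal W$ starts at a distinct position inside the window $[R_1({\cal U}), \send{u_{[1]}}]$, whose length is $\send{u_{[1]}} - R_1({\cal U}) + 1 = |u_1| - lcp(u_1,\c{u}_1) \leq |u_1|$, and since Theorem~\ref{2n} implies that at most one \fsds\ can start at any given position, $g \leq |u_1|$. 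For $|x|$, every family member is an \amate\ or \bmate\ of $\cal U$ and therefore has its longer square of length exactly $|U^2|$; since the $f$ family members occupy $f$ distinct starting positions in $[1,\sbig{\cal V}]$, we have $\sbig{\cal V}\geq f$, and so
\[
|x| \;\geq\; \sbig{\cal V} + |U^2| - 1 \;\geq\; |U^2| + f - 1 \;=\; 2(\p{U}\+\q{U})|u_1| + 2|u_2| + f - 1.
\]

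Substituting these bounds into $f + g \leq \tfrac{5}{6}|x| - \tfrac{1}{3}|u|$ and simplifying reduces the target to
\[
g + \tfrac{f\+5}{6} \;\leq\; \tfrac{4\p{U}\+5\q{U}}{3}|u_1| + \tfrac{4|u_2|}{3},
\]
which, using $\p{U}\geq \q{U}\+2$, $\q{U}\geq 1$ (so $\p{U}\geq 3$), $|u_1|\geq 2$, and $|u_2|\geq 1$, becomes a routine linear inequality in which the two sides differ by a positive amount of order $\p{U}|u_1|$. The only real work is bookkeeping: verifying the inequality in each of the three cases of the $f$-bound, the tightest margin arising when $\p{U}\-\q{U}=2$ with minimal $|u_1|$ and $|u_2|$. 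No significant combinatorial obstacle arises, and in particular no further appeal to the Inversion Factor Lemma is needed beyond the setup of the window $[R_1({\cal U}), \send{u_{[1]}}]$.
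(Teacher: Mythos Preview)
Your proof is correct and follows essentially the same approach as the paper. Both arguments decompose $\delta(x)$ into the family size plus at most $|u_1|$ additional \fsds{s} in the window, lower-bound $|x|$ by $|U^2|$ plus the family overhang, and then verify the target inequality by direct algebra; the paper parametrizes the family size by the type index $t$ (writing $\delta(x)\leq (t+2)|u_1|$) whereas you collapse the three structural bounds on $f$ into a single expression, but this is purely cosmetic.
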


\begin{proof}
\onehalfspacing
Let the type of $\cal V$ be $(\p{U}\-t,\q{U}\+t)$. Then $2t\leq \p{U}\-\q{U}$.
Since every \fsds\ $\cal W$ after $\cal V$ starts after $R_1$ but ends
before $\send{u_{[1]}}$, the total number of \fsds{s} in $x$ is the number of \fsds{s} in 
the $\cal U$ family plus possibly $\leq |u_1|$ additional \fsds{s}, i.e.
$f\leq (t\+2)|u_1|$.
Since $|x|\geq |U^2|\+f=2(\p{U}\+\q{U})|u_1|\+2|u_2|\+f$, we get\\
$\frac{5}{6}{|x|}\-\frac{1}{3}{|u|}\geq \frac{5}{6}2(\p{U}\+\q{U})|u_1|\+\frac{5}{6}2|u_2|\-
\frac{1}{3}\p{U}|u_1|\-\frac{1}{3}|u_2|=
\frac{4\p{U}\+5\q{U}}{3}|u_1|\+$\linebreak
$\frac{4}{3}|u_2|> \frac{4\p{U}\-4\q{U}}{3}|u_1|\+\frac{9\q{U}}{3}|u_1|>
\frac{8t}{3}|u_1|\+2|u_1|>$
$t|u_1|\+2|u_1|\geq f=\delta(x)$.
\end{proof}

\begin{claim}
\label{bottom-beta}
Let $x$ be a string starting with an \bfam\ of an \fsds\ 
$\cal U$ and let there be
some \fsds{s} in $x$ that are not members of the $\cal U$ family.
Let for any $\cal V$ that is not a member of the $\cal U$ family, 
$\delta({x'})\leq \frac{5}{6}{|x'|}\-\frac{1}{3}{|v|}$ where $x'$ is a suffix of $x$
starting at the same position as $\cal V$.
Then $\delta({x})\leq \frac{5}{6}{|x|}\-\frac{1}{3}{|u|}$.
\end{claim}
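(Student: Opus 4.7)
The plan is to apply Lemma~\ref{GapTail} to an appropriately chosen pivot \fsds\ $\cal V$ outside the $\cal U$-family and use the stated induction hypothesis $\delta(x')\leq \frac{5}{6}|x'|-\frac{1}{3}|v|$ on the suffix $x'$ starting at $\sbig{{\cal V}}$. Writing $f$ for the family size and $d$ for the number of \fsds{}s of $x$ lying between $\cal U$ (inclusive) and $\cal V$ (exclusive), it suffices to verify $\frac{1}{2}|G({\cal U},{\cal V})|+\frac{1}{3}|T({\cal U},{\cal V})|\geq d$; from the structural discussion preceding the claim one has $f\leq \bigl\lceil(\p{U}\-\q{U})/2\bigr\rceil|u_1|+|u_2|$ with the $|u_2|$ summand present only in the boundary subcase $\p{U}\-t=\q{U}\+t$. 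By Definition~\ref{family} and the hypothesis that the family contains no \gmate, any \fsds\ of $x$ outside the family is by Lemma~\ref{vv-cases} either a \dmate\ or an \emate\ of $\cal U$; in either case $\send{v_{[1]}}>\send{u_{[1]}}$, so Lemma~\ref{GapTail} is applicable.

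I would first try taking $\cal V$ to be the first \fsds\ outside the family, so $d=f$. In the \dmate\ case, Definition~\ref{def-rels}(4) forces a copy of ${u_1}^{\p{U}\+\q{U}\-1}u_2$ near the start of $v$, hence $|v|\geq (\p{U}\+\q{U})|u_1|+|u_2|$; combined with $|T|=|G|+|v|-|u|$ and $|u|=\p{U}|u_1|+|u_2|$ this yields $|T|\geq |G|+\q{U}|u_1|$, so $\frac{1}{2}|G|+\frac{1}{3}|T|\geq \frac{5}{6}|G|+\frac{\q{U}}{3}|u_1|$, which should dominate $f$ once one pins down a lower bound on $|G|$ from the position of the last family member. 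In the \emate\ case, if $\cal V$ is in fact a \eemate\ then Lemma~\ref{supere}~(a) or (b) supplies lower bounds on both $|G|$ and $|T|$ that cover $f$ outright; if $\cal V$ is an \emate\ that is not a \eemate, i.e.\ $R_1({\cal U})\leq \sbig{{\cal V}}\leq \send{u_{[1]}}$, I would instead pick as pivot the first subsequent \eemate\ (the interval $[R_1,\send{u_{[1]}}]$ has length at most $|u_1|\-1$, so it can host only finitely many intermediate \fsds{}s before a \eemate\ is forced further out), counting the at most $|u_1|$ intermediate \fsds{}s toward $d$ and relying on the enlarged Lemma~\ref{supere} bounds to absorb the extra $|u_1|$ term.

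Once $\frac{1}{2}|G({\cal U},{\cal V})|+\frac{1}{3}|T({\cal U},{\cal V})|\geq d$ is verified in every subcase, Lemma~\ref{GapTail} with the induction hypothesis delivers $\delta(x)\leq \frac{5}{6}|x|-\frac{1}{3}|u|$. The main obstacle I anticipate is the \dmate\ subcase in its tightest regime (small $|u_1|$ and the boundary $\p{U}\-t=\q{U}\+t$ with a nontrivial $|u_2|$ contribution to $f$): the $\alpha$-segment and the successive \bseg{s} are not placed contiguously, so the crude bound $|G|\geq f$ does not follow from $d=f$ alone and must be replaced by a sharper lower bound located from the position of the final \bseg, which I expect to read off as $|G|\geq t|u_1|+|u_2|$ using the constraint $\send{v_{[1]}}<\send{u_{[1]}}$ imposed on $\beta$-mates, combined with $\p{U}\geq \q{U}\+2\geq 3$ which always holds in the \bfam\ regime.
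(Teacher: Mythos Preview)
Your plan is essentially the paper's proof: the same $\delta$-mate versus $\varepsilon$-mate split, the same pivot (first non-family member, or first \eemate\ when the first non-family member is an ordinary \emate), and the same appeal to Lemma~\ref{GapTail} and Lemma~\ref{supere}. Two remarks.

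\emph{A genuine missing sub-branch.} In the $\varepsilon$-branch you write that a \eemate\ ``is forced further out''. That need not happen: it is perfectly possible that every non-family \fsds\ of $x$ starts in $[R_1({\cal U}),\send{u_{[1]}}]$ and no \eemate\ exists at all, so there is nothing to pivot on. The paper covers exactly this residual case by invoking Claim~\ref{bottomless-beta}, which directly yields $\delta(x)\leq \frac{5}{6}|x|-\frac{1}{3}|u|$ without any induction step. You should insert this branch before attempting to pivot to a \eemate.

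\emph{On your $\delta$-mate worry.} Your concern that ``$|G|\geq f$ does not follow from $d=f$'' is misplaced: distinct \fsds{s} start at distinct positions, so $|G({\cal U},{\cal V})|\geq d$ always, and when $\cal V$ is the first non-family member one has $d=f$ and hence $|G|\geq f$ immediately. The paper then uses $|G|\geq f$ together with the sharper tail bound $|T|\geq (\p{U}+\q{U}-1)|u_1|+|u_2|$, which comes straight from the $\delta$-mate definition (the prefix condition forces $\send{v_{[1]}}\geq N_2({\cal U})$), and the segment count $f\leq (t{+}1)|u_1|$ with $2t\leq \p{U}-\q{U}$ to close $\frac{1}{2}|G|+\frac{1}{3}|T|\geq f$. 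Your alternative inequality $|T|\geq |G|+\q{U}|u_1|$ is correct but strictly weaker; the ``tight regime'' you anticipate disappears once you switch to the paper's tail bound.
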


\begin{proof}
\onehalfspacing
Let the last \bseg\ be of type $(\p{U}\-t,\q{U}\+t)$. Then $\p{U}\-t\geq \q{U}\+t$
and so $2t\leq \p{U}\-\q{U}$ and the size of the $\cal U$ family is $\leq (t\+1)|u_1|$.
By Lemma~\ref{vv-cases}, $\cal V$ is either a \dmate, or a \gmate, or a \emate\
of $\cal U$. Since $\cal U$ family is an \bfam, $\cal V$ cannot be \gmate\ of $\cal U$.
The size of the $\cal U$ family is $f\leq (t\+1)|u_1|$.\\
Let us first discuss the case when $\cal V$ is a \dmate\ of $\cal U$.
Then $T({\cal U},{\cal V})\geq f$, $T({\cal U},{\cal V})\geq (\p{U}\+\q{U}\-1)|u_1|\+|u_2|$
and so $\frac{1}{2}|G|\+\frac{1}{3}|T|> \frac{1}{2}f\+\frac{\p{U}\+\q{U}\-1}{3}|u_1|>
\frac{1}{2}f\+\frac{\p{U}\-\q{U}}{3}|u_1|\+$\linebreak
$\frac{2\q{U}\-1}{3}|u_1|\geq
\frac{1}{2}f\+\frac{2t}{3}|u_1|\+\frac{1}{3}|u_1|>
\frac{1}{2}f\+\frac{2t\+1}{3}|u_1|>\frac{1}{2}f\+\frac{t\+1}{2}|u_1|\geq
\frac{1}{2}f\+\frac{1}{2}f=f$. Thus, by Lemma~\ref{GapTail},
$\delta(x)\leq \frac{1}{2}|x|\-\frac{1}{3}|u|$.\\
Let assume that $\cal V$ is an \emate\ of $\cal U$.\\
If there were no \eemate\ of $\cal U$, then by Lemma~\ref{bottomless-beta},
$\delta(x)\leq \frac{5}{6}|x|\-\frac{1}{3}|u|$. 
So let us assume that there is a \eemate, and let $\cal V$ be the first \eemate\ of $\cal U$.
Between the first \emate\ of $\cal U$ and $\cal V$ there are at most $|u_1|$ \fsds{s},
$\delta(x)\leq \delta(x')\+(t\+2)|u_1|$.
By the assumption of this lemma, $\delta(x')\leq \frac{1}{2}|x'|\-\frac{1}{3}|v|$.
By Lemma~\ref{supere}, there are two cases:
\begin{my_itemize}
\leftskip=-10pt
\item[$(a)$] $G({\cal U},{\cal V})\geq (2\p{U}\+\q{U}\-3)|u_1|\+2|u_2|$ and
$T({\cal U},{\cal V})\geq (\p{U}\+\q{U}\-3)|u_1|\+|u_2|$.

\smallskip
Since $\q{U}\geq 1$ and $t\geq 2$,
then $\frac{1}{2}|G|\+\frac{1}{3}|T|>
\frac{2\p{U}\+\q{U}\-3}{2}|u_1|\+$\linebreak
\smallskip
$\frac{\p{U}\+\q{U}\-2}{3}|u_1|=\frac{8\p{U}\+5\q{U}\-13}{6}|u_1|=
\frac{8\p{U}\-8\q{U}}{6}|u_1|\+\frac{13\q{U}\-13}{6}|u_1|>$\linebreak
\smallskip
$\frac{16t}{6}|u_1|=
t|u_1|\+\frac{10t}{6}|u_1|\geq
t|u_1|\+\frac{20}{6}|u_1|\geq t|u_1|\+2|u_1|$ as $t\geq 2$.
\smallskip
\item[$(b)$] $G({\cal U},{\cal V})\geq \p{U}|u_1|\+|u_2|$ and
$T({\cal U},{\cal V})\geq (\p{U}\+\q{U}\-1)|u_1|\+|u_2|$.

\smallskip
Then $\frac{1}{2}|G|\+\frac{1}{3}|T|>\frac{\p{U}}{2}|u_1|\+\frac{\p{U}\+\q{U}\-1}{3}|u_1|=
\frac{5\p{U}\+2\q{U}\-2}{6}|u_1|=$\linebreak
\smallskip
$\frac{5\p{U}\-5\q{U}}{6}|u_1|\+\frac{7\q{U}\-2}{6}|u_1|\geq \frac{10t}{6}|u_1|\+\frac{5}{6}|u_1|=t|u_1|\+\frac{4t}{6}|u_1|\+\frac{5}{6}|u_1|\geq
t|u_1|\+$\linebreak
\smallskip
$\frac{8}{6}|u_1|\+\frac{5}{6}|u_1|=
t|u_1|\+\frac{13}{6}|u_1|>t|u_1|\+2|u_1|$ as $t\geq 2$.
\end{my_itemize}
\end{proof}

\vspace{-25pt}

\subsubsection{The case $\cal U$-family consists of all three \amate{s}, \bmate{s}, and \gmate{s}}
\label{family-alpha-beta-gamma}

\smallskip
\noindent
We must first estimate the size of the family. We proceed by
investigating its structure. Since there must be some \bmate{s}
of $\cal U$, $\p{U}\geq \q{U}\+2$. The family consists of segments.

The first segment consists of $\cal U$ and possibly its right cyclic shifts,
i.e. its \amate{s}. The size of the segment is $\leq$ \lcp{u} $\leq |u_1|\-2$, see Lemma~\ref{rot2}. All the \fsds{s} in this segments have 
the first exponent equal to $\p{U}$ and the second exponent
equal to $\q{U}$, thus we say that the type of the segment is $(\p{U},\q{U})$.

Then there must be a \bmate\ of $\cal U$ and possibly its right cyclic shifts.
 All the \fsds{s} in the segment have the first exponent equal to 
$\p{U}\-i_1$ and the second exponent equal to $\q{U}\+i_1$ for some 
$1\leq i_1<(\p{U}\-\q{U})/2$, thus we say that the type of the segment is
$(\p{U}\-i_1,\q{U}\+i_1)$. This so-called \bseg\ has size $\leq lsp(u_1,\c{u}_1)\leq |u_1|\-2$ if $\p{U}\-i_1>\q{U}\+i_1$, see Lemma~\ref{rot2},
or $\leq |u_2|\-1\leq |u_1|\-2$ if $\p{U}\-i_1=\q{U}\+i_1$. Hence the
\bseg\ has size $\leq |u_1|\-2$.

Then there may be another \bseg\
of type $(\p{U}\-i_2,\q{U}\+i_2)$ for some $1\leq i_1<i_2<(\p{U}\-\q{U})/2$, etc. There may be $t$ such \bseg{s} where $2t\leq \p{U}\-\q{U}$.
Let the last \bseg\ have type $(p,q)$; then $p\geq q$.

\begin{figure}
\leftskip -10pt
\includegraphics[scale=0.81]{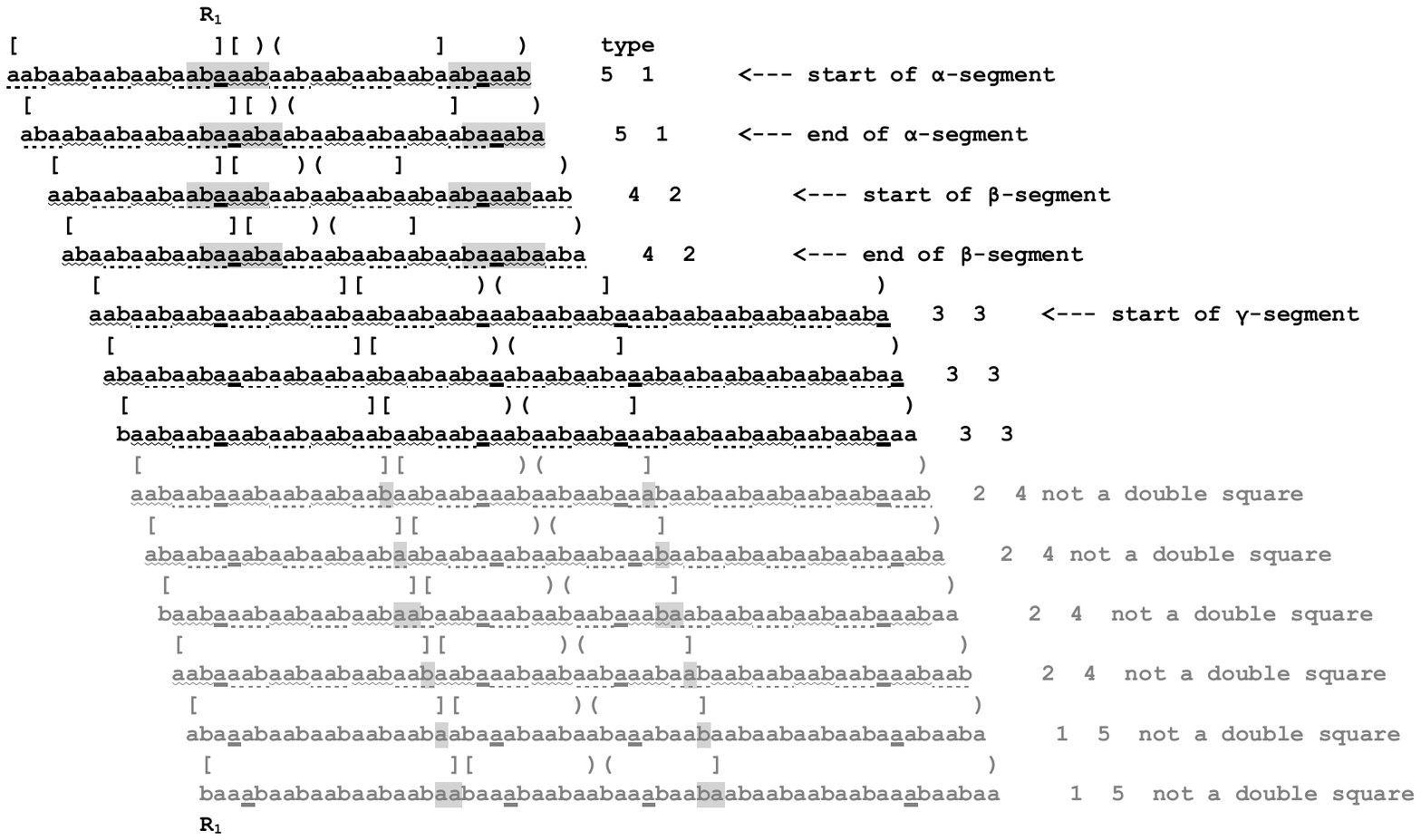}
\caption{Example of a \gfam\ of $\cal U$}
\end{figure}

Then there must be $\cal G$, a \gmate\ of $\cal U$.
Consider all the \gmate{s} of $\cal U$ of which $\cal G$ is the first one.
They form what we call a \gseg. Since all the \fsds{s} in the \gseg\
have the short square of the same length $|U^2|$ and since
they have equal exponents by Lemma~\ref{gamma},
by Lemma~\ref{vv-cases} they are all \amate{s} of $\cal G$.
Thus,  the \gseg\ consists of a \gmate\ of 
$\cal U$ and its right cyclic shifts. The  shorter square of $\cal G$ has 
a form\newline
$[s_1{u_1}^i{u_2}{u_1}^{(\p{U}\+\q{U}\-i\-1)}s_2][s_1{u_1}^i{u_2}{u_1}^{(\p{U}\+\q{U}\-i\-1)}s_2]$ 
for some $1\leq i\leq p$ and some
$s_1$ and $s_2$ such that $s_2s_1=u_1$. In order to estimate the
size of the \gseg, we have to estimate how many right cyclic shifts 
$\cal G$ can have. First we need to discuss the difference between a 
\ds\ structure and an \fsds: it is quite possible to have a \ds\ structure
in a string that is not an \fsds\ as there is a farther occurrence of the shorter or the longer square of the \ds\ structure. Thus, we always overestimate the sizes of $\cal U$
families, as we really count the \ds\ structures and up to $|u_1|$ cyclic shifts for each \aseg\ or \bseg.
We know that actually every segment can have
at most $lcs(u_1,\c{u}_1)\+lcp(u_1,\c{u}_1)\leq |u_1|\-2$ members.
So, we can imagine every segment to have a ``hole".
So if there is a farther \fds\ that can be assigned to the hole, we will say that it \emph{complements}
the segment and thus does not need to be counted as its count
was already part of the overestimation.
If there is a farther \fds\ $\cal V$ containing a farther copy of ${u_1}^ru_2{u_1}^ru_2$ and 
thus implying that though there is a structure of a \ds\
of type $(r,r')$, it is not an \fsds, we will say that $\cal V$ \emph{replaces} the \ds\ structure of type $(r,r')$.

Now back to estimating the size $f$ of an \gfam. We shall show that $f\leq \frac{2}{3}{(\p{U}\+1)}|u_1|$. There are basically two cases:
\begin{my_itemize}
\leftskip=-10pt
\item[$(i)$] $\cal G$, the first member of the \gseg, is of type $(\p{U}\-t,\q{U}\+t)$ and $\p{U}\-t> 2(\q{U}\+t)$.\\
Since $\p{U}\-t> 2(\q{U}\+t)$, $3t<\p{U}\-2\q{U}$ and so
$3t\leq \p{U}\-2\q{U}\-1$ and thus
$6t\leq 2\p{U}\-4\q{U}\-2$.
By Lemma~\ref{gamma} and Lemma~\ref{rot1}, $\cal G$ has $\leq (\q{U}\+t)\-1$
cyclic shifts.  Thus, we start with $\cal U$ of type $(\p{U},\q{U})$ and end
with the last member of the \gseg\ that is of type\newline
$(\p{U}\-t\-(\q{U}\+t\-1)),(\q{U}\+t\+(\q{U}\+t\-1))$, thus there are at most\linebreak
$(2\q{U}\+2t\-1)-\q{U}\+1=\q{U}\+2t$ members in the \gfam.
Then $3f=3\q{U}\+6t\leq 3\q{U}\+2\p{U}\-4\q{U}\-2=
2\p{U}\-\q{U}\-2\leq 2\p{U}\-3<2\p{U}\+2=2(\p{U}\+1)$ as $q\geq 1$.
Thus, $f<\frac{2}{3}(\p{U}\+1)|u_1|$.
\item[$(ii)$] ${\cal G}$, the first member of the \gseg, is of type $(\p{U}\-t,\q{U}\+t)$ and $\p{U}\-t\leq 2(\q{U}\+t)$.
\begin{my_itemize}
\leftskip=-25pt
\item[$(ii_1)$] $\p{U}\-t\leq \q{U}\+t$\\
By Lemma~\ref{gamma}, $G^2$ of $\cal G$ contains a further copy of\newline ${u_1}^{\q{U}\+t}u_2{u_1}^{\q{U}\+t}u_2$ and so $\cal G$ either ``replaces" a possible member of the \aseg\ or a \bseg, or it ``complements" the \aseg\ or a \bseg.
Thus, $f\leq \frac{1}{2}{(\p{U}\-\q{U})}|u_1|<\frac{2}{3}{(\p{U}\+1)}|u_1|$.
\item[$(ii_2)$] $\p{U}\-t > \q{U}\+t$.\\
Either $g_2$ of $\cal G$ is small, i.e. $|g_2|<|u_1|$ and then $\cal G$ has less than $|u_1|$ shifts, and so
$f\leq \frac{1}{2}{(\p{U}\-\q{U})}|u_1|\+|u_1|\leq \frac{2}{3}{(\p{U}\+1)}|u_1|$,
or $|g_2|\geq |u_1|$.\\
Thus assume that  $|g_2|\geq |u_1|$. We can further assume by Lemma~\ref{gamma}
that the last member of the \gseg\ is of type $(\q{U}\+t,\p{U}\-t)$, since if it were shifted
any further, it would start ``replacing" or ``completing" the members of the 
\aseg\ or the \bseg{s}, so we do not need to count them.

Since $\p{U}\-t\leq 2(\q{U}\+t)$, then $\p{U}\-2\q{U}\geq 3t$.
Thus $3f=$\newline
$3(\p{U}\-t\-\q{U}\-1)|u_1|=(3\p{U}\-3t\-3\q{U}\+3)|u_1|\leq$\newline
$(3\p{U}\-3\q{U}\+3\+2\q{U}\-\p{U})|u_1|=
(2\p{U}\-\q{U}\+3)|u_1|\leq$\newline
$(2\p{U}\+2)|u_1|=
2(\p{U}\+1)|u_1|$. Therefore, $f\leq \frac{2}{3}{(\p{U}\+1)}|u_1|$.
\end{my_itemize}
\end{my_itemize}

\begin{claim}
\label{bottomless-gamma}
Let a string $x$ start with an \gfam\ of an \fsds\ $\cal U$ and let there be no other
\fsds{s}. Then $\delta(x)\leq \frac{5}{6}|x|\-\frac{1}{3}|u|$.
\end{claim}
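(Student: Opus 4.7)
The plan is to combine the structural bound on the size of an \gfam\ derived in the analysis immediately preceding the claim with the trivial lower bound on $|x|$ coming from the fact that $x$ starts with $\cal U$, and then verify the target inequality by direct arithmetic. No induction is needed here because the claim is ``bottomless'': there is no downstream \fsds\ to propagate the estimate to, so Lemma~\ref{GapTail} is not invoked.

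First I would record that, since $x$ contains no \fsds\ outside the $\cal U$ family, $\delta(x)=f$, where $f$ denotes the number of members of the family. The case analysis just carried out, split into cases $(i)$, $(ii_1)$ and $(ii_2)$, has already established the uniform bound $f\leq \tfrac{2}{3}(\p{U}+1)|u_1|$ in every configuration of an \gfam. Moreover, since $\cal U$ starts at position $1$, the square $U^2$ is a prefix of $x$, so
\[
|x|\;\geq\;|U^2|\;=\;2(\p{U}+\q{U})|u_1|+2|u_2|.
\]

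The remainder is elementary arithmetic. Using $|u|=\p{U}|u_1|+|u_2|$ together with the lower bound on $|x|$, a direct expansion gives
\[
\tfrac{5}{6}|x|-\tfrac{1}{3}|u|\;\geq\;\tfrac{4\p{U}+5\q{U}}{3}|u_1|+\tfrac{4}{3}|u_2|,
\]
so that the desired inequality $\delta(x)\leq \tfrac{5}{6}|x|-\tfrac{1}{3}|u|$ reduces, after subtracting the bound on $f$, to
\[
\tfrac{2\p{U}+5\q{U}-2}{3}|u_1|+\tfrac{4}{3}|u_2|\;\geq\;0.
\]
This is strictly positive because the presence of at least one \bseg\ in an \gfam\ forces $\p{U}\geq \q{U}+2\geq 3$, so that $2\p{U}+5\q{U}-2\geq 9>0$.

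The only genuine obstacle was already absorbed into the structural analysis that produced the uniform bound $f\leq \tfrac{2}{3}(\p{U}+1)|u_1|$. Once that bound is in hand, the claim follows by the short computation above, with no additional case distinction on the $\alpha$-, $\beta$-, or $\gamma$-segments of the family.
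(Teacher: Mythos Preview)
Your proof is correct and follows essentially the same approach as the paper: both use the structural bound $f\leq \tfrac{2}{3}(\p{U}+1)|u_1|$ together with a lower bound on $|x|$ coming from $U^2$, then finish by direct arithmetic. The only difference is that the paper uses the slightly stronger estimate $|x|\geq |U^2|+f$ (yielding an extra $\tfrac{5}{6}f$ on the right-hand side), whereas you use just $|x|\geq |U^2|$; your weaker bound still suffices because the slack $\tfrac{2\p{U}+5\q{U}-2}{3}|u_1|+\tfrac{4}{3}|u_2|$ is comfortably positive.
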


\begin{proof}
\onehalfspacing
The size of the family $f\leq \frac{2}{3}(\p{U}\+1)|u_1|$ and so
$\frac{1}{6}f\leq \frac{2}{18}(\p{U}\+1)|u_1|$. 
$|x|\geq f\+|U^2|=f\+2(\p{U}\+\q{U})|u_1|\+2|u_2|$, and so
$\frac{5}{6}|x|\-\frac{1}{3}|u|\geq \frac{5}{6}f\+$\\
$\frac{5}{6}(2(\p{U}\+\q{U})|u_1|\+
\frac{5}{6}2|u_2|\-\frac{1}{3}\p{U}|u_1|\-\frac{1}{3}|u_2|=
\frac{5}{6}f\+\frac{8}{6}p|u_1|\+\frac{10}{6}\q{U}|u_1|\+\frac{3}{6}|u_2|>
\frac{5}{6}f\+\frac{30}{18}p|u_1|>
\frac{5}{6}f\+\frac{2}{18}p|u_1|\+\frac{28}{18}p|u_1|\geq
\frac{5}{6}f\+\frac{2}{18}(p\+1)|u_1|\geq\frac{5}{6}f\+\frac{1}{6}f= f =\delta(x)$.
\end{proof}

\begin{claim}
\label{bottom-gamma}
Let a string $x$ start with an \gfam\ of an \fsds\
$\cal U$. Let $\cal V$ be the first \fsds\ not
in the $\cal U$ family. Let $x'$ be the suffix of $x$ starting at the same position
as $\cal V$. Let $\delta(x')\leq \frac{5}{6}|x'|\-\frac{1}{3}|v|$. Then
$\delta(x)\leq \frac{5}{6}|x|\-\frac{1}{3}|u|$.
\end{claim}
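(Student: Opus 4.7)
The plan is to apply Lemma~\ref{GapTail}. Let $d$ denote the number of \fsds{s} strictly between the starts of $\cal U$ and $\cal V$ (counting $\cal U$ but not $\cal V$). Because $\cal V$ is the first \fsds\ outside the $\cal U$ family, every such \fsds\ belongs to the family, so $d \leq f$, where $f \leq \frac{2}{3}(\p{U}+1)|u_1|$ is the bound on the \gfam\ size derived immediately before Claim~\ref{bottomless-gamma}. Given the inductive hypothesis $\delta(x') \leq \frac{5}{6}|x'| - \frac{1}{3}|v|$, Lemma~\ref{GapTail} delivers $\delta(x) \leq \frac{5}{6}|x| - \frac{1}{3}|u|$ provided we verify $\frac{1}{2}|G| + \frac{1}{3}|T| \geq d$. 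Hence the proof reduces to establishing this inequality for each possible type of $\cal V$ relative to $\cal U$, as classified by Lemma~\ref{vv-cases}.

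Since $\cal V$ is neither an \amate, \bmate, nor \gmate\ of $\cal U$ (any of which would place $\cal V$ in the family), Lemma~\ref{vv-cases} leaves two cases. If $\cal V$ is a \dmate, I would combine the structural lower bound $T \geq (\p{U}+\q{U}-1)|u_1| + |u_2|$ (coming from the forced prefix of $v$ in the \dmate\ definition) with the geometric inequality $G \geq f$, argued by tracking where the \aseg, \bseg{s}, and \gseg\ lie relative to $R_1({\cal U})$, and then perform a short linear computation, structurally identical to the \dmate\ sub-case of Claim~\ref{bottom-beta} but invoking the sharper \gfam\ size bound, to close the inequality. If $\cal V$ is an \emate\ but not a \eemate, I would take $\cal G$ to be the last \gmate\ in the \gseg\ (whose exponents are at least $2$) and invoke Lemma~\ref{ge}, which supplies $G({\cal U},{\cal V}) \geq t|u_1|$ and $T({\cal U},{\cal V}) \geq (\p{U}+\q{U})|u_1|$; combining these with the bound on $f$ and on the offset from $\cal U$ to $\cal G$ inside the family closes the inequality. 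If $\cal V$ is a \eemate, I would invoke Lemma~\ref{supere} and verify each of its two alternatives $(a)$ and $(b)$ arithmetically, exactly as in the corresponding sub-case of Claim~\ref{bottom-beta}.

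The main obstacle is arithmetic tightness: the bound $\frac{2}{3}(\p{U}+1)|u_1|$ on $f$ is essentially sharp, so the margin in $\frac{1}{2}|G| + \frac{1}{3}|T| - d$ is narrow, especially when $\q{U}=1$. In that edge case I would replace the blanket family-size bound by the sharper intermediate estimates derived inside sub-cases (i), (ii.1), (ii.2) of the \gfam\ size analysis --- for instance $f \leq \frac{2\p{U}-3}{3}|u_1|$ in sub-case (i) --- to absorb the deficit in each arithmetic closure. Apart from this refinement, the argument is a structural analogue of the \bfam\ proof of Claim~\ref{bottom-beta}, with Lemma~\ref{ge} supplying the quantitative replacement needed to accommodate the presence of a \gseg.
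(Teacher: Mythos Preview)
Your overall scaffold---case split via Lemma~\ref{vv-cases}, reduce to $\frac{1}{2}|G|+\frac{1}{3}|T|\geq d$ via Lemma~\ref{GapTail}, note $d=f\leq\frac{2}{3}(\p{U}+1)|u_1|$---matches the paper. But the quantitative closures diverge from the paper in two places, and in each your proposed route has a gap.

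\emph{The \dmate\ case.} You use the definitional bound $T\geq(\p{U}+\q{U}-1)|u_1|+|u_2|$, observe it falls short when $\q{U}=1$, and propose falling back on sub-case-specific family-size estimates. The paper instead \emph{strengthens the tail bound} to $T\geq(\p{U}+\q{U})|u_1|$ by an inversion-factor argument: $v_{[1]}$ contains an \invf\ from $[L_1({\cal U}),R_1({\cal U})]$, so $v_{[2]}$ must too; if $\sbig{v_{[2]}}\leq R_2({\cal U})$ this forces $|v|=|U|$, contradicting $|v|>|U|$; hence $\sbig{v_{[2]}}>R_2({\cal U})$ and Lemma~\ref{syncpr} gives the extra $|u_1|$. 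With this, $\frac{1}{2}f+\frac{1}{3}(\p{U}+\q{U})|u_1|\geq\frac{1}{2}f+\frac{1}{3}(\p{U}+1)|u_1|\geq f$ closes uniformly. Your fallback may be salvageable but is not verified; the paper's route is the missing idea.

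\emph{The \emate-not-\eemate\ case.} You invoke Lemma~\ref{ge} with $({\cal U},{\cal G},{\cal V})$, but that lemma requires ${\cal V}$ to be an \emate-not-\eemate\ \emph{of $\cal G$}, whereas you only know this relative to $\cal U$; the condition $R_1({\cal G})\leq\sbig{{\cal V}}$ is not established. The paper does not use Lemma~\ref{ge} here. Instead it first shows $\send{v_{[1]}}>\send{g_{[1]}}$ (since $\p{G}=\q{G}$ by Lemma~\ref{gamma}, $\cal G$ has no \bmate, so by Lemma~\ref{vv-cases} any later \fsds\ has $\send{v_{[1]}}>\send{g_{[1]}}$), and then, because $\sbig{{\cal V}}\leq\send{u_{[1]}}$ while $\send{v_{[1]}}$ lies past $g_{[1]}$, Lemma~\ref{syncpr} again yields $T\geq(\p{U}+\q{U})|u_1|$ and the same closure as above.

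\emph{The \eemate\ case.} The arithmetic is \emph{not} the same as Claim~\ref{bottom-beta}: there the target was $(t+2)|u_1|$, here it is $\frac{2}{3}(\p{U}+1)|u_1|$. The paper redoes the computation using Lemma~\ref{supere} and the constraint $\p{U}\geq 4$ (forced by the presence of both \bmate{s} and \gmate{s}); you would need to carry this out rather than cite the earlier claim.
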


\begin{proof}
$\cal V$ can be either a \dmate\ or \emate\ of $\cal U$. 
Let $\cal G$ be the last member of the \gseg\ and let its type be $(\p{U}\-t,\q{U}\+t)$.
Then $g^2$ has the format\\
${u_1}^ts_1[s_2{u_1}^{(\p{U}\-t\-1)}u_2{u_1}^\q{U}s_1][s_2{u_1}^{(\p{U}\-t\-1)}u_2{u_1}^\q{U}s_1]$. If $\send{v_{[1]}}\leq \send{g_{[1]}}$, then by Lemma~\ref{vv-cases} $\cal U$ would be a \bmate\ of $\cal G$, which is impossible as by Lemma~\ref{gamma}, $\p{G}=\q{G}$. Thus $\send{v_{[1]}}>\send{g_{[1]}}$.

\vspace{-10pt}
\begin{my_itemize}
\onehalfspacing
\leftskip=-7pt
\item[$(a)$] Let $\cal V$ be a \dmate.\\
Then we are assured that $T({\cal U},{\cal V})\geq (\p{U}\+\q{U}\-1)|u_1|$.
But a little bit more is true. Clearly,
$v_{[1]}$ contains an \invf\ from ${\big[}L_1({\cal U}),R_1({\cal U}){\big]}$.
If $\sbig{v_{[2]}}\leq R_2({\cal U})$, then $v_{[2]}$ would contain an \invf\
from ${\big[}L_2({\cal U}),R_2({\cal U}){\big]}$, giving $|v|=|w|$, a contradiction. Hence
$\sbig{v_{[2]}} > R_2({\cal U})$ and by Lemma~\ref{syncpr}, $T({\cal U},{\cal V})\geq (\p{U}\+\q{U})|u_1|$.\\
Since $G({\cal U},{\cal V})\geq f$, we have
$\frac{1}{2}|G|\+\frac{1}{3}|T|\geq \frac{1}{2}f\+
\frac{1}{3}(\p{U}\+\q{U})|u_1|\geq$\newline
$\frac{1}{2}f\+\frac{1}{3}(\p{U}\+1)|u_1|\geq \frac{1}{2}f\+\frac{1}{2}f=f$ as 
$\q{U}\geq 1$ and  $\frac{1}{2}f\leq \frac{1}{3}{(\p{U}\+1)}|u_1|$.
\item[$(b)$] Let $\cal V$ be an \emate\ of $\cal U$, but not a \eemate.\\
So $\sbig{v_{[1]}}\leq \send{u_{[1]}}$ and $\send{v_{[1]}}>\send{g_{[1]}}$.
By Lemma~\ref{syncpr}, $T({\cal U},{\cal V})\geq (\p{U}\+\q{U})|u_1|$ and so
$\frac{1}{2}|G|\+\frac{1}{3}|T|\geq \frac{1}{2}f\+$\newline
$\frac{1}{3}(\p{U}\+1)|u_1|\geq
\frac{1}{2}f\+\frac{1}{2}f=f$.
\item[$(c)$] Let $\cal V$ be a \eemate\ of $\cal U$.\\
By Lemma~\ref{supere}, there are two possibilities:
\begin{my_itemize}
\leftskip=-20pt
\item[$(c_1)$]  $G\geq (2\p{U}\+\q{U}\-3)|u_1|$ and $T\geq (\p{U}\+\q{U}\-2)|u_1|$\\
Then $\frac{1}{2}|G|\+\frac{1}{3}|T|\geq
\frac{6\p{U}\+3\q{U}\-9\+2\p{U}\+2\q{U}\-4}{6}|u_1|=$\newline
\smallskip
$\frac{8\p{U}\_5\q{U}\-13}{6}|u_1|=
\frac{4\p{U}\+4\p{U}\+5\q{U}\-13}{6}|u_1|$. Since $\p{U}\geq 4$ and\linebreak
$\q{U}\geq 1$,
$\frac{1}{2}|G|\+\frac{1}{3}|T|\geq\frac{4\p{U}\+16\+5\-13}{6}|u_1|=
\frac{4\p{U}\+8}{6}|u_1|>\frac{4\p{U}\+4}{6}|u_1|=f$.
\item[$(c_2)$] $G\geq \p{U}|u_1|$ and $T\geq (\p{U}\+\q{U}\-1)|u_1|$\\
\smallskip
$\frac{1}{2}|G|\+\frac{1}{3}|T|\geq \frac{3\p{U}\+2\p{U}\+2\q{U}\-2}{6}|u_1|=
\frac{2\p{U}\+3\p{U}\+2\q{U}\-2}{6}|u_1|\geq$\newline
\smallskip
$\frac{2\p{U}\+16\+2\-2}{6}|u_1|=\frac{2\p{U}\+12}{6}|u_1|>
\frac{2\p{U}\+2}{16}|u_1|\geq f$, since $\p{U}\geq 4$ and
$\q{U}\geq 1$.
\end{my_itemize}
\end{my_itemize}
\end{proof}

\vspace{-30pt}

\subsection{New upper bounds}\label{ubound}

\smallskip
\begin{theorem}
The number of \fsds{s} in a string of length $n$ is bounded by $\lfloor 5n/6 \rfloor$.
\end{theorem}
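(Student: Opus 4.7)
The plan is to establish, by strong induction on the length $|x|$ of a string $x$ that starts with an {\fsds} $\cal U$, the stronger quantitative statement
\[
\delta(x)\;\leq\;\tfrac{5}{6}|x|\-\tfrac{1}{3}|u|,
\]
where $u$ is the generator of the shorter square of $\cal U$. Once this is proved, the theorem follows by a single truncation: given an arbitrary string $x$ of length $n$, either $x$ contains no \fsds\ and $\delta(x)=0$, or $x$ has a first \fsds\ whose starting position cuts off a suffix $y$. Since every \fsds\ of $x$ also lies in $y$ and $y$ starts with an \fsds, $\delta(x)=\delta(y)\leq \tfrac{5}{6}|y|\-\tfrac{1}{3}|u|\leq \tfrac{5}{6}n$, and integrality of $\delta(x)$ yields $\delta(x)\leq \lfloor 5n/6\rfloor$.

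For the inductive statement, the base case is handled by the ``bottomless'' claims. Let $\cal U$ be the first \fsds\ of $x$ and consider the $\cal U$-family from Definition~\ref{family}. By construction this family is of exactly one of three types: an \afam\ (Section~\ref{alpha-family}), an \bfam\ (Section~\ref{family-beta}), or an \gfam\ (Section~\ref{family-alpha-beta-gamma}). If no \fsds\ of $x$ lies outside the $\cal U$-family, then Claim~\ref{bottomless-alpha}, \ref{bottomless-beta}, or \ref{bottomless-gamma} applies directly and gives the desired inequality. Note that in the \bfam\ case the bottomless Claim~\ref{bottomless-beta} even allows a tail of small \emate{s} of $\cal U$ whose starting positions lie in $[R_1({\cal U}),\send{u_{[1]}}]$, so this case also reduces to a bottomless claim without invoking induction.

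Otherwise, let $\cal V$ be the first \fsds\ of $x$ that is not in the $\cal U$-family, and let $x'$ be the suffix of $x$ starting at $\cal V$. Then $x'$ begins with the \fsds\ $\cal V$ and $|x'|<|x|$, so by the induction hypothesis $\delta(x')\leq \tfrac{5}{6}|x'|\-\tfrac{1}{3}|v|$. The corresponding ``bottom'' claim, namely Claim~\ref{bottom-alpha}, \ref{bottom-beta}, or \ref{bottom-gamma} depending on the type of the $\cal U$-family, transports this bound from $x'$ to $x$ and yields $\delta(x)\leq \tfrac{5}{6}|x|\-\tfrac{1}{3}|u|$, completing the induction.

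The genuine combinatorial obstacle here is that the naive ``per \fsds'' induction does not close: as observed just after Lemma~\ref{GapTail}, an \amate\ contributes a gap-and-tail budget of only $\tfrac{5}{6}$ against a cost of $1$, and a \bmate\ with $\send{v_{[1]}}<\send{u_{[1]}}$ is not covered by Lemma~\ref{GapTail} at all. That obstacle has already been overcome, however, by packaging all $\alpha$-, $\beta$- and $\gamma$-mates of $\cal U$ into a single family whose aggregate size is bounded explicitly (at most $|u_1|$, roughly $\tfrac{1}{2}(\p{U}\-\q{U})|u_1|$, and $\tfrac{2}{3}(\p{U}\+1)|u_1|$ for the three family types), and by then pitting that size against the much larger gap/tail created by the first \fsds\ lying strictly outside the family, as computed in Claims~\ref{bottom-alpha}--\ref{bottom-gamma} via Lemmas~\ref{gamma}, \ref{supere} and \ref{GapTail}. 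With these Claims in hand, the proof of the theorem itself is essentially a case-split plus the integrality argument sketched above.
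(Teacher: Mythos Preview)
Your proposal is correct and follows essentially the same route as the paper: prove the stronger inequality $\delta(x)\le\frac{5}{6}|x|-\frac{1}{3}|u|$ by induction, case-splitting on whether the $\cal U$-family is an \afam, an \bfam, or an \gfam, and in each case invoking the corresponding ``bottomless'' claim when nothing lies beyond the family and the ``bottom'' claim otherwise. Your handling of the reduction to strings that start with an \fsds\ (truncate once at the end, rather than peel one symbol at a time inside the induction as the paper does) is a harmless organizational variant.

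One point of phrasing deserves tightening. In the \bfam\ case you write ``let $\cal V$ be the \emph{first} \fsds\ not in the $\cal U$-family'' and then invoke Claim~\ref{bottom-beta}. But the hypothesis of Claim~\ref{bottom-beta} is stated for \emph{every} $\cal V$ outside the family, and its proof may in fact apply the bound to the first \eemate\ of $\cal U$ rather than to the first non-member (the intervening non-super $\varepsilon$-mates being absorbed via Claim~\ref{bottomless-beta}). This is not a gap in your argument, since you are using strong induction and hence the hypothesis is available for every shorter suffix that starts with an \fsds; but your sentence ``transports this bound from $x'$ to $x$'' suggests only the single suffix at the first $\cal V$ is used, which understates what Claim~\ref{bottom-beta} actually needs. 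A one-line remark that the strong induction hypothesis supplies the premise of Claim~\ref{bottom-beta} for all such $\cal V$ would make the invocation airtight.
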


\begin{proof}
We prove by induction the following, a slightly stronger, statement: $\delta(x)\leq \frac{5}{6}|x|\-\frac{1}{3}|u|$ for $|x|\geq 10$ where 
$u$ is the generator of the shorter square of the first \fsds\ of $x$.
We do not have to consider strings of length 9 or less, as such strings
do not contain \fsds{s}. Since a string of length 10 contains at most one
\fsds\ (see the note after Definition~\ref{notat}), the statement is true for strings of size 10. Assuming
the statement is true for all $|x|\leq n$, we shall prove it holds for all $|x|\leq n+1$.\\

\noindent
If  $x=x[1..n\+1]$ does not start with an \fsds, then $\delta(x)=\delta(x[2..n\+1])\leq \frac{5}{6}|x[2..n\+1]|\-
\frac{1}{3}|u|\leq
 \frac{5}{6}|x[1..n\+1]|\-\frac{1}{3}|u|$. Thus, we can assume that $x$ starts with an {\fsds} $\cal U$.
If $\cal U$ is the only \fsds\ of $x$, then $|x|\geq 2|u|$, thus the statement is obviously true. Therefore, we can assume that 
$x$ starts with a {\fsds} $\cal U$ and $\delta(x)\geq 2$.\\\\
Case $(a)$ assume that $x$ starts with an \afam\ of $\cal U$.\\
If there is no further \fsds\ in $x$, by Claim~\ref{bottomless-alpha}, the assertion is true.
Otherwise, we carry out the induction step by Claim~\ref{bottom-alpha}.\\
Case $(b)$ assume that $x$ starts with an \bfam\ of $\cal U$.\\
If there is no further \fsds\ in $x$, by Claim~\ref{bottomless-beta}, the assertion is
true. Otherwise, we carry out the induction step by Claim~\ref{bottom-beta}.\\
Case $(c)$ assume that $x$ starts with an \gfam\ of $\cal U$.\\
If there is no further \fsds\ in $x$, by Claim~\ref{bottomless-gamma}, the assertion is
true. Otherwise, we carry out the induction step by Claim~\ref{bottom-gamma}.
\end{proof}

\begin{corollary}
The number of distinct squares in a string of length $n$ is bounded by $\lfloor 11n/6 \rfloor$.
\end{corollary}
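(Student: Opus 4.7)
The plan is to derive this corollary directly from the preceding theorem bounding $\delta(x)$ together with \FrSi's Theorem~\ref{2n}. The key observation is that each distinct square has a unique rightmost occurrence, so counting distinct squares amounts to counting rightmost occurrences, and these are by Theorem~\ref{2n} concentrated with multiplicity at most two at any single starting position.

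Concretely, for a string $x$ of length $n$ I would partition the $n$ positions according to the number of rightmost occurrences of squares that begin there. Let $P_1$ denote the number of positions supporting exactly one rightmost square, and $P_2$ the number of positions supporting exactly two. By Theorem~\ref{2n} no position supports three or more, so $P_1 + P_2 \leq n$. Whenever two rightmost squares $u^2$ and $U^2$ with $|u|<|U|$ start at the same position, the pair $(u,U)$ is by definition a \fsds\ of $x$; conversely the two squares of any \fsds\ are rightmost in $x$. Hence $P_2 = \delta(x)$.

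Counting distinct squares position by position now gives
\[
s(x) \;=\; P_1 + 2P_2 \;=\; (P_1 + P_2) + P_2 \;\leq\; n + \delta(x).
\]
Applying the theorem just established, $\delta(x) \leq \lfloor 5n/6\rfloor$, so $s(x) \leq n + \lfloor 5n/6\rfloor = \lfloor 11n/6 \rfloor$, using that $n$ is an integer to absorb it into the floor. There is no real obstacle here: all of the structural combinatorics and case analysis has already been spent in establishing the bound on $\delta(x)$, and the corollary reduces to this three-line counting argument combined with \FrSi's ``at most two rightmost squares per position'' result.
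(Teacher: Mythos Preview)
Your proposal is correct and takes essentially the same approach as the paper: both combine Theorem~\ref{2n} (at most two rightmost squares per position) with the bound $\delta(x)\leq\lfloor 5n/6\rfloor$ to get the total count, via the identity $2P_2 + P_1 = (P_1+P_2) + P_2 \leq n + \delta(x)$. Your version is in fact more carefully written than the paper's, which compresses the same reasoning into the single expression $\lfloor(2\cdot 5/6 + 1/6)n\rfloor$.
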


\begin{proof}
The number of distinct squares in a string is the sum of the number of  \fsds{s} plus the number of single rightmost squares.
Since, for a string of length $n$,  the number of \fsds{s} is bounded by $\lfloor 5n/6 \rfloor$, the number of distinct squares is bounded 
by $\lfloor (2\cdot 5/6\+1/6)n \rfloor$; that is, by  $\lfloor 11n/6 \rfloor$.
\end{proof}

\section{Proofs}
\label{hard-proofs}

\subsection{Proof of Lemma~\ref{invfactor}:}
\label{proof-invfactor}

\smallskip
\noindent
Assume, in order to derive a contradiction, that an \invf\ $\inv{v}$ 
occurs to the left of $L_1$.
Consider the \invf\ $\inv{w}$ starting at the position $L_1$. Then $w_1$, $w_2$ and $\o{w}_2$ are 
left cyclic shifts of, respective $u_1$,  $u_2$ and $\o{u_2}$. Since $\inv{w}$ cannot be further cyclically shifted to 
the left, $lcs(w_2\o{w}_2,\o{w}_2w_2)=0$. Since $\inv{v}$ is occurring to the left of $\inv{w}$,
there are non-empty strings $a$ and $c$ and a string $b$ so that
$|a|>|b|$ and $a\inv{w}=b\inv{v}c$.
We split the argument into several cases depending on where the
\invf\ $\inv{v}$ ends.

\begin{my_enumerate}
\leftskip=-7pt
%Case 1
\item Case when $\inv{v}$ ends in the second copy of $\o{w}_2$ in the 
\invf\ $\inv{w}$:

\includegraphics[scale=0.7]{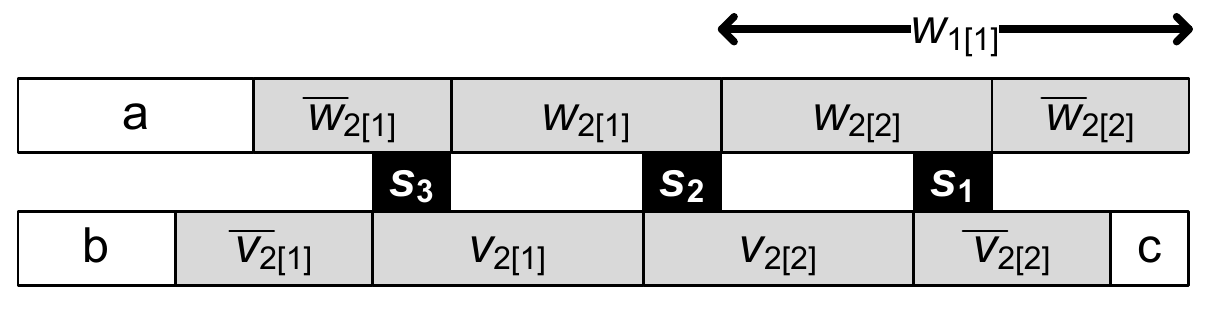}

\noindent Let $s_1$ be the overlap of $w_{2[2]}$ and 
$\o{v}_{2[2]}$.
Then $s_1$ is  a non-trivial proper prefix of $\o{v}_2$ and a non-trivial proper suffix of $w_2$.
There is a copy $s_2$ of $s_1$ as a suffix of $w_{2[1]}$, 
and it must be a
prefix of $v_{2[2]}$ as $|w_2|=|v_2|$. Consequently,
there is a copy $s_3$ of $s_2$ as a prefix of $v_{2[1]}$, and
it must be a suffix of $\o{w}_{2[1]}$ as $|s_3|=|s_1|\leq |\o{w}_2|$ and
$|v_{2[1]}|\+|v_{2[2]}|\+|\o{v}_{2[2]}|=$
$|w_{2[1]}|\+|w_{2[2]}|\+|\o{w}_{2[2]}|$.
Thus, $s_1$ is a suffix of both $w_2$ and of $\o{w}_2$, 
 contradicting the fact that $lcs(w_2\o{w}_2,\o{w}_2w_2)=0$.

%Case 2
\item  Case when $\inv{v}$ ends in the second copy of $w_2$ of $\inv{w}$:

\includegraphics[scale=0.7]{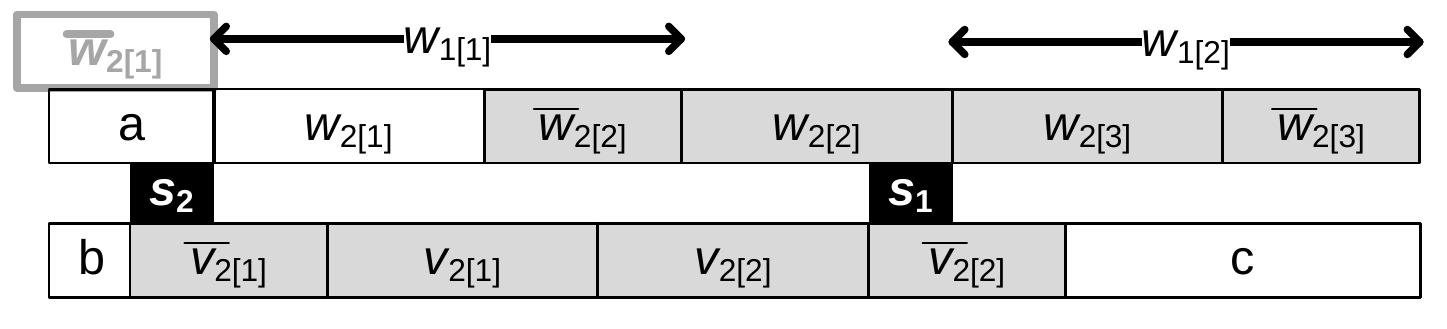}

\noindent  Let $s_1$ be the overlap of $w_{2[2]}$ and 
$\o{v}_{2[2]}$. Then $s_1$ is  suffix of $w_2$ and a prefix of $\o{v}_2$. There is a copy $s_2$ of $s_1$ as a prefix of 
$\o{v}_{2[1]}$. Consequently, $s_2$ must be a suffix
of $\o{w}_{2[1]}$ since $|\o{v}_{2[1]}|\+|v_{2[1]}|\+|v_{2[2]}|=|w_{2[1]}|\+|\o{w}_{2[2]}|\+|w_{2[2]}|$.
Thus, $s_1$ is a suffix of both $w_2$ and $\o{w}_2$, 
contradicting the fact that $lcs(w_2\o{w}_2,\o{w}_2w_2)=0$.\\
\textit{Note that the whole of $\o{w}_{2[1]}$ might not be a part of the string (and that is why in the diagram it is depicted in gray), in which case $a$ is a non-trivial proper suffix of $\o{w}_{2[1]}$, and the argument holds.}

%Case 3
\item Case when $\inv{v}$ ends in the first copy of $w_2$ of $\inv{w}$:

\includegraphics[scale=0.7]{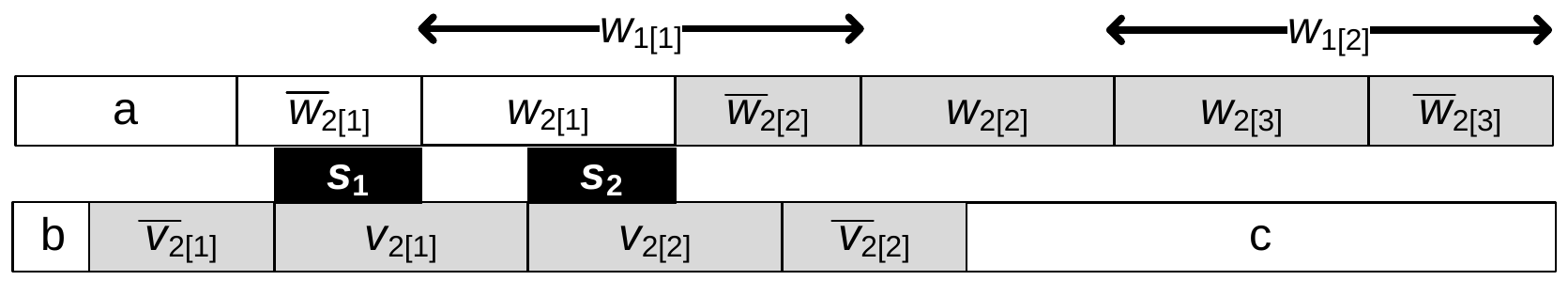}

\noindent Let $s_1$ be the overlap of $\o{w}_{2[1]}$ and $v_{2[1]}$. Then $s_1$ is a suffix of
$\o{w}_2$ and a prefix of $v_2$. There is a copy $s_2$ of $s_1$ as a prefix of $v_{2[2]}$.
It must be a suffix of $w_{2[1]}$ since $|\o{v}_{2[1]}|\+|v_{2[1]}|=|\o{w}_{2[1]}|\+|w_{2[1]}|$.
Thus, $s_1$ is a suffix of both $w_2$ and $\o{w}_2$, contradicting the fact that $lcs(w_2\o{w}_2,\o{w}_2w_2)=0$.

%Case 4
\item Case when $\inv{v}$ ends in the first copy of $\o{w}_2$ of $w_2\o{w}_2,\o{w}_2w_2$, or lies completely outside of $w_2w_2\o{w}_2$ of $\inv{w}$ and ends in $\o{w}_2$:

\includegraphics[scale=0.7]{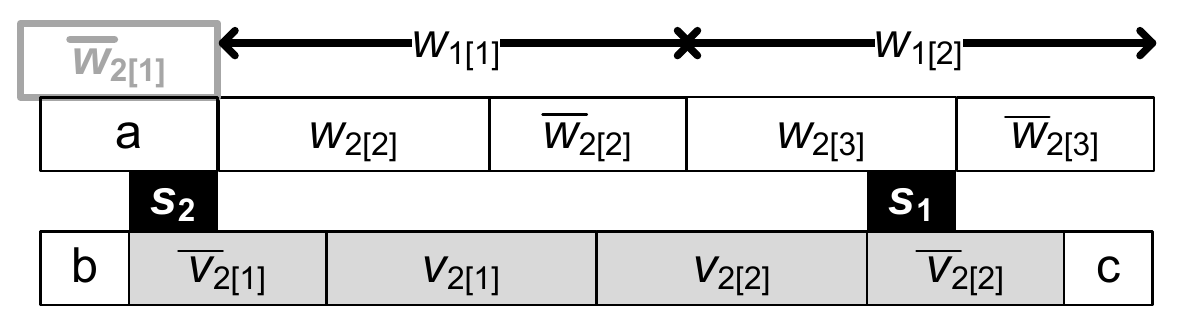}

\noindent Let $s_1$ be the overlap of $w_{2[3]}$ and $\o{v}_{2[2]}$. Then $s_1$ is a suffix of $w_2$ and a prefix of $\o{v}_2$.
There is a copy $s_2$ of $s_1$ as a prefix of $\o{v}_{2[1]}$.
It must be a suffix of $\o{w}_{2[1]}$ as
$|w_{2[2]}|\+|\o{w}_{2[2]}|\+|w_{2[3]}|=
|\o{v}_{2[1]}|\+|v_{2[1]}|\+|v_{2[2]}|$. Thus, $s_1$ is a suffix of both $w_2$ and $\o{w}_2$, contradicting the fact that
$lcs(w_2\o{w}_2,\o{w}_2w_2)=0$.\\
\textit{Note that the whole of $\o{w}_{2[1]}$ might not be a part of the string (and that is why in the diagram it is depicted in gray), in which case $a$ is a non-trivial proper suffix of $\o{w}_{2[1]}$, and the argument holds.}

%Case 5
\item Case when $\inv{v}$ lies completely outside of $\inv{w}$ and ends in $w_2$:

\includegraphics[scale=0.7]{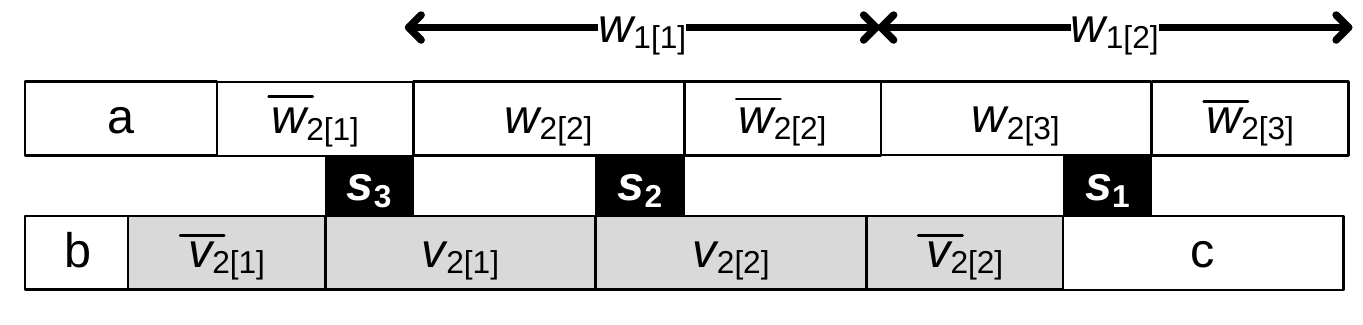}

\noindent Let $s_1$ be the offset of $a\inv{w}$ and $b\inv{v}$,
i.e. $a\inv{w}=b\inv{v}s_1$. Then $s_1$ is a suffix of $w_2$.
There is a copy $s_2$ of $s_1$ as a suffix of $\o{w}_{2[2]}$.
It must be a prefix of $\o{v}_{2[2]}$ as
$|\o{w}_{2[2]}|\+|w_{2[3]}|=|v_{2[2]}|\+|\o{v}_{2[2]}|$. 
There is a copy $s_3$ of $s_2$ as a prefix of $v_{2[1]}$. It must be
a suffix of $\o{w}_{2[1]}$ as
$|w_{2[2]}|\+|\o{w}_{2[2]}|\+|w_{2[3]}|=|v_{2[1]}|\+|v_{2[2]}|\+|\o{v}_{2[2]}|$.
Thus, $s_1$ is a suffix of both $w_2$ and $\o{w}_2$, contradicting the fact that $lcs(w_2\o{w}_2,\o{w}_2w_2)=0$.
\end{my_enumerate}

As a second step of the proof, let us investigate whether an \invf\ 
$\inv{v}$ can occur to the right of $R_1$ while ending
before $L_2$. The proof of this step is essentially the same argumentation
as for the first one, so though added for the sake of completion, it is
presented in an abbreviated form, i.e. we just present the diagrams and 
the conclusions.

Consider the \invf\ $\inv{w}$ starting at the position $R_1$. Then $w_1$ respective $w_2, \o{w}_2$ are  
right cyclic shifts of $u_1$ respective $u_2, \o{u}_2$. Moreover,
$lcp(w_2\o{w}_2,\o{w}_2w_2)=0$ as $\inv{w}$ cannot be shifted
right. Since $\inv{v}$ is occurring to the right of $\inv{w}$,
there are non-empty strings $b$ and $c$ and a string $a$ so that
$|a|<|b|$ and $a\inv{w}c=b\inv{v}$.
We split the argument into several cases depending on where the
\invf\ $\inv{v}$ starts.

\begin{my_enumerate}
\leftskip=-7pt
%Case 1
\item Case when $\inv{v}$ starts in the first copy of $\o{w}_2$ in the 
\invf\ $\inv{w}$:

\includegraphics[scale=0.7]{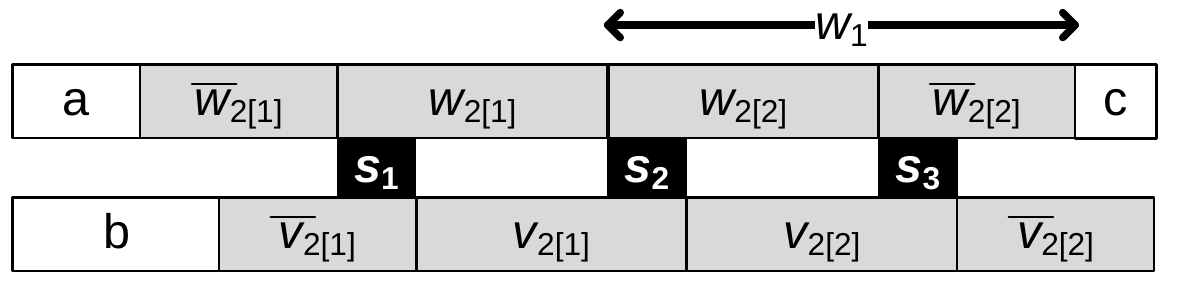}

\noindent Then $s_1$ is both a prefix of $w_2$ and $\o{w}_2$, 
 contradicting the fact that $lcp(w_2\o{w}_2,\o{w}_2w_2)=0$.

%case 2
\item  Case when $\inv{v}$ starts in the first copy of $w_2$ in the 
\invf\ $\inv{w}$:

\includegraphics[scale=0.7]{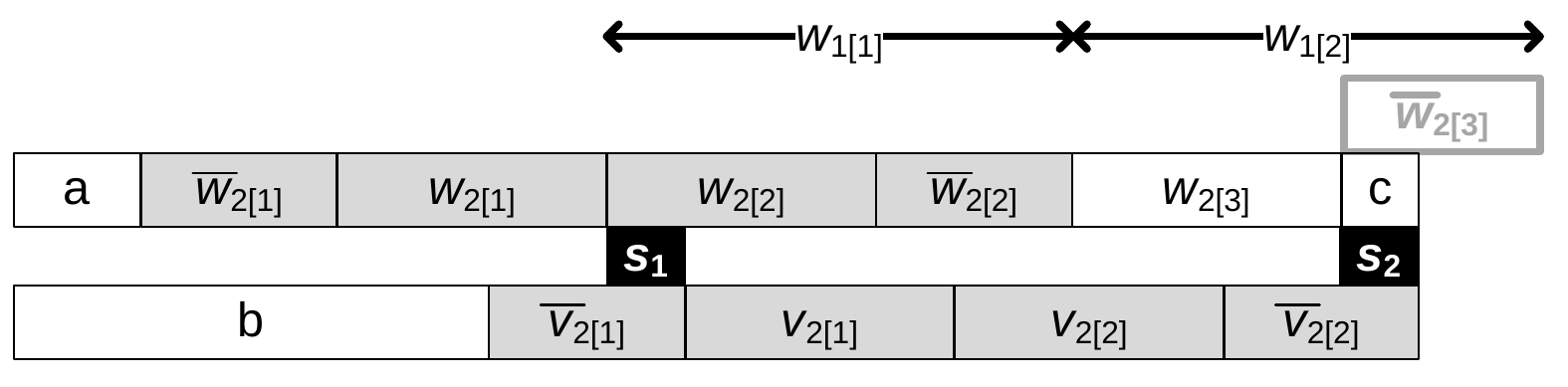}

\noindent  Then $s_1$ is both a prefix of $w_2$ and $\o{w}_2$, 
 contradicting the fact that $lcp(w_2\o{w}_2,\o{w}_2w_2)=0$.

%Case 3
\item Case when $\inv{v}$ starts in the $w_2$ of $w_1$:

\noindent \textit{Note that this covers also the case when 
$\inv{v}$ starts in the second copy of $w_2$ in $\inv{w}$.}

\includegraphics[scale=0.7]{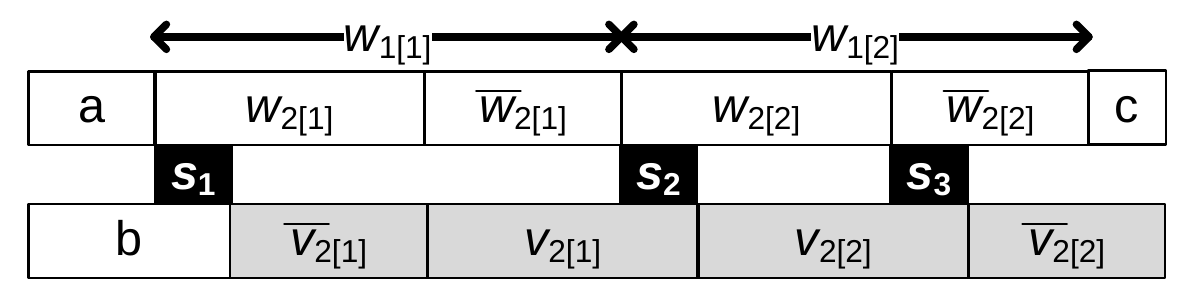}

\noindent  Then $s_1$ is both a prefix of $w_2$ and $\o{w}_2$, 
 contradicting the fact that $lcp(w_2\o{w}_2,\o{w}_2w_2)=0$.\\

\item Case when $\inv{v}$ starts in the $\o{w}_2$ of $w_1$:

\textit{Note that this covers also the case when 
$\inv{v}$ starts in the second copy of $\o{w}_2$ in $\inv{w}$.}

%Case 4
\includegraphics[scale=0.7]{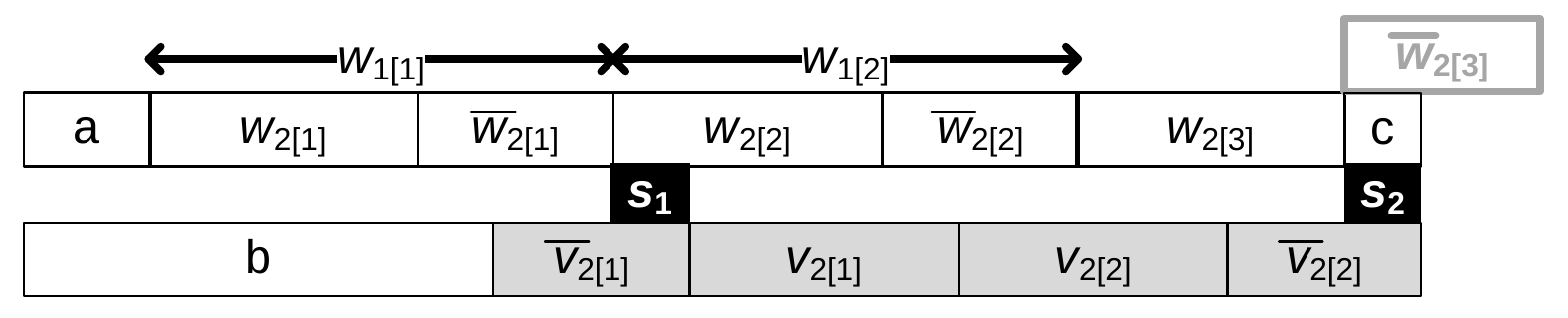}

\noindent  Then $s_1$ is both a prefix of $w_2$ and $\o{w}_2$, 
 contradicting the fact that $lcp(w_2\o{w}_2,\o{w}_2w_2)=0$.\\
\textit{Note that the whole of $\o{w}_{2[3]}$ might not be a part of the string (and that is why in the diagram it is depicted in gray), but then $t$ is a non-trivial proper prefix of $\o{w}_2$ and the 
the argument holds.}

%Case 5
\item case when $\inv{v}$ does not at all overlap with $\inv{w}$.

\noindent
 That case is argued identically as for an \invf\ occurring to the left
of $L_2$.
\end{my_enumerate}
The third step of the proof is to assume by contradiction
that an \invf\ occurs to the left of $L_2$ which follows the same
line of argumentation as the first step.
The fourth and last step of the proof is to assume that an \invf\ occurs
to the right of $R_2$ which follows the same line of argumentation
as for the second step. \qed

%%%%%%%%%%%%%%%%%%%%%%%%%%%%%%%
\subsection{Proof of Lemma~\ref{v-cases}:}
\label{proof-v-cases}

\smallskip
\leftskip=0pt
\noindent(a) Case $\sbig{v_{[1]}}<R_1$.

\smallskip
\leftskip=10pt
\noindent
Without loss of generality we can assume that \lcp{u} = 0
and hence $R_1=N_1$. If it is not, instead of doing the argument with\newline ${u_1}^\p{U}u_2{u_1}^{(\p{U}\+\q{U})}u_2{u_1}^\q{U}$ we can do the
argument with\newline
$s_2{w_1}^\p{U}w_2{w_1}^{(\p{U}\+\q{U})}w_2{w_1}^{(\q{U}\-1)}s_1$ 
where $w_1$ respective $w_2$ is a right cyclic shift of $w_1$ respective 
$w_2$ by \lcp{u} positions, $s_1s_2=w_1$, and
$|s_1|=$ \lcp{u}. Then $lcp(w_1,\c{w}_1)=0$.
The proof is carried out by a discussion of all possible 
cases of the ending point of $v_{[1]}$. 

\smallskip
\noindent(A) Case $\send{v_{[1]}}\leq \send{u_{[1]}}$

\smallskip
\leftskip=20pt
\noindent 
Note that $\send{v^2}>\send{U_{[1]}}=
\send{{u_1}^\p{U}u_2{u_1}^\q{U}}$, for otherwise there would be a farther copy of $v^2$ in $U_{[2]}$.
By the inversion factor Lemma~\ref{invfactor}, 
 $v_{[1]}$ does not contain the whole of any \invf{s}. Thus,  $v_{[2]}$ cannot contain either the whole of  any \invf{s}, and in particular 
cannot contain the \invf\ at $N_1$. Therefore, $v_{[1]}$ must end in the suffix $\o{u}_2u_2$ of $u_{[1]}$.
Let $s$ be the offset of $v_{[1]}$ in $u_{[1]}$ and let $s_1$ be the overlap between $u_{[1]}$ and $v_{[2]}$, i.e. $svs_1=u={u_1}^\p{U}u_2$, see the diagram bellow for an illustration.

\includegraphics[scale=.8]{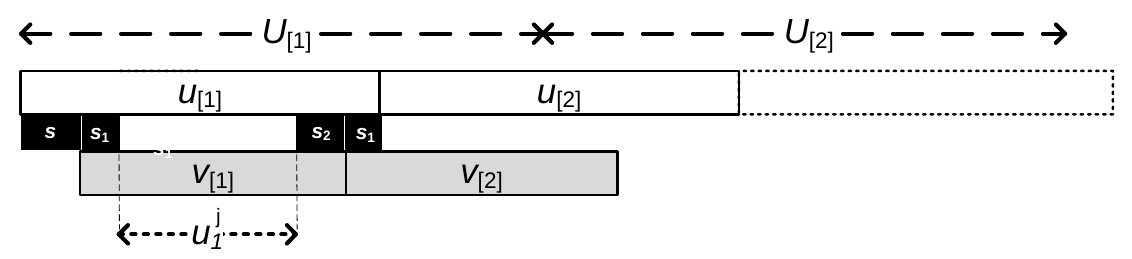}

\noindent
Then  $s_1$ is both a prefix of $v$ and a suffix of $u$. Since $s_1$ is the overlap of $u_1$ and $v_1$, $|s_1|<|u_1|$ and $s_1$ is a suffix of $\o{u}_2u_2$.  It follows that $v=t_1{u_1}^it_2$ for some 
suffix $t_1$ of $u_1$, some prefix $t_2$ of $u_1$, and some $i\geq 0$.\\
On the other hand,  $U_{[1]}={u_1}^\p{U}u_2{u_1}^\q{U}=
u{u_1}^\q{U}$ is
a non-trivial proper prefix of $sv^2$, and so $svs_1{u_1}^\q{U}$ is a non-trivial proper
prefix of $sv^2$, implying that $s_1{u_1}^\q{U}$ is a non-trivial
proper prefix
of $v$ and, therefore, $v=s_1u_i^js_2$ for some prefix $s_2$ of
$u_1$ and some $j\geq 1$.\\
Thus, $v=t_1{u_1}^it_2=s_1{u_1}^js_2$. Since $t_1$ is a suffix of $u_1$
and $t_2$ a prefix of $u_1$, by Lemma~\ref{syncpr},
$t_1=s_1$ and $t_2=s_2$. Therefore, $s_1$ is a suffix of $u_1$. \\
Since $s_2s_1$ is a suffix of $u$, then $s_2s_1={u_1}^iu_2$ for some $i\geq 0$. Since $|s_2|\+|s_1|<2|u_1|$, either $i=0$ or $i=1$, which
proves that either $s_2s_1=u_1u_2$ or $s_2s_1=u_2$.\\
In the former case, $|v|=(j\+1)|u_1|\+|u_2|$ and so
$v={{\widehat u}_1}^{(j\+1)}{\widehat u}_2$,
while in the latter case $v={{\widehat u}_1}^{\ j}{\widehat u}_2$, where 
in both cases
${\widehat u}_1$ respective
${\widehat u}_2$ is a left cyclic shift of $u_1$ respective $u_2$
by $|s_1|$ positions.
The left cyclic shift is possible as $s_1$ is both a suffix of $u_1$ and a suffix of $\c{u}_1=\o{u}_2u_2$. Therefore,
$v={{\widehat u}_1}\np{j}{{\widehat u}_2}$ and $1\leq j\leq \p{U}$ and so when $j<\p{U}$, case $(a_1)$ holds true, and when $j=\p{U}$, case $(a_2)$ holds true.

\leftskip=10pt
\smallskip
\noindent (B) Case $\send{u_{[1]}} < \send{v_{[1]}}\leq \send{u_{[1]}u_1}$

\leftskip=20pt
\smallskip
\noindent 
We discuss this case in four different configurations based on
where $v_{[1]}$ starts and where it ends.
\begin{my_itemize}
\leftskip=10pt
\item[$(1)$] A configuration when $v_{[1]}$ starts in a $u_2$ and ends in the first $u_2$ of $u_{[2]}$.\\
Let $s_1$ be the offset of $v_{[1]}$ in the $u_2$ it starts in, let $s_2$ be the
overlap of $v_{[1]}$ and  the $u_2$ it starts in, let $t_1$ be the overlap
of $v_{[1]}$ with the $u_2$ it ends in, and let $t_2$ be the overlap
of $v_{[2]}$ with the $u_2$ where $v_{[1]}$ ends. Let
${{\widehat u}_1}=s_2\o{u}_2s_1$; as a conjugate of $u_1$,
it is primitive.

\includegraphics[scale=0.75]{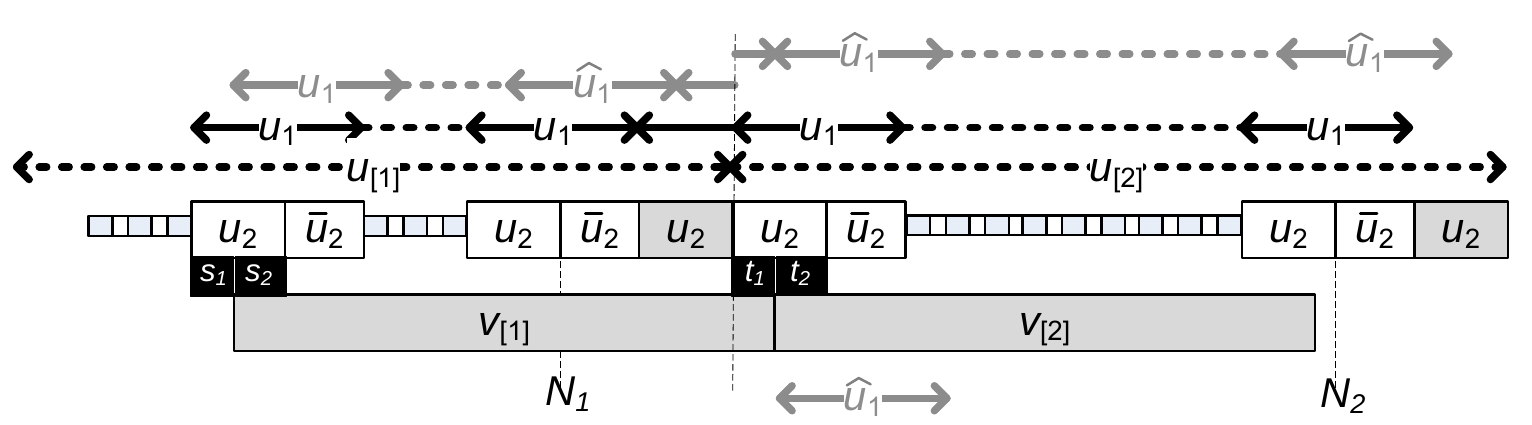}

\noindent
By Lemma~\ref{syncpr}, $t_1=s_1$ and $t_2=s_2$, and
so $s_1v_{[2]}$ is a non-trivial proper prefix of ${u_1}^{(\p{U}\+\q{U})}u_2$.
It follows that the suffix $u_2s_1$ of $v$ must align with
$u_1u_2=(u_2\o{u}_2)u_2$ of ${u_1}^{(\p{U}\+\q{U})}u_2$, and so 
$s_1$ is prefix of $u_2\o{u}_2$. Thus, $s_1$ is a prefix of both, $u_2\o{u}_2$ and $\o{u}_2u_2$. Therefore,
$|s_1|\leq$ \lcp{u} = 0, and so $s_1$ is empty.
It follows that $v=u_1^ju_2$ for $1\leq j\leq \p{U}$ and so either
$(a_1)$ or $(a_2)$ holds true.

\textit{Note that Lemma~\ref{syncpr} applies even if $\p{U}=1$, since
then $v_{[1]}$ must start in the very first $u_2$ of $u_{[1]}$}.

\item[$(2)$] A configuration when $v_{[1]}$ starts in a $u_2$ and ends in the first $\o{u}_2$ of $u_{[2]}$.\\
Let $s_1$ and $s_2$ be as in the previous case (B)(1).  Let $t_1$
be the overlap of $v_{[1]}$ and $u_{[2]}$.

\includegraphics[scale=0.75]{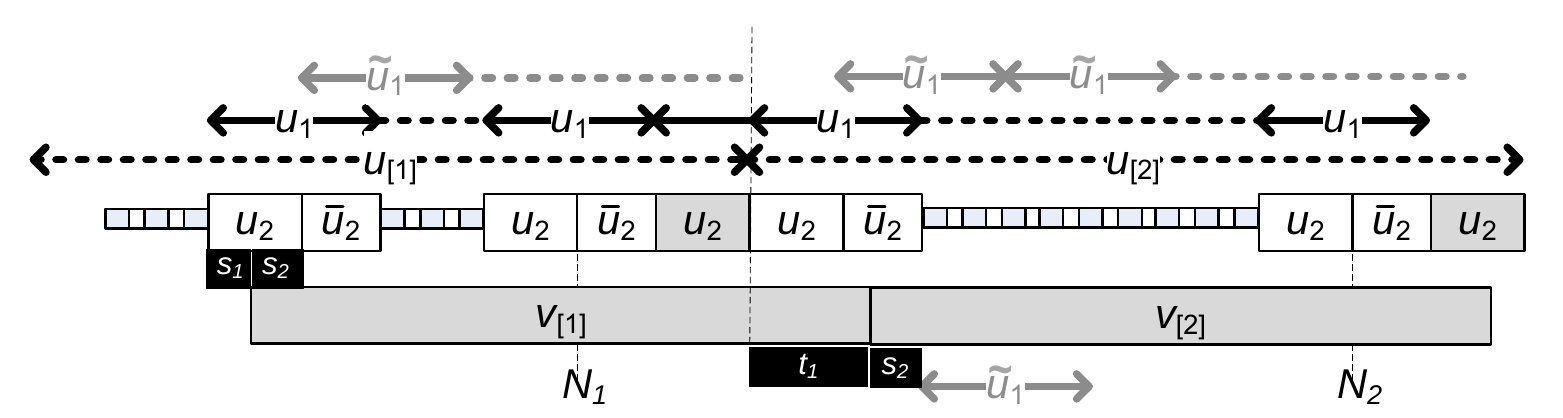}

\noindent The factor ${u_1}^{(\p{U}|+\q{U})}u_2$ has $u_2\c{u}_1\c{u}_1$
as a prefix as $\p{U}\+\q{U}\geq 2$.
The factor $v$ has $s_2\c{u}_1$ as
a prefix. Thus ${u_1}^{(\p{U}|+\q{U})}u_2$ has also 
$t_1s_2\c{u}_1$ as a
prefix. Since $|t_1s_2|<|u_2|\+|u_1|$, this contradicts Lemma~\ref{syncpr}, as $\c{u}_1$ is primitive being a conjugate of $u_1$.
Such a configuration is not possible.

\item[$(3)$]  A configuration when $v_{[1]}$ starts in a $\o{u}_2$ and ends in the first $u_2$ of $u_{[2]}$.\\
Let $s_1$ be the offset of $v_{[1]}$ in $\o{u}_2$ it starts in, let 
$s_2$ be the overlap of $v_{[1]}$ and  the $\o{u}_2$ it starts in.
Let $t_1$ be the overlap of $v_{[1]}$ with $u_{[2]}$

\includegraphics[scale=0.75]{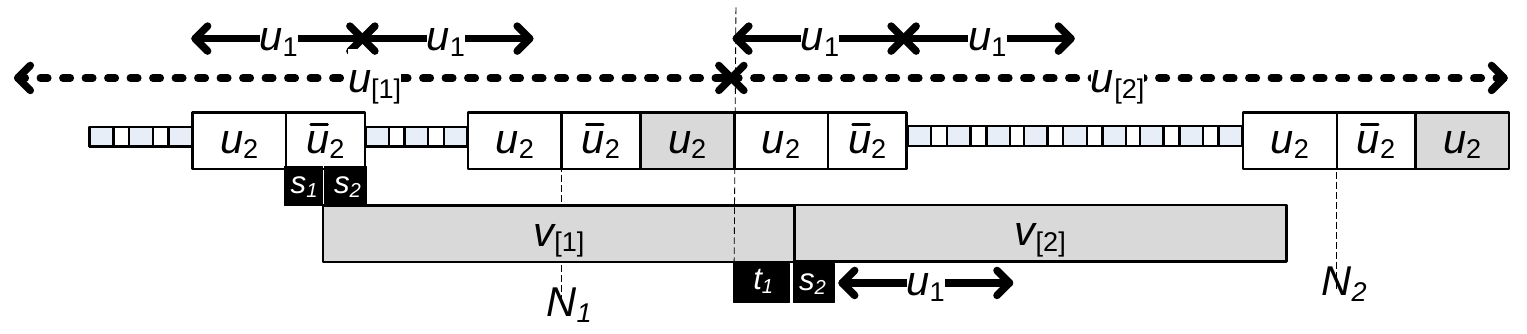}

\noindent The factor $v$ has $s_2u_1$ as a prefix, and
so ${u_1}^{(\p{U}\+\q{U})}$ has as a prefix
$u_1u_1$ and $t_1s_2u_1$. Since $|t_1s_2|<|u_1|$, this
contradicts Lemma~\ref{syncpr}. Such a configuration is not  possible.

\item[$(4)$] A configuration when $v_{[1]}$ starts in a $\o{u}_2$ and ends in the first $\o{u}_2$ of $u_{[2]}$.\\
Let $s_1$ and $s_2$ be as in (B)(3). Let $t_1$ be the overlap of $v_{[1]}$
and the $\o{u}_2$ it ends in, and let $t_2$ be the overlap of
$v_{[2]}$ with the $\o{u}_2$ in which $v_{[1]}$ ends.

\includegraphics[scale=0.75]{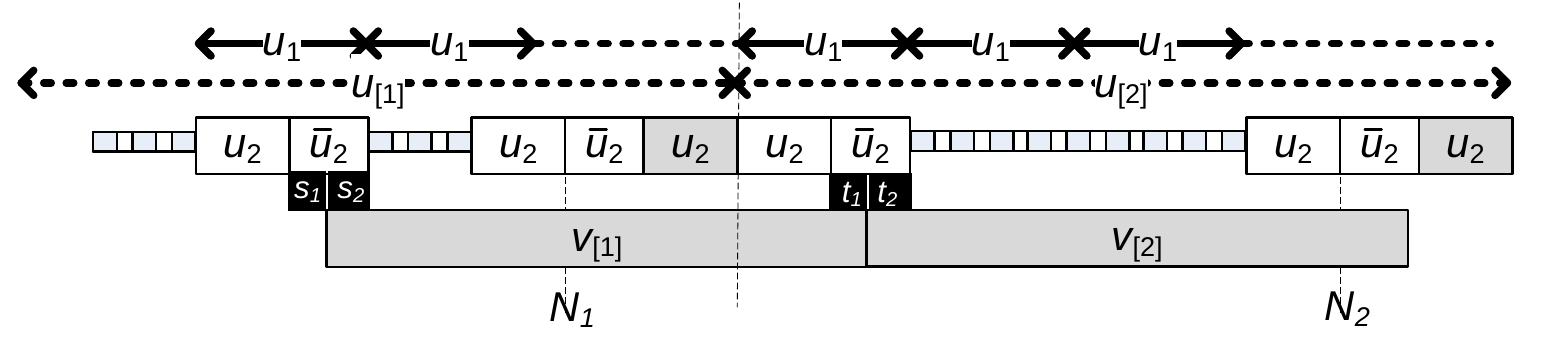}

\noindent  By Lemma~\ref{syncpr}, $t_1=s_1$ and $t_2=s_2$. Since $u_2s_1v_{[2]}$
is a prefix of ${u_1}^{(\p{U}\+\q{U})}u_2$, it follows that the suffix
$u_2u_2s_1$ of $v_{[2]}$ must align with $u_1u_2$ in ${u_1}^{(\p{U}\+\q{U})}u_2$, 
and thus
$u_2u_2s_1$ is a prefix of $u_2\o{u}_2u_2$, hence  $u_2s_1$ is a prefix of $\o{u}_2u_2$. Thus,
$u_2\o{u}_2=u_2s_1s_2$ is a prefix of $\o{u}_2u_2s_2$, giving
$u_2\o{u}_2=\o{u}_2u_2$, which is a contradiction as
$u_2\o{u}_2$ is primitive. Such a configuration is not  possible.
\end{my_itemize}

\leftskip=10pt
\noindent (C) Case $\send{u_{[1]}u_1} < \send{v_{[1]}}<R_2$.

\leftskip=20pt
\smallskip
\noindent Then $v_{[1]}$ contains the \invf\ at $R_1$. Thus, 
$v_{[2]}$ must contain the \invf\ at $R_2$ and it must be placed
in $v_{[2]}$ in the same position mate to the beginning of
$v_{[2]}$ as in $v_{[1]}$, and therefore 
$|v|=R_2\-R_1=|U|$. Thus, case $(a_4)$ holds
true.

\leftskip=10pt
\smallskip
\noindent (D) Case $R_2\leq \send{v_{[1]}}$.

\leftskip=20pt
\smallskip
\noindent
Since $\send{v_{[1]}}\geq R_2\geq N_2={u_1}^{\p{U}}u_2{u_1}^{(\p{U}\+\q{U}\-1)}{u_2}$,
either\newline
$s_1\o{u}_2u_2{u_1}^{(\p{U}\+\q{U}\-1)}{u_2}$
for some suffix $s_1$ of $u_2$ is a prefix of $v$,
or\newline
$s_1{u_1}^iu_2{u_1}^{(\p{U}\+\q{U}\-1)}{u_2}$
for some suffix $s_1$ of $u_1$ and some $i\geq 1$ is 
a prefix of $v$, 
 and so case $(a_5)$ holds true. 

\leftskip=10pt
\smallskip
\noindent
Case $(a_3)$ is not possible as it never materialized during the discussion of the cases\linebreak
(A) - (D) that cover exhaustively all possible endings of $v_{[1]}$.

\leftskip=0pt
\smallskip
\noindent(b) Case $\send{v_{[1]}}\leq \send{u_{[1]}}$.

\smallskip
\leftskip=10pt
\noindent
If $\sbig{v_{[1]}}\geq R_1$, then $|v|<|u_1|$ and so $v^2$ is a factor of $u_1u_2{u_1}$ and hence of $U_{[1]}$, and thus there is a farther
copy of $v^2$ in $U_{[2]}$, a contradiction. Therefore
$\sbig{v_{[1]}}<R_1$ and this is the case (A) above, and thus
the either the case $(a_1)$ or case $(a_2)$ holds.
\qed

\leftskip=0pt
%%%%%%%%%%%%%%%%%%%%%%%%%%%%%%%%%%%%%%%%%
\subsection{Proof of Lemma~\ref{vv-cases}:}
\label{proof-vV-cases}

\smallskip
\leftskip=0pt
\noindent Case $(a)$:
since $\sbig{{v^2}}=\sbig{{V^2}}=\sbig{{\cal V}}\leq R_1(\cal U)$,
applying Lemma~\ref{v-cases} to $v^2$ and $V^2$ gives the following possibilities:

\vspace{-5pt}
\begin{my_itemize}
\leftskip=3pt
\item[$(i)$] $v={{\widehat u}_1}\np{i}{\widehat u}_2$ for $1\leq i<\p{U}$ 
where ${\widehat u}_2$ is a non-trivial proper prefix of ${\widehat u}_1$ and where ${\widehat u}_1$ respective 
${\widehat u}_2$ is a cyclic shift of $u_1$ respective
$u_2$ in the same direction by the same number of positions
(by item $(a_1)$ of Lemma~\ref{v-cases} applied to $v^2$),
\item[$(ii)$]
$v={{\widehat u}_1}^{\p{U}}{\widehat u}_2$  
where ${\widehat u}_2$ is a non-trivial proper prefix of ${\widehat u}_1$ and where ${\widehat u}_1$ respective 
${\widehat u}_2$ is a cyclic shift of $u_1$ respective
$u_2$ in the same direction by the same number of positions
(by item $(a_2)$ of Lemma~\ref{v-cases} applied to $v^2$),
\item[$(iii)$] $|v|=|U|$  
(by item $(a_4)$ of Lemma~\ref{v-cases} applied to $v^2$),
\item[$(iv)$] $\send{v_{[1]}}\-\send{u_{[1]}}\geq(\p{U}\+\q{U}\-1)|u_1|\+|u_2|$
(by item $(a_5)$ of Lemma~\ref{v-cases} applied to $v^2$),
\item[$(I)$] $V={{\widehat u}_1}\np{j}{\widehat u}_2$ for $1\leq j<\p{U}$ 
where ${\widehat u}_2$ is a non-trivial proper prefix of ${\widehat u}_1$ and where ${\widehat u}_1$ respective 
${\widehat u}_2$ is a cyclic shift of $u_1$ respective
$u_2$ in the same direction by the same number of positions
(either by item $(a_1)$ or $(a_2)$ of Lemma~\ref{v-cases} applied to $V^2$),
\item[$(II)$] $|V|=|U|$  
(by item $(a_4)$ of Lemma~\ref{v-cases} applied to $V^2$),
\item[$(III)$] Either $s_1\o{u}_2u_2{u_1}^{(\p{U}\+\q{U}\-1)}{u_2}$
for some suffix $s_1$ of $u_2$ is a prefix of $V$,
or\linebreak
 $s_1{u_1}^iu_2{u_1}^{(\p{U}\+\q{U}\-1)}{u_2}$
for some suffix $s_1$ of $u_1$ and some $j\geq 1$ is 
a prefix of $V$
(by item $(a_5)$ of Lemma~\ref{v-cases} applied to $V^2$).
\end{my_itemize}

\noindent 
We inspect all possible combinations:
\begin{my_itemize}
\leftskip=-10pt
\item[$\cdot$] 
Combining $(i)$ and $(I)$ is impossible: since $v$ is a prefix of
$V$, ${{\widehat u}_1}={{\widehat u}_1}$
and ${{\widehat u}_2}={{\widehat u}_2}$.  Since $j>i$ as $|V|>|v|$, 
we can apply Lemma~\ref{canon1} deriving a contradiction.
\item[$\cdot$]
Combining $(i)$ and $(II)$ is possible and yields case $(a_2)$:  since $v$ is a prefix of $V$, ${{\widehat u}_1}={{\widehat u}_1}$
and ${{\widehat u}_2}={{\widehat u}_2}$  and so $\cal V$ must be a \bmate\ of $\cal U$. 
Since $|V|=|U|=(\p{U}\+\q{U})|u_1|\+|u_2|$, 
$V={\widehat u}_1^{\ i}{\widehat u}_2{\widehat u}_1^{(\q{U}\+\p{U}\-i)}$. Since $i\geq \q{U}\+\p{U}\-i$ as otherwise there would
be a farther copy of $v^2$,  $2i\geq\p{U}\+\q{U}$. Since
$1\leq i<\p{U}$, $i = \p{U}\-k$ for some $1\leq k < \p{U}$.
It follows that 
$2(\p{U}\-k)\geq\p{U}\+\q{U}$, so
$2\p{U}\-2k\geq  = \p{U}\+\q{U}$, and thus $\p{U}\geq \q{U}\+2$.
\item[$\cdot$]
Combining $(i)$ and $(III)$ is impossible: since $v^2$ is a prefix of $V^2$,
${{\widehat u}_1}\np{i}{{\widehat u}_2}{{\widehat u}_1}\np{i}{{\widehat u}_2}$ is a prefix of $V^2$. At the same time either $s_1{u_1}^j{u_2}{u_1}^{(\p{U}\+\q{U}\-1)}u_2$
is a prefix of $V$ or $s_1\o{u_2}{u_2}{u_1}^{(\p{U}\+\q{U}\-1)}u_2$ is a  prefix of $V$. Due to Lemma~\ref{syncpr}, in both cases, ${{\widehat u}_1}\np{i}{{\widehat u}_2}{{\widehat u}_1}^{(\p{U}\+\q{U}\-1)}{{\widehat u}_2}$ is a prefix of $V$
and so ${{\widehat u}_1}\np{i}{{\widehat u}_2}{{\widehat u}_1}\np{i}{{\widehat u}_2}$ is a prefix of $V$. It follows that $v^2$ is a factor in $V_{[1]}$ and, consequently,  it has a farther copy in  $V_{[2]}$, a contradiction.
\item[$\cdot$]
Combining $(ii)$ and $(I)$ is impossible: as $j\leq \p{U}$ implies that $|V|\leq |v|$, hence a contradiction.
\item[$\cdot$]
Combining $(ii)$ and $(II)$ is possible and yields that  $\cal V$ is an \amate\ 
of $\cal U$, hence case $(a_1)$.
\item[$\cdot$]
Combining $(ii)$ and $(III)$ is impossible for the same reasons
as for the combination $(i)$ and $(III)$.
\item[$\cdot$]
Combining $(iii)$ and $(I)$ or $(II)$ is impossible due to the size of $v$ being bigger
than the size of $V$.
\item[$\cdot$] 
Combining $(iii)$ and $(III)$ is possible and yields case $(a_3)$ and
so $\cal V$ is a \gmate\ of $\cal U$.
\item[$\cdot$] 
Combining $(iv)$ and $(I)$ or $(II)$  is impossible due to the size of $v$ being bigger than the size of $V$.
\item[$\cdot$]
Combining $(iv)$ and $(III)$ yields case $(a_4)$.
\end{my_itemize}

\noindent Case $(b)$: The \fsds\  $\cal V$ is an \emate\ of $\cal U$
by definition as $R_1\leq \sbig{{\cal V}}$. If $\send{v_{[1]}}\leq \send{u_{[1]}}$, then by Lemma~\ref{v-cases}, $\sbig{{\cal V}}<R_1$, a contradiction.
So $\send{u_{[1]}}<\send{v_{[1]}}$.
\qed

%%%%%%%%%%%%%%%%%%%%%%%%%%%%%%%%%
\bigskip
\bigskip
\noindent
{\bf Acknowledgments.} 
The authors would like to thank  Nguyen Huong Lam and the anonymous referees for valuable comments and suggestions which improved the quality of the paper. 
This work was supported by grants from the Natural Sciences and Engineering Research Council of Canada, MITACS, and by
the Canada Research Chairs program.
%%%%%%%%%%%%%%%%%%%%%%%%%%%%%%%%%%%%%%%%%%%%

\bigskip
\noindent {\bf References}
\bibliographystyle{plain}
\bibliography{da3474}    

\end{document}